\definecolor{burgundy}{rgb}{0.5, 0.0, 0.13}
\newcommand{\N}{\ensuremath{\mathbb{N}}}
\newcommand{\Z}{\ensuremath{\mathbb{Z}}}
\newcommand{\R}{\ensuremath{\mathbb{R}}}
\newcommand{\C}{\ensuremath{\mathbb{C}}}
\newcommand{\PP}{\ensuremath{\mathbb{P}}}
\newcommand{\FF}{\ensuremath{\mathbb{F}}}
\renewcommand{\k}{\mathrm{k}}
\newcommand{\bk}{\bar{\mathrm{k}}}
\newcommand{\QQ}{\mathcal{Q}}
\DeclareMathOperator{\Aut}{Aut}
\DeclareMathOperator{\Bir}{Bir}
\newcommand{\Pic}{{\mathrm{Pic}}}
\newcommand{\rk}{{\mathrm{rk}}}
\DeclareMathOperator{\Id}{Id}
\DeclareMathOperator{\pr}{pr}
\newcommand\SB[1][\scalebox{0.6}]{#1}
\newcommand\SBa[1][\scalebox{0.4}]{#1}
\def\dashmapsto{\mapstochar\dashrightarrow}
\newcommand\dashmapsfrom{\mathrel{\reflectbox{\ensuremath{\dashmapsto}}}}
\newtheorem{Thm}{Theorem}[section]
\newtheorem*{Thm*}{Theorem}
\newtheorem{Cor}[Thm]{Corollary}
\newtheorem{Lem}[Thm]{Lemma}
\newtheorem{Prop}[Thm]{Proposition}
\theoremstyle{definition}
\newtheorem{Def}[Thm]{Definition}
\newtheorem{Rmk}[Thm]{Remark}
\title{Infinite algebraic subgroups of the real Cremona group}
\author{Maria Fernanda Robayo and Susanna Zimmermann}
\subjclass[2010]{14E07; 14L99; 14P99}
\address{Maria Fernanda Robayo\\
Basel, Switzerland}
\email{maferobayo@gmail.com}
\address{Susanna Zimmermann\\
Institut de Math\'ematiques de Toulouse\\
Universit\'e de Toulouse Paul Sabatier, France}
\email{susanna.zimmermann@math.univ-toulouse.fr}
\thanks{The first author gratefully acknowledges support from the Department of Mathematics and Computer Science, University of Basel. The second author gratefully acknowledges support by the Swiss National Science Foundation via grants PP00P2\_153026/1 and P2BSP2\_168743}
\begin{document}
\maketitle
\thispagestyle{empty}

\begin{abstract}
We give the classification of the maximal infinite algebraic subgroups of the real Cremona group of the plane up to conjugacy and present a parametrisation space of each conjugacy class. Moreover, we show that the real plane Cremona group is not generated by a countable union of its infinite algebraic subgroups.
\end{abstract}

\tableofcontents

\section{Introduction} 
It is most natural to study actions of algebraic groups on algebraic varieties. One tends to assume the action to be regular, but this is quite restrictive; any algebraic group acting regularly on the $n$-dimensional projective space $\PP^n$ is a subgroup of $\Aut_{\k}(\PP^n)=\mathrm{PGL}_{n+1}(\k)$. It is therefore interesting to study rational group actions on $\PP^n$, or, equivalently, algebraic subgroups of $\Bir_{\k}(\PP^n)$. Algebraic subgroups of the complex Cremona group have been studied by many mathematicians; we refer to \cite{B10} for a historical note, where also the complete classification of the maximal algebraic subgroups of the complex Cremona group $\Bir_{\C}(\PP^2)$ is presented. \par
In this paper, we give a classification of the maximal infinite algebraic subgroups of the real Cremona group $\Bir_{\R}(\PP^2)$. The classification is built on the classification of minimal real smooth projective surfaces given in \cite{Com12}, which is manageable, quite opposed to the case of a general perfect field. \par
The finite subgroups of the real Cremona group of odd order have already been classified in \cite{Yas16}; they are conjugate to a subgroup of the automorphism group of a real del Pezzo surface. It is also interesting to look at the group of birational transformations of a rational real smooth minimal model $X$ that are well defined on the set of real points. The group is usually called the group of birational diffeomorphisms of $X$ and has been studied for instance in \cite{BH07, RV05,HM09,KM09,BM14,R15}. The birational diffeomorphisms of the sphere of prime order have been classified in \cite{R15}. \par
In the classification of the infinite algebraic subgroups of $\Bir_{\R}(\PP^2)$, two of the infinite families in the classification of the complex algebraic subgroups of the Cremona group split into two families each: There exist four isomorphism classes of real del Pezzo surfaces of degree $6$, and the automorphism group of two of them are infinite maximal algebraic subgroups. The family of conic bundles splits into real conic bundles coming from a del Pezzo surface of degree $6$ obtained by blowing up the sphere in a pair of non-real conjugate points and into real conic bundles coming from Hirzebruch surfaces. In both cases, an infinite number of them have automorphism groups that are maximal infinite algebraic subgroups of $\Bir_{\R}(\PP^2)$, pairwise non-conjugate. More concretely, the classification is as follows. \par
By $D_6$ we denote the dihedral group with twelve elements.

\begin{Thm}\label{thm:classification}
Every infinite algebraic subgroup of $\Bir_{\R}(\PP^2)$ is contained in a maximal algebraic subgroup. \par
An infinite maximal algebraic subgroup of $\Bir_{\R}(\PP^2)$ is conjugate to $G=\Aut_\R(X)$ where $X$ is a real del Pezzo surface or to $G=\Aut_\R(X,\pi)$ where $\pi\colon X\rightarrow\PP^1$ is a real conic bundle, and the pairs $(X,G)$ are described as follows:
\begin{enumerate}
\item\label{thm:class 1} $X\simeq\PP^2$ and $G\simeq\mathrm{PGL}_3(\R)$,
\item\label{thm:class 3} $X\simeq \mathcal{Q}_{3,1}\subset\PP^3$ is the real rational minimal surface defined by $w^2=x^2+y^2+z^2$ whose real part is diffeomorphic to the $2$-sphere $\mathbb{S}^2$, and $G\simeq\mathbb{P}\mathrm{O}_\R(3,1)$, 
\item\label{thm:class 2.2} $X\simeq\PP^1\times\PP^1$ and $G\simeq(\mathrm{PGL}_2(\R)\times\mathrm{PGL}_2(\R))\rtimes\langle\tau\rangle$, where $\tau\colon(x,y)\mapsto(y,x)$,
\item\label{thm:class 4} $X$ is a del Pezzo surface of degree $6$ obtained by blowing up a pair of non-real conjugate points on $\FF_0$, and the action of $G$ on $\mathrm{Pic}(X)$ induces the split exact sequence
\[1\rightarrow\mathrm{SO}_2(\R)^2\rightarrow G\rightarrow D_6\rightarrow 1,\] 
\item\label{thm:class 5} $X$ is a del Pezzo surface of degree $6$ obtained by blowing up two real points on $\FF_0$, and the action of $G$ on $\mathrm{Pic}(X)$ induces the split exact sequence
\[1\rightarrow(\R^*)^2\rightarrow G\rightarrow D_6\rightarrow 1,\]
\item\label{thm:class 2} $X\simeq\FF_n$, $n\geq2$, is the $n$-th Hirzebruch surface and $G\simeq\R^{n+1}\rtimes\mathrm{GL}_2(\R)/\mu_n$, where $\mu_n=\{\pm1\}$ if $n$ is even and $\mu_n=\{1\}$ if $n$ is odd.
\item\label{thm:class 7} $\eta\colon X\rightarrow X_{[2]}$ is a birational morphism of real conic bundles, where $\pi_{[2]}\colon X_{[2]}\rightarrow\PP^1$ is the conic bundle obtained by blowing up a pair of non-real conjugate points on $\mathcal{Q}_{3,1}$. The morphism $\eta$ blows up $n\geq1$ pairs of non-real conjugate points with non-real fibres on the non-real conjugate disjoint  $(-1)$-curves of $X_{[2]}$ that are the exceptional divisors of $X_{[2]}\rightarrow \mathcal{Q}_{3,1}$. The action of $G$ on $\PP^1$ induces the split exact sequence
\[1\rightarrow \mathrm{SO}_2(\R)\rtimes\Z/2\Z\rightarrow G\rightarrow H_{\Delta}\rightarrow 1\] 
where $H_{\Delta}\subset\mathrm{PGL}_2(\R)$ is the subgroup preserving the set of images in $\PP^1$ of the points blown up by $\eta$ 
and the interval $\pi(X(\R))=\pi_{[2]}(X_{[2]}(\R))$,
\item\label{thm:class 6} $\eta\colon X\rightarrow \FF_n$ is a birational morphism of real conic bundles that is the blow-up of $2n\geq4$ points on the zero section $s_n$ of self-intersection $n$ $($see Section~$\ref{ssec:CB F}$$)$. The action of $G$ on $\PP^1$ induces a split exact sequence 
\[1\rightarrow(\R^*/\mu_n)\rtimes\Z/2\Z\rightarrow G\rightarrow  H_{\Delta}\rightarrow1,\] 
where $H_{\Delta}\subset\mathrm{PGL}_2(\R)$ is the subgroup preserving the set of images in $\PP^1$ of the points blown up by $\eta$, and $\mu_n=\{\pm1\}$ if $n$ is even and $\mu_n=\{1\}$ if $n$ is odd.
\end{enumerate}
Furthermore, the families $(\ref{thm:class 1})-(\ref{thm:class 6})$ are distinct and pairwise non-conjugate in $\Bir_{\R}(\PP^2)$.
\end{Thm}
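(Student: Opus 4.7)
The approach is the standard one for Cremona groups: realise $G$ as the automorphism group of a smooth projective real surface via regularisation, run an equivariant minimal model program, and compare against the Comessatti classification of minimal real rational surfaces. Maximality and non-conjugacy are then handled case by case.

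First, I would start with an arbitrary infinite algebraic subgroup $G\subset\Bir_\R(\PP^2)$ and use a standard regularisation theorem to find a smooth projective real surface $Y$ with a birational map $Y\dashrightarrow\PP^2$ that conjugates $G$ to a subgroup of $\Aut_\R(Y)$. A $G$-equivariant MMP on $Y$ then produces a $G$-Mori fibre space $X$: either (i) $X$ is a real del Pezzo surface with $\rk\Pic(X)^G=1$, or (ii) $\pi\colon X\to\PP^1$ is a real conic bundle with $\rk\Pic(X)^G=2$. The Comessatti classification narrows down $X$ in case (i) to $\PP^2$, $\QQ_{3,1}$, $\PP^1\times\PP^1$, and the real del Pezzo surfaces of degree at most $6$; for each of these I would compute $\Aut_\R(X)$ and single out those with infinite automorphism group, recovering (\ref{thm:class 1})--(\ref{thm:class 5}). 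In case (ii) a careful analysis of the singular fibres and their $G$-orbits reduces, by contracting $G$-invariant sets of $(-1)$-curves lying in singular fibres, to conic bundles over a Hirzebruch surface or over the blow-up of $\QQ_{3,1}$ at a pair of non-real conjugate points; an explicit computation of $\Aut_\R(X,\pi)$ then yields (\ref{thm:class 2})--(\ref{thm:class 6}).

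Second, for maximality I would argue as follows. Suppose $G\subsetneq G'$ with $G'$ algebraic. Regularising $G'$ on a surface $X'$ and running a $G'$-equivariant MMP produces another Mori fibre space $(X'',G')$ lying in the above list; the induced birational map $X\dashrightarrow X''$ is then $G$-equivariant, and comparing the explicit actions forces $G'=G$, a contradiction. The statement that every infinite algebraic subgroup is contained in a maximal one combines a boundedness result (algebraic subgroups of $\Bir_\R(\PP^2)$ have uniformly bounded dimension, since their elements are realised by rational maps of bounded degree up to finite ambiguity) with the finiteness of the above classification up to deformation, so any strictly ascending chain must stabilise.

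Third, pairwise non-conjugacy is detected by invariants of $(X,G)$ under $G$-equivariant birational equivalence. The diffeomorphism type of $X(\R)$ separates $\PP^2$, $\QQ_{3,1}$ and $\PP^1\times\PP^1$ from each other and from the higher-degree cases; the compactness versus splitness of a maximal torus in the identity component $G^\circ$ separates (\ref{thm:class 4}) from (\ref{thm:class 5}); the existence and uniqueness of a $G$-invariant conic bundle structure distinguishes the del Pezzo families from the conic bundle families; and within (\ref{thm:class 7})--(\ref{thm:class 6}) the number $n$ of blown-up pairs together with the group $H_\Delta$ provide the necessary discrimination. The main difficulty I anticipate is the maximality of the conic bundle families (\ref{thm:class 7})--(\ref{thm:class 6}): one must rule out $G$-equivariant Sarkisov links transforming the conic bundle into a del Pezzo surface with a strictly larger automorphism group, which requires a careful orbit analysis on the singular fibres and on sections of $\pi$.
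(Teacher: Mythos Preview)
Your overall strategy---regularise, run a $G$-equivariant MMP, land on a del Pezzo surface or conic bundle, then compute automorphism groups case by case---is exactly what the paper does. The reduction to del Pezzo surfaces of degree $9,8,6$ and to conic bundles dominating $X_{[2]}$ or $\FF_n$ is carried out in Proposition~\ref{prop which cases}, and the case analysis is the content of Sections~\ref{sec:DP} and~\ref{sec:CB}.

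There is one genuine gap. Your argument that every infinite algebraic subgroup lies in a maximal one invokes a ``uniformly bounded dimension'' claim, which is false: $\Aut_\R(\FF_n)$ has dimension $n+4$, unbounded in $n$. The paper avoids this entirely. Proposition~\ref{prop:among maximal} shows directly that any infinite algebraic $G$ is conjugate into one of the groups on the list, and Proposition~\ref{prop:maximal} shows each listed group is maximal; containment in a maximal subgroup follows immediately, with no chain argument.

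For maximality and non-conjugacy the paper takes a more unified route than you sketch. Rather than separating the two and using ad hoc invariants (diffeomorphism type of $X(\R)$, torus type in $G^\circ$, etc.), Proposition~\ref{prop:maximal} proves both at once: any $G$-equivariant birational map $f\colon X\dashrightarrow Y$ between two surfaces on the list decomposes into $G$-equivariant Sarkisov links of types I--IV (via Iskovskikh \cite{Isk96}), and one checks that no such link can occur. The key point, which you correctly anticipate for the conic bundle families, is that the groups on the list have no finite orbits small enough to serve as the centre of a link. For the conic bundles in (\ref{thm:class 7}) and (\ref{thm:class 6}) this is because $\Aut_\R(X/\pi)$ contains an involution swapping the two distinguished negative sections (Propositions~\ref{prop:CBX_2} and~\ref{prop:CBF}), so every finite orbit meets some fibre in at least two points, killing type~II links. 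This single Sarkisov argument replaces your separate treatments of maximality and non-conjugacy and is both shorter and more systematic.
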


We also give the parameter space of the maximal infinite algebraic subgroups of $\Bir_{\R}(\PP^2)$.

\begin{Thm}\label{thm:parametrisation}
The families in Theorem~$\ref{thm:classification}$ are distinct families and the conjugacy classes in each family are parametrised by
\begin{itemize}
\item[] $(\ref{thm:class 1})$-$(\ref{thm:class 5})$ One point.
\item[] $(\ref{thm:class 2})$ One point for each $n\geq2$.
\item[] $(\ref{thm:class 7})$ For each $n\geq1$, the set of $n$ pairs of non-real conjugate points in $\PP^1$ modulo the action of $\Aut_\R(\PP^1,[0,\infty])$.
\item[] $(\ref{thm:class 6})$ For each $n\geq2$, the set of $2n$ points in $\PP^1$ consisting of real points or pairs of non-real conjugate points, modulo the action of $\Aut_\R(\PP^1)=\mathrm{PGL}_2(\R)$.
\end{itemize}
\end{Thm}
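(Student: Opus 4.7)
The plan is, for each family, to identify conjugacy of the listed subgroups of $\Bir_\R(\PP^2)$ with isomorphism of the defining geometric datum $(X)$ or $(X,\pi)$, and then to enumerate such isomorphism classes.

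For items $(\ref{thm:class 1})$--$(\ref{thm:class 5})$, the surface $X$ is unique up to isomorphism: $\PP^2$, $\mathcal{Q}_{3,1}$, $\PP^1\times\PP^1$ and the two real forms of the complex del Pezzo surface of degree $6$ are each rigid, by the classification of real minimal smooth projective rational surfaces \cite{Com12} and the description of the two types of real del Pezzo surfaces of degree~$6$ used in the preceding sections. Hence $\Aut_\R(X)$ is determined up to isomorphism, and Theorem~\ref{thm:classification} delivers one conjugacy class per item. For item $(\ref{thm:class 2})$, the Hirzebruch surfaces $\FF_n$ are pairwise non-isomorphic for $n\ge 2$, and by Theorem~\ref{thm:classification} their automorphism groups are pairwise non-conjugate in $\Bir_\R(\PP^2)$, yielding one conjugacy class per $n\ge 2$.

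For items $(\ref{thm:class 7})$ and $(\ref{thm:class 6})$, suppose $\phi\in\Bir_\R(\PP^2)$ conjugates $G_1=\Aut_\R(X_1,\pi_1)$ to $G_2=\Aut_\R(X_2,\pi_2)$, with the two conic bundles of the same type. Then $\phi\colon X_1\dashrightarrow X_2$ is $G$-equivariant. Using the analysis of Sarkisov-type elementary links of real conic bundles carried out in the preceding sections, the maximality of $G_1$ and $G_2$ forces each $G$-equivariant link in a decomposition of $\phi$ to be an isomorphism of conic bundles; thus $\phi$ is itself an isomorphism $(X_1,\pi_1)\to(X_2,\pi_2)$ and descends to $\bar\phi\in\Aut_\R(\PP^1)$ carrying the base points of $\eta_1$ to those of $\eta_2$ and preserving the image $\pi(X(\R))\subseteq\PP^1(\R)$. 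In $(\ref{thm:class 7})$, $\pi(X(\R))$ is the proper arc $[0,\infty]$, so $\bar\phi\in\Aut_\R(\PP^1,[0,\infty])$; in $(\ref{thm:class 6})$, $\pi(X(\R))=\PP^1(\R)$, so $\bar\phi\in\mathrm{PGL}_2(\R)$. Conversely, any such $\bar\phi$ matching the base points lifts to an isomorphism of conic bundles and hence conjugates the full automorphism groups. This identifies the conjugacy classes with the orbit sets in the statement.

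Finally, families $(\ref{thm:class 7})$ and $(\ref{thm:class 6})$ are distinguished from each other by whether $\pi(X(\R))$ is a proper interval of $\PP^1(\R)$ or all of it, and from items $(\ref{thm:class 1})$--$(\ref{thm:class 2})$ by the structure of the $G$-equivariant minimal model: a conic bundle with $G$-invariant Picard rank $2$ cannot be $G$-equivariantly contracted to a del Pezzo surface without contradicting the maximality of $G$, again by the Sarkisov-type link analysis. The main obstacle is precisely this rigidity step for $(\ref{thm:class 7})$ and $(\ref{thm:class 6})$: one must show that every nontrivial $G$-equivariant elementary link between such conic bundles would contradict the maximality of $G$, which requires a careful enumeration of which elementary birational modifications commute with the full infinite automorphism group.
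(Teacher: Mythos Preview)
Your proposal is correct and follows essentially the same approach as the paper: the paper's proof of Theorem~\ref{thm:parametrisation} is very brief, simply invoking the descriptions in Propositions~\ref{prop:X_3,T}, \ref{prop:DP64}, \ref{prop:CBX_2}, \ref{prop:CBF}, while the rigidity step you highlight (that any $G$-equivariant birational map between two surfaces in the list is an isomorphism) is exactly the content of Proposition~\ref{prop:maximal}, proved there via the Sarkisov link analysis you describe. Your write-up just makes explicit the converse direction and the passage to base-point configurations modulo $\Aut_\R(\PP^1,[0,\infty])$ or $\mathrm{PGL}_2(\R)$, which the paper leaves implicit.
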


The groups listed in Theorem~\ref{thm:classification} are contained in infinite (complex) algebraic subgroups of $\Bir_{\C}(\PP^2)$ (classified in \cite[Theorem 2]{B10}), and they are all dense in their complex counterpart.\par
The elements of the group of birational diffeomorphisms of the sphere of prime order are contained in $\Aut_\R(\QQ_{3,1})$, or are automorphisms of real del Pezzo surfaces of degree $2$ or $4$, or are automorphisms of real conic bundles as in family (\ref{thm:class 7}) \cite{R15}. The finite subgroups of $\Bir_{\R}(\PP^2)$ of odd order are contained in automorphism groups appearing in the families (\ref{thm:class 1}), (\ref{thm:class 3}), (\ref{thm:class 2.2}), (\ref{thm:class 4}) or in automorphism groups of del Pezzo surfaces of degree $5$ \cite{Yas16}. The classification in Theorem~\ref{thm:classification} does not list automorphism groups of del Pezzo surfaces of degree $5$ because they are finite. \par

The real Cremona group of the plane is generated by the family of standard quintic transformations and $\Aut_\R(\PP^2)$ \cite{BM14}, and its abelianisation $\Bir_{\R}(\PP^2)/\langle\langle\Aut_\R(\PP^2)\rangle\rangle\simeq\bigoplus_\R\Z/2\Z$ is generated by the classes of the standard quintic transformations \cite{Z15}. The classification of the maximal infinite subgroups of $\Bir_{\R}(\PP^2)$ yields the following theorem.

\begin{Thm}\label{thm:quotient}
An infinite algebraic subgroup of $\Bir_{\R}(\PP^2)$ with non-trivial image in the abelianisation $\Bir_{\R}(\PP^2)/\langle\langle\Aut_\R(\PP^2)\rangle\rangle\simeq\bigoplus_\R\Z/2\Z$ has finite image in the abelianisation and is conjugate to a subgroup of an algebraic group in family $(\ref{thm:class 7})$ of Theorem~$\ref{thm:classification}$. \par
Furthermore, for each generator of $\bigoplus_\R\Z/2\Z$ there is a conjugacy class of infinite algebraic groups in family $(\ref{thm:class 7})$ of Theorem~$\ref{thm:classification}$ which is sent onto the generator.
\end{Thm}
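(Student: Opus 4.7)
The plan is to combine Theorem~\ref{thm:classification} with the generation result of \cite{BM14} and the explicit description of the abelianisation of $\Bir_\R(\PP^2)$ from \cite{Z15}. Let $G$ be an infinite algebraic subgroup of $\Bir_\R(\PP^2)$; by Theorem~\ref{thm:classification}, after conjugation we may assume that $G$ is contained in one of the eight maximal groups listed there, and the task is to compute the image of each such maximal group in the abelianisation $\bigoplus_\R\Z/2\Z$.

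First, I would show that the ambient maximal group has trivial image for every family except $(\ref{thm:class 7})$. For $X\simeq\PP^2$ the statement is tautological. For each remaining surface I would conjugate to $\Bir_\R(\PP^2)$ via a birational map whose Sarkisov decomposition uses only links other than standard quintic transformations: stereographic projection for $\QQ_{3,1}$ (family $(\ref{thm:class 3})$), an elementary link to $\FF_1$ for $\PP^1\times\PP^1$ (family $(\ref{thm:class 2.2})$), the standard links $\FF_n\dashrightarrow\FF_{n-1}$ for Hirzebruch surfaces (family $(\ref{thm:class 2})$), analogous links through $\FF_0$ for the two del Pezzo families of degree six (families $(\ref{thm:class 4})$ and $(\ref{thm:class 5})$), and de Jonqui\`eres-type links for the Hirzebruch conic bundles (family $(\ref{thm:class 6})$). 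In every case, the presentation of $\Bir_\R(\PP^2)$ established in \cite{Z15} shows that the resulting classes vanish, since the only non-trivial contributions to $\bigoplus_\R\Z/2\Z$ come from standard quintic transformations.

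Next I would treat family $(\ref{thm:class 7})$. By the exact sequence in Theorem~\ref{thm:classification}$(\ref{thm:class 7})$, the quotient $H_\Delta\subset\mathrm{PGL}_2(\R)$ preserves a finite subset of $\PP^1$ and a closed interval, hence is finite. The kernel $\SO_2(\R)\rtimes\Z/2\Z$ acts fibrewise on $\pi\colon X\to\PP^1$ and descends to an element of $\Aut_\R(\QQ_{3,1})$, which has trivial image by the previous step. Consequently the image of $G$ in the abelianisation factors through $H_\Delta$ and is finite.

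For the second assertion, \cite{Z15} shows that every generator of $\bigoplus_\R\Z/2\Z$ is represented by some standard quintic transformation $q$ of $\PP^2$ with three pairs of non-real conjugate base points. I would realise $q$ as an element of a group in family $(\ref{thm:class 7})$: blowing up $\QQ_{3,1}$ at a suitable pair of non-real points produces $X_{[2]}$, and further blowing up pairs of non-real points on the two non-real conjugate $(-1)$-curves yields an $X$ such that $q\in\Aut_\R(X,\pi)$. The conjugacy class of this group is then sent onto the chosen generator. The main obstacle is the first step: verifying that the maximal groups of the other seven families all lie in $\langle\langle\Aut_\R(\PP^2)\rangle\rangle$ demands careful bookkeeping of Sarkisov factorisations and a direct comparison with the presentation of $\Bir_\R(\PP^2)$ from \cite{Z15}.
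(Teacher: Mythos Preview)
Your overall strategy matches the paper's: reduce to the maximal groups of Theorem~\ref{thm:classification} and compute images family by family. Your treatment of families $(\ref{thm:class 1})$--$(\ref{thm:class 5})$, $(\ref{thm:class 2})$ and $(\ref{thm:class 6})$ is essentially the paper's argument, phrased in terms of Sarkisov links rather than degree bounds.

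There is, however, a genuine gap in your handling of family $(\ref{thm:class 7})$. You assert that the kernel $\mathrm{SO}_2(\R)\rtimes\Z/2\Z=\Aut_\R(X/\pi)$ ``descends to an element of $\Aut_\R(\QQ_{3,1})$''. This is false for the $\Z/2\Z$ factor. By Proposition~\ref{prop:CBX_2}(\ref{CBX_2 2}) the generator of $\Z/2\Z$ \emph{exchanges} the two $(-(n+1))$-sections $s',\bar{s}'$, which are the strict transforms of the exceptional curves of $X_{[2]}\to\QQ_{3,1}$; hence it does not preserve the contracted locus of $\eta$ and cannot descend to an automorphism of $X_{[2]}$, let alone $\QQ_{3,1}$. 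In fact Remark~\ref{rmk:link} says explicitly that this generator is a \emph{composition of elementary links} of $\pi_{[2]}\colon X_{[2]}\to\PP^1$, each blowing up a pair of non-real conjugate points on $s\cup\bar{s}$. These are precisely the links that, according to \cite{Z15}, carry non-trivial contributions to $\bigoplus_\R\Z/2\Z$. So your argument not only fails at this step, it discards the very mechanism that produces the non-trivial image you need for the second assertion.

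The finiteness conclusion can be rescued: $\mathrm{SO}_2(\R)$ is divisible, so any homomorphism to the $2$-torsion group $\bigoplus_\R\Z/2\Z$ kills it, and $\Aut_\R(X,\pi)/\mathrm{SO}_2(\R)$ is a finite extension of the finite group $H_\Delta$ by $\Z/2\Z$. But once you make this correction, the non-trivial image comes from elements involving that $\Z/2\Z$ factor (compare Remark~\ref{rmk:same fibre}), and your second paragraph should be rewritten to exploit this rather than to realise a standard quintic directly: choose the blown-up pair $q,\bar{q}$ so that $\nu(\pi(q))=r$, and the involution in $\Aut_\R(X/\pi)$ exchanging $s',\bar{s}'$ already maps to the generator $e_r$.
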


\begin{Cor}\label{cor:quotient}
The group $\Bir_{\R}(\PP^2)$ is not generated by a countable union of infinite algebraic subgroups.
\end{Cor}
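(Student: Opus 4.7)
The plan is to derive the non-generation by combining Theorem~\ref{thm:quotient} with a simple cardinality argument on the abelianisation. Suppose for contradiction that $\Bir_{\R}(\PP^2)$ is generated by a countable union $\bigcup_{i\in\N}G_i$ of infinite algebraic subgroups. Applying the natural surjection
\[
\Bir_{\R}(\PP^2)\twoheadrightarrow\Bir_{\R}(\PP^2)/\langle\langle\Aut_\R(\PP^2)\rangle\rangle\simeq\bigoplus_{\R}\Z/2\Z,
\]
the image of $\Bir_\R(\PP^2)$ would be generated by the union of the images of the $G_i$.

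First I would invoke Theorem~\ref{thm:quotient}: each $G_i$, being an infinite algebraic subgroup, has \emph{finite} image in $\bigoplus_\R\Z/2\Z$. Hence $\bigcup_i G_i$ has image contained in a countable union of finite sets, which is itself countable, and the subgroup it generates in $\bigoplus_\R\Z/2\Z$ is therefore countable.

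However, $\bigoplus_\R\Z/2\Z$ has cardinality $|\R|=\mathfrak{c}$, which is uncountable: for each real number $t$ one has the corresponding generator, and these are all distinct. Thus no countable subgroup can equal the full abelianisation. This contradicts the assumption that the $G_i$ generate $\Bir_\R(\PP^2)$.

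There is no real obstacle to this argument once Theorem~\ref{thm:quotient} is available; the only point requiring a word of care is the verification that ``generated by $\bigcup_i G_i$'' passes cleanly through the abelianisation, which is immediate from functoriality of the quotient map. Hence the corollary reduces entirely to (i) the finiteness of images granted by Theorem~\ref{thm:quotient} and (ii) the uncountability of $\bigoplus_\R\Z/2\Z$.
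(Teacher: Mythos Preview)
Your proof is correct and follows exactly the approach of the paper: use Theorem~\ref{thm:quotient} to conclude that each infinite algebraic subgroup has finite image in the abelianisation $\bigoplus_\R\Z/2\Z$, then observe that a countable union of finite images cannot generate an uncountable group. The paper compresses this into a single sentence, but the underlying argument is identical to yours.
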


Quite the opposite is true for the Cremona group $\Bir_{\C}(\PP^2)$ of the complex plane. As the standard quadratic transformation of $\PP^2$ is conjugate to an automorphism of $\PP^1\times\PP^1$, it follows from the Noether-Castelnuovo theorem \cite{Cas01} that $\Bir_{\C}(\PP^2)$ is generated by the infinite algebraic groups $\Aut_\C(\PP^2)$ and $\Aut_\C(\PP^1\times\PP^1)$.

\vskip\baselineskip
\noindent {\bf Acknowledgements:} The authors would like to express their warmest thanks to J\'er\'emy Blanc for many helpful discussions and Michel Brion for his detailed remarks on algebraic groups and equivariant completions. They would also like to thank Egor Yasinsky for interesting discussions about the real Cremona group.

\section{First steps}
Throughout this section, $\k$ is a perfect field. 
\subsection{Algebraic groups}
For a projective algebraic variety $X$ defined over $\k$, we denote by $\Bir_\k(X)$ its group of birational self-maps defined over $\k$ and by $\Aut_{\k}(X)\subset\Bir_\k(X)$ the group of $\k$-automorphisms of $X$ defined over $\k$. \par
Note on the side: the group $\Aut_{\k}(X)$ is the group of $\k$-rational points of a group scheme locally of finite type over $\k$, having at most countably many connected components.

The definition of rational actions on varieties go back to Weil and Rosenlicht (see for instance \cite{W55,Ros56}).
\begin{Def}
Let $X$ be an algebraic variety over $\k$. 
\begin{enumerate}
\item Let $G$ be an algebraic group defined over $\k$. A {\em $\k$-rational action} $\rho$ of $G$ on $X$ is a $\k$-rational morphism $\rho\colon G\times X\dashrightarrow X$ such that 
\begin{itemize}
\item $\rho(e,x)=x$ for all $x\in X$, 
\item $\rho$ is associative whenever it is defined, i.e. $\rho(g_1,\rho(g_2,x))=\rho(g_1g_2,x)$ whenever $\rho$ is defined at $(g_2,x)$, at $(g_1,\rho(g_2,x))$ and at $\rho(g_1g_2,x)$, 
\item there exist open dense subsets $U,V\subset G\times X$ such that the $\k$-rational map $G\times X\dashrightarrow G\times X$, $(g,x)\dashmapsto(g,gx)$ restricts to an isomorphism $U\rightarrow V$ and the projection of $U$ and $V$ to the first factor is surjective onto $G$. 
\end{itemize} 
For each $\k$-rational point $g\in G$, we get a $\k$-birational map $\rho(g,\cdot)\colon X\dashrightarrow X$, and this induces a group homomorphism $G(\k)\rightarrow\Bir_\k(X)$.
\item We say that $X$ is a {\em $G$-variety} if $\rho$ is regular. Then the action induces a group homomorphism $G(\k)\rightarrow\Aut_{\k}(X)$.
\end{enumerate}
\end{Def}

\begin{Rmk}
Let $G$ be an algebraic group with a $\k$-rational action on $X$. For $g\in G$ we obtain a $\k$-birational map $\rho(g,\cdot)\colon X\dashrightarrow X$ if and only if $g\in G(\k)$. The induced homomorphism $G(\k)\rightarrow\Bir_\k(X)$ is injective if the action of $G$ on $X$ is faithful.
\end{Rmk}

In this paper, we classify the infinite (not necessarily connected) maximal algebraic subgroups $G(\k)$ up to conjugation inside $\Bir_\k(X)$ for $\k=\R$. The classification over algebraically closed fields of characteristic zero is done in \cite{B10} (in fact, it also classifies the finite maximal ones). \par
As result of looking at $G(\R)$ only, the groups in the classification are real linear algebraic groups, i.e. can be embedded into some $\mathrm{GL}_N(\R)$.

\begin{Def}
Let $G$ be an algebraic group with a $\k$-rational faithful action on $X$. Then the induced the group homomorphism $G(\k)\rightarrow\Bir_\k(X)$ is injective, and we call $G(\k)$ an {\em algebraic subgroup} of $\Bir_\k(X)$.
\end{Def}

\begin{Rmk}
Note that classically, the algebraic group $G$ is called algebraic subgroup of $\Bir_\k(X)$, and not the group $G(\k)$. However, we can only view the $\k$-rational points of $G$ as elements of $\Bir_\k(X)$.
\end{Rmk}

\begin{Rmk}
Let $G$ be an algebraic group. Recall that a {\em morphism} $G\rightarrow\Bir_\k(X)$ is defined as follows (see for instance \cite{BF13}): let $\mu\colon G\times X\dashrightarrow G\times X$ be a $\k$-rational map inducing an isomorphism $U\rightarrow V$, where $U,V\subset G\times X$ are open dense subsets whose projections onto $G$ are surjective. The rational map $\mu$ is given by $(g,x)\dashmapsto (g,p_2(\mu(g,x)))$, where $p_2$ is the second projection, and for each $\k$-rational point $g\in G$, the birational map $x\mapsto p_2(\mu(g,x))$ corresponds to an element $f_g\in\Bir_\k(X)$. The maps $g\mapsto f_g$ represent a map from $G(\k)$ to $\Bir_\k(X)$, which is called {\em morphism} from $G$ to $\Bir_\k(X)$. \par
If the map $G(\k)\rightarrow\Bir_\k(X)$ is a homomorphism of groups, then its image is an algebraic subgroup of $\Bir_\k(X)$. 
\end{Rmk}

The group $\Bir_\k(X)$ can be endowed with the so-called Zariski topology, introduced by \cite{BF13, Dem70,Ser10}, which is compatible with the concept of morphism of varieties into $\Bir_\k(X)$; a subset $F\subset\Bir_\k(X)$ is closed if for any algebraic variety $A$ and any morphism $A\rightarrow\Bir_\k(X)$, the pre-image of $F$ is closed. 
Endowed with the Zariski topology, $\Bir_{\k}(\PP^n)$ is not an ind-variety, algebraic stack or algebraic space if $n\geq2$ \cite[Theorem 1, Remark 3.5]{BF13}. The following lemma gives a sufficient and necessary condition for a subgroup of $\Bir_{\k}(\PP^n)$ to be an algebraic subgroup.

\begin{Lem}[{\cite[Corollary 2.18, Lemma 2.19, Remark 2.21]{BF13}}]\label{lem:BF13} Let $G\subset\Bir_{\k}(\PP^n)$ be a subgroup. Then $G$ is an algebraic subgroup of $\Bir_{\k}(\PP^n)$ if and only if it is  closed in the Zariski topology and of bounded degree. \par
Furthermore, any algebraic subgroup of $\Bir_{\k}(\PP^n)$ can be embedded into some $\mathrm{GL}_N(\k)$.
\end{Lem}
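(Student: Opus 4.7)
The plan is to parametrise the set of birational self-maps of $\PP^n$ of bounded degree as a quasi-projective variety, show that a subgroup of $\Bir_\k(\PP^n)$ is algebraic precisely when it is cut out by a closed condition inside such a parameter space, and finally obtain the linear embedding via an equivariant regularisation.

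First I would introduce, for each $d\ge 1$, the set $\mathrm{Bir}_{\le d}(\PP^n)$ of birational self-maps of $\PP^n$ of degree $\le d$. An element is represented by an $(n+1)$-tuple of homogeneous polynomials of degree $d$ with no common factor, up to a common scalar; these tuples form a locally closed subset of some $\PP^M$, and birationality is the further locally closed condition that the appropriate resultant of Jacobian determinants does not vanish identically. The key technical facts to verify, following \cite{BF13}, are that the natural map $\mathrm{Bir}_{\le d}(\PP^n)\to\Bir_\k(\PP^n)$ is a morphism in the sense recalled earlier, and that composition $\mathrm{Bir}_{\le d}\times\mathrm{Bir}_{\le d}\to\mathrm{Bir}_{\le d^2}$ and inversion $\mathrm{Bir}_{\le d}\to\mathrm{Bir}_{\le d^{n-1}}$ become morphisms of varieties after normalising out common factors. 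Granting this, the Zariski topology on $\Bir_\k(\PP^n)$ restricts to the ordinary Zariski topology on each $\mathrm{Bir}_{\le d}$.

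With this parametrisation in hand, the equivalence follows: if $G$ is closed in the Zariski topology and of degree bounded by $d$, then $G$ is a closed subvariety of $\mathrm{Bir}_{\le d}(\PP^n)$ stable under composition and inversion, so it is an algebraic group acting $\k$-rationally and faithfully on $\PP^n$. Conversely, if $G=\mathcal{G}(\k)$ for an algebraic group $\mathcal{G}$ of finite type with a faithful $\k$-rational action, then the induced morphism $\mathcal{G}\to\Bir_\k(\PP^n)$ factors through some $\mathrm{Bir}_{\le d}(\PP^n)$ (by quasi-compactness of $\mathcal{G}$, since the degree of an element is an upper-semicontinuous function on the parameter), and its image is closed because a morphism of algebraic groups has closed image; hence $G$ is closed in $\Bir_\k(\PP^n)$.

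For the last statement, I would apply Weil's regularisation theorem to produce a smooth projective variety $Y$ birational to $\PP^n$ on which $G$ acts regularly, and then pick a $G$-linearised very ample line bundle $L$ on $Y$ (a suitable multiple of an arbitrary very ample bundle carries such a linearisation, using that the connected component of $G$ is affine and averaging over the component group). The induced representation on $H^0(Y,L)$ is faithful and exhibits $G$ as a closed subgroup of $\mathrm{GL}(H^0(Y,L))$. The main obstacle is the first step: constructing the variety structure on $\mathrm{Bir}_{\le d}(\PP^n)$, controlling the common-factor locus under composition, and matching this concrete parametrisation with the intrinsic Zariski topology on $\Bir_\k(\PP^n)$ defined purely in terms of morphisms from algebraic varieties; these verifications are precisely the substance of \cite[Corollary 2.18, Lemma 2.19, Remark 2.21]{BF13}.
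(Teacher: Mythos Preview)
The paper does not prove this lemma; it is stated with a citation to \cite[Corollary 2.18, Lemma 2.19, Remark 2.21]{BF13} and used as a black box. Your sketch is an accurate outline of the strategy in that reference, and you rightly flag that the technical core---the variety structure on $\mathrm{Bir}_{\le d}$, compatibility of composition and inversion, and matching the intrinsic Zariski topology with the parametrised one---is precisely what \cite{BF13} supplies.

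One point deserves care: in the converse direction you write ``its image is closed because a morphism of algebraic groups has closed image'', but the target $\mathrm{Bir}_{\le d}(\PP^n)$ is not an algebraic group, so this slogan does not apply directly. The actual argument in \cite{BF13} shows closedness by a different route (constructibility of the image combined with the group law on the source and a dimension/translation argument inside $\mathrm{Bir}_{\le d}$). Similarly, your description of $\mathrm{Bir}_{\le d}$ as a locally closed subset of some $\PP^M$ is a simplification: in \cite{BF13} one works with a variety $H_d$ parametrising tuples of forms and then passes to a quotient identifying tuples with common factors, which is where much of the delicacy lies. These are refinements rather than gaps, and your final paragraph acknowledges as much.
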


The fact that algebraic subgroups of $\Bir_{\k}(\PP^2)$ are linear algebraic groups, makes them approachable with classical tools. 

\begin{Lem}\label{lem:projective model} Let $\k$ be a field of characteristic zero, $G$ a linear algebraic group and $X$ a $G$-surface, all defined over $\k$. Then there exists a smooth projective $G$-surface $Y$ and a $G$-equivariant birational map $X\dashrightarrow Y$. \par
\end{Lem}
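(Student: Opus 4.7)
The plan is a three-step construction: first reduce to the normal case, then construct a normal projective $G$-equivariant completion $\bar X$ of $X$, and finally resolve singularities $G$-equivariantly to obtain $Y$.

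Replacing $X$ by its normalization---which is functorial and hence inherits the regular $G$-action, together with a $G$-equivariant birational morphism to $X$---I may assume that $X$ is normal. Next, I apply an equivariant completion theorem: any normal variety with a regular action of a linear algebraic group $G$ admits a $G$-equivariant open immersion into a normal projective $G$-variety. For $G$ connected this is Sumihiro's classical theorem, and for general (possibly disconnected) linear algebraic groups over a perfect field the required extension is due to Brion (whose assistance on equivariant completions is acknowledged in the paper). Applied to our $X$, this produces a normal projective $G$-surface $\bar X$ containing $X$ as a $G$-stable open subset.

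To finish, I desingularize $\bar X$ equivariantly. Since $\bar X$ is a normal projective surface, it admits a minimal desingularization $\pi\colon Y\to\bar X$, unique up to unique isomorphism. By this uniqueness the $G$-action on $\bar X$ lifts canonically to $Y$, making $\pi$ a $G$-equivariant projective birational morphism; alternatively, in characteristic zero one may appeal to functorial equivariant resolution of singularities \`a la Hironaka. The resulting $Y$ is smooth and projective and carries a regular $G$-action, and restricting $\pi$ over the open subset $X\subset\bar X$ yields the desired $G$-equivariant birational map $X\dashrightarrow Y$.

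The main obstacle is the equivariant completion step when $G$ is disconnected: the classical Sumihiro theorem requires $G$ to be connected, and simply taking a $G^0$-equivariant completion only produces a rational---not regular---action of the full group $G$, so the stronger completion statement for the whole linear algebraic group is indispensable. Once this input is in hand, normalization and minimal desingularization of surfaces suffice to finish the argument.
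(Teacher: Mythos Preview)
Your argument is correct and follows essentially the same blueprint as the paper---normalize, complete equivariantly using Sumihiro for $G^0$ plus Brion's extension to disconnected $G$, then desingularize equivariantly---but you reverse the order of the last two steps. The paper first desingularizes $X$ by iterated normalize--blow-up (invoking Zariski's reduction of singularities, which is $G$-equivariant since the finite singular locus is $G$-stable), then checks the resulting smooth surface is quasi-projective, and only then applies Sumihiro/Brion to complete; a final desingularization is mentioned but not justified in detail. You instead complete the normal surface first and then invoke the uniqueness of the minimal resolution to lift the $G$-action, which is a clean way to avoid the iterated blow-up argument. One small point worth making explicit in your write-up: Sumihiro's completion theorem has a quasi-projectivity hypothesis, so you should note that a normal surface over a field is automatically quasi-projective (this is what the paper's intermediate step about smooth complete surfaces being projective is doing); once that is said, your route is slightly shorter.
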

\begin{proof}
We can lift the action of $G$ to the normalisation $X'$ of $X$. The surface $X'$ has only finitely many singular points, the set of which is $G$-invariant. We blow up the singular points (this is $G$-equivariant) and normalise again, lifting the $G$-action onto the normalisation. This process ends by \cite[Theorem of Reduction of Singularities, p.688]{Z39} and we have obtained a smooth $G$-surface $X''$. 
Forgetting about the group action, we see that $X''$ is contained as open set in a complete surface, which can be desingularised. Smooth complete surfaces are projective, hence $X''$ is contained as open set in a projective surface and thus is quasi-projective. 
Let $G_0$ be the neutral component of $G$. We can apply \cite[Theorem 4.9]{Sum75}; there exists a $G_0$-equivariant smooth completion of $X''$. Equivalently, $X''$ admits a $G_0$-linearisable ample line bundle. \cite[Lemma 3.2]{Bri14} implies that $X''$ admits a $G$-linearised ample line bundle and hence a $G$-equivariant completion $Y$ of $X''$. We may replace $Y$ with a $G$-equivariat desingularisation. We have found a birational $G$-equivariant map $X\dashrightarrow Y$ to a smooth projective $G$-surface.
\end{proof}

By $\bk$ we denote the algebraic closure of the perfect field $\k$.
\begin{Def}
An algebraic variety $X$ over a field $\k$ is {\em geometrically rational} if $X_{\bk}:=X\times_{\mathrm{Spec}(\k)}\mathrm{Spec}(\bk)$ is rational, i.e. if it is rational as variety over $\bk$. \par
\noindent A geometrically rational variety $X$ is {\em $\k$-rational} if there is a birational map $X\dashrightarrow\PP^n$ defined over $\k$. 
\end{Def}

The following lemma is classical and states a necessary and sufficient condition for a (abstract) subgroup of $\Aut_{\k}(X)$ to have the structure of a linear algebraic group acting regularly on $X$.

\begin{Def}
For a smooth projective variety over $\k$ with $X(\k)\neq\emptyset$, we denote by $\mathrm{Pic}(X)=\mathrm{Pic}(X_{\bk})^{\mathrm{Gal}(\bk/\k)}$ Galois-invariant Picard group.
\end{Def}

The action of $\Aut_{\k}(X)$ on $X$ induces a homomorphism of (abstract) groups 
\[\Aut_{\k}(X)\rightarrow\Aut(\mathrm{Pic}(X)).\] 

\begin{Lem}\label{lem:lin alg}Let $X$ be a smooth projective variety defined over a field $\k$.
\begin{enumerate}
\item\label{lin alg 1} Let $D$ be a very ample divisor on $X$ defined over $\k$ and $G\subset\Aut_{\k}(X)$ the group of elements fixing $D$. Then the $\k$-embedding $X\hookrightarrow\PP^n$ given by the linear system of $D$ conjugates $G$ to a closed subgroup of $\mathrm{PGL}_{n+1}(\k)$. 
\item\label{lin alg 2} The kernel $K=\ker(\Aut_{\k}(X)\rightarrow\Aut(\mathrm{Pic}(X)))$ has the structure of a linear algebraic group acting regularly on $X$ via the given inclusion $K\subset\Aut_{\k}(X)$.
\item\label{lin alg 3} Any subgroup $G\subset\Aut_{\k}(X)$ containing $K$ whose action on $\mathrm{Pic}(X)$ is finite has the structure of a linear algebraic group acting regularly on $X$ via the inclusion $G\subset\Aut_{\k}(X)$.
\item\label{lin alg 4} Suppose that $G$ is a linear algebraic group and $X$ a $\k$-rational $G$-variety. If $G$ contains $K$, then $G$ has finite action on $\mathrm{Pic}(X)$.
\end{enumerate}
\end{Lem}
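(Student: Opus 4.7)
The plan is to establish the four parts in sequence, with parts (\ref{lin alg 2}) and (\ref{lin alg 3}) reducing to (\ref{lin alg 1}) via carefully chosen ample divisors, and (\ref{lin alg 4}) proved independently by rigidity of the N\'eron--Severi group.

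For (\ref{lin alg 1}), I would use the closed immersion $\varphi_D\colon X\hookrightarrow\PP^n=\PP(H^0(X,\mathcal{O}_X(D))^*)$ supplied by the complete linear system $|D|$. Any $g\in G$ fixing the class $[D]$ acts $\k$-linearly on $H^0(X,\mathcal{O}_X(D))$ up to scalar, so $g\mapsto g^*$ is a group homomorphism $G\to\mathrm{PGL}_{n+1}(\k)$. Its image is the group of $\k$-points of the stabiliser of the closed subscheme $\varphi_D(X)\subset\PP^n$, hence closed. Part (\ref{lin alg 2}) then follows by applying (\ref{lin alg 1}) to any very ample divisor $D$ over $\k$, which exists since $X$ is projective: the kernel $K$ tautologically fixes $[D]$ and so embeds as a closed subgroup of the linear algebraic group $\mathrm{PGL}_{n+1}(\k)$, inheriting the structure of a linear algebraic group acting regularly on the invariant closed subvariety $X$.

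For (\ref{lin alg 3}), I would produce a $G$-invariant very ample class by averaging. Let $H\subset\Aut(\Pic(X))$ be the finite image of $G$. Starting from a very ample $D$ defined over $\k$, the class $[D']:=\sum_{h\in H}h\cdot[D]\in\Pic(X)$ is $G$-invariant by construction, Galois-invariant, and very ample as a finite sum of very ample classes. Choosing a $\k$-rational divisor representing $[D']$, part (\ref{lin alg 1}) identifies $G$ with a closed, and therefore linear algebraic, subgroup of some $\mathrm{PGL}_{N+1}(\k)$.

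For (\ref{lin alg 4}), I would exploit the rigidity of N\'eron--Severi. Since $X$ is $\k$-rational, $\Pic^0(X_{\bk})=0$, so $\Pic(X_{\bk})=\mathrm{NS}(X_{\bk})$ is a finitely generated free abelian group, and its Galois-invariants $\Pic(X)$ likewise. Hence $\Aut(\Pic(X))$ is a discrete subgroup of some $\mathrm{GL}_r(\Z)$. As $G$ is a linear algebraic group of finite type, $G/G^0$ is finite, and the connected component $G^0$ acts trivially on $\mathrm{NS}(X_{\bk})$ because any $G^0$-orbit of a divisor class consists of algebraically equivalent divisors. Therefore the image of $G$ in $\Aut(\Pic(X))$ factors through the finite quotient $G/G^0$ and is finite.

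The main obstacle I expect is part (\ref{lin alg 4}): formalising cleanly that the connected algebraic group $G^0$ acts trivially on $\mathrm{NS}(X_{\bk})$ needs either the representability of $\underline{\Pic}^0$ and the resulting algebraic action of $G^0$ on $\Pic(X_{\bk})/\Pic^0(X_{\bk})$, or the elementary observation that $g\mapsto g^*D-D$ defines a morphism from the connected $G^0$ into the connected group $\Pic^0$, which is trivial once $\Pic^0=0$. A secondary technical issue in part (\ref{lin alg 3}) is producing an actual $\k$-rational divisor in the Galois-invariant class $[D']$; for the rational surfaces targeted in the paper the Brauer obstruction vanishes and this is routine.
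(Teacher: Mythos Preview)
Your arguments for (\ref{lin alg 1}) and (\ref{lin alg 2}) coincide with the paper's. For (\ref{lin alg 3}) and (\ref{lin alg 4}) your proofs are correct but take genuinely different routes from the paper.

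For (\ref{lin alg 3}) the paper does not average: it simply observes that since the image of $G$ in $\Aut(\Pic(X))$ is finite and $K$ is the kernel, $G/K$ is finite; as $K$ is already linear algebraic by (\ref{lin alg 2}), a finite extension of it is again linear algebraic. This sidesteps entirely the descent issue you flag (producing a $\k$-rational divisor in the averaged class), at the cost of invoking the fact that a finite extension of a linear algebraic group is linear algebraic. Your averaging argument is more self-contained and gives an explicit projective embedding of $G$, but is heavier.

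For (\ref{lin alg 4}) the paper argues differently and actually uses the hypothesis $K\subset G$: since $K$ is a closed normal linear algebraic subgroup of $G$, the quotient $G/K$ is linear algebraic, and it injects as an abstract group into $\Aut(\Pic(X))\subset\mathrm{GL}_n(\Z)$, which is countable; hence $G/K$ is finite. Your route via the identity component $G^0$ and rigidity of the N\'eron--Severi group is also valid, and in fact slightly stronger: it does not need $K\subset G$ and works over countable base fields, whereas the paper's countability trick implicitly uses that $\k$ is uncountable. On the other hand, the paper's argument is more elementary and avoids the Picard-scheme formalism you correctly identify as the delicate point in your approach.
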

\begin{proof}
(\ref{lin alg 1}): The linear system of $D$ induces a closed embedding $\varphi_D\colon X\hookrightarrow\PP^N$ defined over $\k$. Let $G\subset\Aut_{\k}(X)$ be the subgroup of elements whose image in $\Aut(\mathrm{Pic}(X))$ fix $D$. Denote by $H\subset\mathrm{PGL}_{N+1}(\k)$ the subgroup preserving $\varphi_D(X)$, which is a closed subgroup. It is also a subgroup of $\Aut_\k(\varphi_D(X))$, and $G$ is conjugate to $H$ via $\varphi_D$. \par
(\ref{lin alg 2}): For any very ample divisor $D$ on $X$ defined over $\k$, the group $G$ from $(1)$ contains $K$ as a closed subgroup, which is therefore a linear algebraic group as well and acts on $X$ regularly.\par
(\ref{lin alg 3}): If $G\subset\Aut_{\k}(X)$ has finite action on $\mathrm{Pic}(X)$ and contains $K$, then $G/K$ is finite. As $K$ is a linear algebraic group, also $G$ is one.\par
(\ref{lin alg 4}): The group $G$ is a linear algebraic group by assumption and the group $K$ is a normal, closed linear algebraic group by $(\ref{lin alg 2})$, so the group $G/K$ is a linear algebraic group. If $X$ is a $\k$-rational smooth projective variety then $\mathrm{Pic}(X)$ is finitely generated and has no torsion, i.e. $\mathrm{Pic}(X)\simeq\Z^n$ for some $n\in\N$. By assumption, $X$ is a $G$-variety, so there is an inclusion $G\subset\Aut_{\k}(X)$ of abstract groups. Then $G/K\subset\Aut(\mathrm{Pic}(X))\subset\mathrm{GL}_n(\Z)$. Since $\mathrm{GL}_n(\Z)$ is countable, the group $G/K$ is finite. 
\end{proof}

\subsection{Minimal surfaces}

\begin{Def}
We denote by $(X,G)$ the pair consisting of a smooth projective surface $X$ defined over $\k$ and $G$ a subgroup of $\Aut_{\k}(X)$. 
\begin{enumerate}
\item We say that $(X,G)$ is a \emph{minimal pair} (or $X$ is {\em $G$-minimal}) if for any smooth projective surface $Y$ over $\k$ any birational $G$-morphism $X\to Y$ is an isomorphism.
\item Let $\pi\colon X\rightarrow C$ be a $G$-equivariant morphism, where $C$ a curve. We say that $\pi$ is {\em relatively $G$-minimal} if for any decomposition $\pi\colon X\stackrel{\eta}\rightarrow Y\stackrel{\pi'}\rightarrow C$, where $\pi'$ is a $G$-equivariant morphism and $\eta$ is a birational $G$-equivariant morphism, $\eta$ is in fact an isomorphism, 
\end{enumerate}
\end{Def} 
Note that for $G=\{1\}$, a $G$-minimal surface is just a minimal surface.

\begin{Def}
We say that a smooth projective $G$-surface $X$ admits a conic bundle structure if there exists $\pi\colon X\rightarrow C$, where $C$ is a smooth curve and the fibre over closed point $t$ is isomorphic to a reduced conic over the residue field $\k(t)$ of $t$.
\end{Def}

\begin{Rmk}
A $G$-surface admitting a conic bundle structure $\pi\colon X\rightarrow C$ is relatively $G$-minimal if any $\k$-birational $G$-equivariant morphism $X\rightarrow Y$ of conic bundles is an isomorphism. \par
If $X$ is a geometrically rational smooth surface, then $C_{\bk}\simeq\PP^1$. If moreover $X(\k)\neq\emptyset$, then $C(\k)\neq\emptyset$, and so $C\simeq\PP^1$ over $\k$.
\end{Rmk}

A real smooth projective surface $X$ can be seen as a pair $(X_{\C},\sigma)$ consisting of a smooth projective complex variety $X_\C$ and an antiholomorphic involution $\sigma$.  

\begin{Def}
For a real conic bundle $\pi\colon X\rightarrow \PP^1$, we define 
\[\Aut_\R(X,\pi):=\{f\in\Aut_\R(X)\mid \exists \alpha\in\Aut_\R(\PP^1): \pi f=\alpha\pi\}\subset\Aut_\R(X),\] 
the group of automorphisms preserving the conic bundle structure, and 
\[\Aut_\R(X/\pi):=\{f\in\Aut_\R(X,\pi)\mid \pi f=\pi\}\subset\Aut_\R(X,\pi),\] 
its subgroup acting trivially on $\PP^1$.
\end{Def}

Every Hirzebruch surface $\FF_n$ admits a natural real structure with real points: writing
\[\FF_n\simeq\{([x_0:x_1:x_2],[u:v])\in\PP^2\times\PP^1\mid x_1v^n=x_2u^n\}\]
 the standard antiholomorphic involution of $\PP^2\times\PP^1$, that is, the standard antiholomorphic involution on either factors, descends to a antiholomorphic involution on $\FF_n$.  \par

\begin{Def}
By $\QQ_{3,1}\subset\PP^3$, we denote the real surface given by $w^2=x^2+y^2+z^2$ endowed with the standard antiholomorphic involution on $\PP^3$. 
\end{Def}

\begin{Rmk}\label{rmk:real part}
Note that $\QQ_{3,1}(\R)=\mathbb{S}^2$ is the $2$-dimensional real sphere, $\FF_{2n}(\R)=\mathbb{S}^1\times\mathbb{S}^1$ is the real torus and $\FF_{2n+1}(\R)$ is the Klein bottle for any $n\geq0$. The isomorphism of complex surfaces 
\[
\begin{array}{rcll}
&(\QQ_{3,1})_{\C}&\longrightarrow&(\PP^1\times\PP^1)_{\C} \\ \vspace{.5em}
\varphi\colon &[w:x:y:z]&\mapsto&([w+z:y+{\bf i}x],[w+z:y-{\bf i}x])=([y-{\bf i}x:w-z],[y+{\bf i}x:w-z])\\ 
\varphi^{-1}\colon&([x_0:x_1],[y_0:y_1])&\mapsto&[x_0y_0+x_1y_1:{\bf i}(x_0y_1-x_1y_0):x_0y_1+x_1y_0:x_0y_0-x_1y_1]
\end{array}
\]
induces an isomorphism of real surfaces $\varphi\colon \QQ_{3,1}\rightarrow(\PP^1\times\PP^1,\sigma_S)$, where
\[\sigma_{S}\colon([x_0:x_1],[y_0:y_1])\mapsto([\bar{y}_0:\bar{y}_1],[\bar{x}_0:\bar{x}_1])\]
Note that $\QQ_{3,1}$ is a del Pezzo surface of degree $8$ with $\rk(\mathrm{Pic}(\QQ_{3,1}))=1$, whereas $\rk(\mathrm{Pic}(\PP^1\times\PP^1))=2$.
\end{Rmk}

\begin{Thm}[\cite{Com12}]
Let $X$ be a minimal geometrically rational real surface $X$ with $X(\R)\neq\emptyset$. If $X$ is $\R$-rational, then $X$ is $\R$-isomorphic to $\PP^2$, to the quadric $\QQ_{3,1}$ or to a real Hirzebruch surface $\FF_n$, $n\neq1$. 
\end{Thm}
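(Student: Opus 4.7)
My plan is to combine the Iskovskikh--Mori classification of minimal geometrically rational surfaces over a perfect field with the Comessatti topological criterion that an $\R$-rational smooth projective real surface has connected and non-empty real locus.

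\textbf{Step 1: the two families.} First I apply the minimal model program over $\R$ (equivalently, Iskovskikh's classification of minimal geometrically rational surfaces over a perfect field) to conclude that $X$ falls into exactly one of two types: either $X$ is a del Pezzo surface with $\rk\Pic(X)=1$, or $X$ carries a relatively minimal conic bundle structure $\pi\colon X\rightarrow B$ with $\rk\Pic(X)=2$. In the conic bundle case the image $\pi(X(\R))\subset B(\R)$ is non-empty, and since $B$ is a smooth curve of arithmetic genus $0$ with a real point this forces $B\simeq\PP^1$.

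\textbf{Step 2: the conic bundle case.} Next I analyse the possible singular fibres of $\pi$. Geometrically each is the union of two $(-1)$-curves meeting transversally. A singular fibre over a non-real point $p$ is Galois-paired with the singular fibre over $\bar p$; choosing one component of each of the two fibres then gives a Galois-invariant pair of disjoint $(-1)$-curves which can be contracted equivariantly, contradicting relative minimality. Hence every singular fibre must lie over a real point, with its two components swapped by complex conjugation. One then checks that in this situation each such real nodal fibre produces a ``saddle'' between two $\mathbb{S}^1$-fibred annular regions, so that $X(\R)$ is disconnected. By the Comessatti connectedness criterion this contradicts $\R$-rationality; therefore $\pi$ has no singular fibres and is a $\PP^1$-bundle, so $X\simeq\FF_n$ for some $n\ge 0$. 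Finally, $\FF_1$ admits a real section of self-intersection $-1$ which can be contracted equivariantly to recover $\PP^2$, so $\FF_1$ is not minimal, leaving $n=0$ or $n\ge 2$.

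\textbf{Step 3: the del Pezzo case.} I treat each degree $d=K_X^2\in\{1,\dots,9\}$ in turn. For $d=9$, $X$ is a two-dimensional Severi--Brauer variety with a real point, hence $X\simeq\PP^2$. For $d=8$, the geometric model is $\FF_0$ or $\FF_2$; $\FF_2$ has Picard rank $2$ over $\R$, and $\FF_0$ has, up to $\R$-isomorphism, exactly two non-empty real forms with $\rk\Pic=1$ and Galois swapping the two rulings, namely $\QQ_{3,1}$ and $\QQ_{4,0}$ (the latter without real points), so $X\simeq\QQ_{3,1}$. For $2\le d\le 7$, $\Pic(X_\C)$ has rank $10-d\ge 3$ and carries the standard $E_{9-d}$-lattice structure on the orthogonal complement of $K_X$; the condition $\rk\Pic(X)=1$ forces the complex conjugation to act on $\Pic(X_\C)$ with exactly one invariant line. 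One then enumerates the finitely many possible such actions, and for each verifies using the known topology of real del Pezzo surfaces that $X(\R)$ is either empty or disconnected, again contradicting the Comessatti connectedness criterion.

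\textbf{Main obstacle.} The substantive work lies in Step~3 for $2\le d\le 7$, where one must enumerate the Galois actions on the $E_{9-d}$ configuration compatible with $\rk\Pic(X)=1$ and, for each such action, compute the topology of $X(\R)$. The essential external input is the Comessatti connectedness theorem; granted this, the theorem reduces to a finite, if laborious, enumeration.
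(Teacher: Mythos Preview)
The paper does not give its own proof of this statement: it is quoted as a classical result of Comessatti and used as input. So there is nothing to compare against, and your proposal must stand on its own.

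There is a genuine gap in Step~2. Your claim that ``each such real nodal fibre produces a saddle between two $\mathbb{S}^1$-fibred annular regions, so that $X(\R)$ is disconnected'' is false. A real singular fibre whose two components are complex conjugate has a single real point (the node), and locally the conic bundle looks like $x^2+y^2=tz^2$: for $t>0$ the real fibre is a circle, for $t<0$ it is empty, and for $t=0$ it is a point. Thus the image $\pi(X(\R))\subset\PP^1(\R)\simeq S^1$ is a union of closed arcs whose endpoints are exactly the images of the singular fibres, and $X(\R)$ has one connected component (a sphere) for each arc. With exactly two singular fibres one gets a single sphere, so $X(\R)$ is connected; this is precisely the surface $X_{[2]}$ of the paper, and it is \emph{relatively} minimal as a conic bundle. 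What rules it out is not connectedness but absolute minimality: the two exceptional $(-1)$-curves of $X_{[2]}\to\QQ_{3,1}$ form a Galois-invariant disjoint pair that can be contracted, so $X_{[2]}$ is not minimal. You need to insert this case explicitly: either $\pi$ has $\ge 4$ singular fibres (then $X(\R)$ is disconnected, contradicting $\R$-rationality), or exactly $2$ (then $X\simeq X_{[2]}$ is not minimal), or none (then $X\simeq\FF_n$).

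Step~3 is only a sketch. For $d=7$ the Weyl group acting on $K_X^\perp$ is $\Z/2\Z$, and the unique nontrivial involution has invariant Picard rank $2$, not $1$, so this case is vacuous; for $d=5,6$ a del Pezzo surface with a real point is automatically $\R$-rational, so again you must argue via the lattice that $\rk\Pic(X)=1$ is impossible or leads to $X(\R)=\emptyset$, not via disconnectedness. The genuinely delicate degrees are $d\le 4$, where the enumeration you allude to is real work; as written, the proposal does not carry it out.
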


\begin{Def}\label{def:X_2}
By $X_{[2]}$ we denote a del Pezzo surface of degree $6$ obtained by blowing up $\QQ_{3,1}$ in a pair of non-real conjugate points. The notation is motivated by the fact that $\rk{\mathrm{Pic}}(X_{[2]})=2$.
\end{Def}

\begin{Lem}\label{lem:X_2}
Any real surface $X_{[2]}$ is isomoprhic to
\[X_{[2]}\simeq\{([w:x:y:z],[u:v])\in\PP^3\times\PP^1\mid wz=x^2+y^2,uz=vw\}\]
endowed with the antiholomorphic involution that is the restriction of the standard antiholomorphic involution on $\PP^3\times\PP^1$. 
\end{Lem}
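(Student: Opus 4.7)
The plan is to exhibit the surface in the statement as the blow-up of a specific pair of non-real conjugate points on $\mathcal{Q}_{3,1}$ realised in suitable coordinates, and then to argue that any two such pairs are conjugate under $\Aut_\R(\mathcal{Q}_{3,1})$, so that $X_{[2]}$ is well defined up to real isomorphism.

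First I would perform the real linear change of coordinates on $\PP^3$ given by $(w,x,y,z) \mapsto (w+z, x, y, w-z)$. Under this change, $\mathcal{Q}_{3,1} = \{w^2 = x^2+y^2+z^2\}$ is carried to the smooth real quadric $Q := \{wz = x^2+y^2\} \subset \PP^3$, and the standard antiholomorphic involution on $\PP^3$ is preserved. The intersection of $Q$ with the real line $\{w=z=0\}$ is cut out by $x^2+y^2=0$, yielding exactly the two non-real conjugate points $p = [0:1:\mathbf{i}:0]$ and $\bar p = [0:1:-\mathbf{i}:0]$.

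Next I would identify $Y := \{([w:x:y:z],[u:v]) \mid wz=x^2+y^2,\; uz=vw\} \subset \PP^3 \times \PP^1$ with the blow-up of $Q$ at $\{p,\bar p\}$. Both defining equations have real coefficients, so $Y$ inherits the restriction of the standard antiholomorphic involution of $\PP^3\times\PP^1$. The first projection $\mathrm{pr}_1\colon Y \to Q$ is an isomorphism over the open set where $(w,z)\neq (0,0)$, since there $[u:v]$ is forced to equal $[w:z]$; over each of the points $p,\bar p$ the fibre is a $\PP^1$. A standard local computation (the blow-up of $\PP^3$ along the line $\{w=z=0\}$ is exactly $\{uz=vw\} \subset \PP^3 \times \PP^1$, and the strict transform of $Q$ under it is this blow-up because $Q$ meets this line transversally at $p,\bar p$) then identifies $\mathrm{pr}_1$ with the blow-up of $Q$ at $\{p,\bar p\}$; since $Q$ is a smooth quadric, $Y$ is smooth projective, and the blow-up of a del Pezzo surface of degree $8$ at two points not lying on a common line of $Q$ is a del Pezzo surface of degree $6$.

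It remains to observe that the isomorphism class of the blow-up does not depend on the chosen pair of non-real conjugate points. Each such pair $\{q,\bar q\}$ spans a real line in $\PP^3$, whose underlying real $2$-plane in $\R^4$ is negative definite for the form $w^2-x^2-y^2-z^2$ (positive definiteness is impossible because the form has only one positive direction, and indefiniteness would force a real intersection with $\mathcal{Q}_{3,1}(\R)=\mathbb{S}^2$). Witt's extension theorem then provides, for any two such $2$-planes, an element of $\mathrm{O}(1,3)(\R) = \Aut_\R(\mathcal{Q}_{3,1})$ carrying one to the other, and hence sending one pair of conjugate points to the other. The main obstacle I anticipate is this last transitivity claim; once it is in place, the explicit model $Y$ realises $X_{[2]}$ up to real isomorphism with the advertised real structure.
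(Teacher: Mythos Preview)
Your proposal is correct and follows essentially the same route as the paper: pass to the real quadric $\{wz=x^2+y^2\}$ by the linear change $(w,x,y,z)\mapsto(w+z,x,y,w-z)$, realise the surface as the strict transform of this quadric under the blow-up of $\PP^3$ along the real line $\{w=z=0\}$ (which meets the quadric transversally in $[0:1:\pm\mathbf{i}:0]$), and invoke transitivity of $\Aut_\R(\QQ_{3,1})$ on pairs of non-real conjugate points. The only difference is that the paper quotes this transitivity from \cite[Lemma~4.8]{R15}, while you give a self-contained justification via Witt's extension theorem applied to negative-definite $2$-planes for the form $w^2-x^2-y^2-z^2$; your signature argument is sound and yields the same conclusion.
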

\begin{proof}
Consider the surface $S\subset\PP^3$ given by $wz=x^2+z^2$. The isomorphism $[w:x:y:z]\mapsto[w+z:x:y:w-z]$ yields an isomorphism $\QQ_{3,1}\simeq S$. Pick a non-real point $p\in S$ and denote by $\eta\colon X_{[2]}\rightarrow S$ the blow-up of $p,\bar{p}$. We find an automorphism of $S$ that sends $p$ onto $[0:1:{\bf i}:0]$ \cite[Lemma 4.8]{R15}. The blow-up of $p,\bar{p}$ on $\QQ_{3,1}$ is the restriction of the blow-up of $\PP^3$ along the line $l$ given by $w=z=0$, as the intersection of $l$ and $S$ is transversal and equal to the set $\{p,\bar{p}\}$. This yields the claim.
\end{proof}

\begin{Rmk}
The projection $\pi_{[2]}\colon X_{[2]}\rightarrow\PP^1$, $([w:x:y:z],[u;v])\mapsto[u:v]$ is real conic bundle morphism with two singular fibres, which lie over $0$ and $\infty$, and without sections.
\end{Rmk}

\begin{Prop}\label{prop which cases}
Let $G$ be an infinite algebraic subgroup of $\Bir_{\R}(\PP^2)$. Then there exists a $G$-equivariant real birational map $\PP^2\dashrightarrow X$ to a real smooth $G$-surface $X$, which is one of the following:
\begin{enumerate}
\item $X$ is a real del Pezzo surface of degree 6, 8 or 9 such that $\rk(\Pic(X)^G)=1$.
\item $X$ admits a real conic bundle structure $\pi_X\colon X\rightarrow\PP^1$ with $\rk(\Pic(X)^G)=2$ and $G\subset\Aut_\R(X,\pi_X)$. 
\end{enumerate}
Furthermore, in $(2)$, there is a birational morphism of conic bundles $\eta\colon X\to Y$, where $Y$ is a Hirzebruch surface $\FF_n$, $n\geq1$, or $Y\simeq X_{[2]}$. 
\end{Prop}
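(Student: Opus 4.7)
The plan is to replace the rational $G$-action on $\PP^2$ by a regular action on a smooth projective surface via Lemma~\ref{lem:projective model}, then run a $G$-equivariant minimal model program over $\R$, and finally invoke the equivariant classification of minimal geometrically rational real surfaces.

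Since $G$ is a linear algebraic group by Lemma~\ref{lem:BF13} and $\PP^2$ carries a rational $G$-action, Lemma~\ref{lem:projective model} produces a smooth projective $G$-surface $Y$ together with a $G$-equivariant $\R$-birational map $\PP^2\dashrightarrow Y$. I would then run the $G$-equivariant MMP on $Y$: iteratively contract disjoint $G$-orbits of $(-1)$-curves whose union is defined over $\R$. Since $\rk(\Pic(Y)^G)$ drops strictly at each step, the process terminates at a $G$-minimal smooth projective real surface $X$. The equivariant Iskovskikh--Manin classification over a perfect field then splits $X$ into two cases: either $\rk(\Pic(X)^G)=1$ and $X$ is a del Pezzo surface, or $\rk(\Pic(X)^G)=2$ and $X$ admits a $G$-equivariant conic bundle structure $\pi_X\colon X\to C$. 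In the second case $C\simeq\PP^1$ since $X(\R)\neq\emptyset$ and $X$ is geometrically rational, and $G\subset\Aut_\R(X,\pi_X)$ by construction.

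To pin down the del Pezzo degree in case (1), I use that $G$ is infinite: for degree $\le 5$ the automorphism group over $\C$ is finite, and for degree $7$ the unique $(-1)$-curve on $X_\C$ is $G$-invariant, contradicting $G$-minimality. Hence only degrees $6$, $8$, $9$ remain. For the ``furthermore'' clause in case (2), the strategy is to drop $G$-equivariance and contract $\R$-birationally: each singular fiber of $\pi_X$ consists of two $(-1)$-curves meeting transversally, which are either both real (so one is contractible over $\R$) or a non-real conjugate pair (in which case the whole fiber is not contractible over $\R$). Successively contracting all real-contractible components produces a conic bundle $Y$ whose singular fibers, if any, are all non-contractible. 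If $Y$ has no singular fibers it is a $\PP^1$-bundle over $\PP^1$ with real points, hence isomorphic to some $\FF_n$; an elementary transformation, if needed, ensures $n\ge 1$. Otherwise, the classification of minimal real conic bundles with only non-contractible singular fibers and no real section identifies $Y$ with $X_{[2]}$ via Lemma~\ref{lem:X_2}.

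The main obstacle is invoking the $G$-equivariant real Iskovskikh--Manin classification: one must verify that extremal $K_X$-negative contractions on $X_\C$ can be taken simultaneously $\mathrm{Gal}(\C/\R)$- and $G$-equivariantly, and correctly track the $G$-invariant Picard rank through each contraction. A secondary delicate point is the identification of $X_{[2]}$ as the unique real minimal conic bundle with precisely two non-contractible singular fibers and no real section, which rests on Lemma~\ref{lem:X_2} together with classical results on real algebraic surfaces.
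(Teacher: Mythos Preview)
Your overall strategy matches the paper's: regularize the action, run a $G$-equivariant MMP over $\R$, invoke the Iskovskikh--Manin dichotomy, then rule out the bad del Pezzo degrees and identify the minimal real conic bundles. A few points, however, are inaccurate or incomplete.

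For degree $7$: a del Pezzo of degree $7$ has \emph{three} $(-1)$-curves on $X_\C$ (a chain $E_1\text{--}L\text{--}E_2$), not one. The correct observation, which the paper uses, is that the middle curve $L$ is fixed both by every automorphism and by $\mathrm{Gal}(\C/\R)$, hence is a real $G$-invariant $(-1)$-curve, contradicting $G$-minimality.

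In the conic bundle case you assert that ``the classification of minimal real conic bundles with only non-contractible singular fibres and no real section identifies $Y$ with $X_{[2]}$'', but you neither justify the absence of a real section nor explain why $Y$ has exactly two singular fibres. The paper's argument for the latter is topological: since $X$ is $\R$-rational, $Y(\R)$ is connected; each remaining singular fibre (a pair of conjugate non-real lines) has a single real point, and crossing it in $\PP^1(\R)$ toggles whether the smooth fibre has real points. Connectedness of $Y(\R)$ then forces $0$ or $2$ such fibres. With two, one checks directly that $Y$ is the blow-up of $\QQ_{3,1}$ in a conjugate pair, i.e.\ $Y\simeq X_{[2]}$.

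Also, an elementary transformation is not a birational \emph{morphism}, so you cannot use one to arrange $n\ge 1$ while keeping a morphism $\eta\colon X\to Y$. (In fact the paper's own proof produces $\FF_n$ with $n\neq 1$, so there is a minor discrepancy with the statement.)

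Finally, a small technical point: Lemma~\ref{lem:projective model} takes as input a $G$-surface with \emph{regular} action, so you should invoke Rosenlicht's regularization theorem first, as the paper does.
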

\begin{proof}
By Lemma~\ref{lem:BF13}, $G$ is a linear algebraic subgroup of $\Bir_{\R}(\PP^2)$. So, there is a real algebraic $G$-surface $X'$ and a $G$-equivariant real birational map $\phi\colon X'\dasharrow \PP^2$ \cite[Theorem 1]{Ros56}. By Lemma~\ref{lem:projective model} there exists a smooth real projective $G$-surface $X''$ and a $G$-equivariant real birational map $X\dashrightarrow X''$. After contracting all the sets of disjoint $G$-invariant real $(-1)$-curves and all sets of disjoint $G$-invariant pairs of non-real conjugate $(-1)$-curves, we obtain a real smooth projective $G$-variety $X$ that can be one of the following possibilities by \cite[Excerise 2.18]{KM08} and Lemma~\ref{lem:lin alg}~(\ref{lin alg 4}):
\begin{enumerate}[(i)]
\item $X$ is a del Pezzo surface and $\rk(\Pic(X)^G)=1$,
\item $X$ admits a real conic bundle structure $X\xrightarrow{\pi_X}\PP^1$ and $\rk(\Pic(X)^G)=2$.
\end{enumerate} 
Due to Comessatti \cite{Com12}, the minimal smooth geometrically rational real surfaces are $\R$-isomorphic to $\PP^2$, to $\QQ_{3,1}$ or to a real Hirzebruch surface $\FF_n$, $n\neq1$. Thus, forgetting about the action of $G$ and $\pi_X$, there is a real birational morphism $X\rightarrow Y$ where $Y$ is one of these three minimal surfaces.\par
In case (i), the situation is as follows: Any real geometrically rational del Pezzo surface is the blow-up of at most $8$ $\C$-points on one of the three real minimal surfaces. The automorphism group of a del Pezzo surface of degree $\leq5$ is finite \cite[Section 6]{DI09}. Any real del Pezzo surface $X$ of degree $7$ has three $(-1)$-curves, one of which is real. Hence $X$ is the blow-up of $\FF_0$ or $\QQ_{3,1}$ in one real point $p$ and any automorphism of $X$ preserves its exceptional divisor and hence is the lift of an automorphism from $\QQ_{3,1}$ or $\FF_0$. In particular, $\Aut_\R(X)$ is conjugate by the blow-up of $p$ to a subgroup of $\Aut_\R(\FF_0)$ or $\Aut_\R(\QQ_{3,1})$. This leaves degree $6$, $8$ and $9$. \par
In case (ii), the situation is as follows: Forgetting the action of $G$, there is a real birational morphism of real conic bundles $X\rightarrow Y$, which is the contraction of all disjoint real and disjoint pairs of non-real $(-1)$-curves in the fibres. We obtain a relatively minimal real conic bundle $\pi_Y\colon Y\rightarrow\PP^1$ with no real $(-1)$-curves and no non-real conjugate singular fibres, i.e. it has at most real singular fibres whose components are non-real conjugate $(-1)$-curves. Forgetting about $\pi_Y$, we obtain a real birational morphism $Y\rightarrow Z$ to a minimal real smooth rational surface, and $Z(\R)$ is connected and homeomorphic to the real projective plane, the sphere, the torus or the Klein bottle (see Remark~\ref{rmk:real part}). It follows that $Y$ has either none or exactly two singular fibres (otherwise $Y(\R)$ is not connected and thus not rational). In this case, $Y$ is the blow-up of $\QQ_{3,1}$ in a pair of non-real conjugate points, i.e. $Y\simeq X_{[2]}$, or $Y$ has no singular fibres and is isomorphic to a real Hirzebruch surface $\FF_n$, $n\neq1$. 
\end{proof}

\begin{Lem}[Real version of {\cite[Proposition 2.2.6]{B10}}]\label{lem aut} Let $X$ be a real smooth projective geometrically rational surface. 
\begin{enumerate}
\item If $X$ is a del Pezzo surface, then $\Aut_\R(X)$ is a linear algebraic group. 
\item If $\pi\colon X\rightarrow\PP^1$ is a real conic bundle, $\Aut_\R(X,\pi)$ is a linear algebraic group. 
\end{enumerate}
\end{Lem}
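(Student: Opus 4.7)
\medskip
\noindent\textbf{Proof proposal.} The plan is to apply Lemma~\ref{lem:lin alg}~(\ref{lin alg 3}) in both cases. This requires, in each setting, that the group contain the kernel $K=\ker(\Aut_\R(X)\to\Aut(\Pic(X)))$ and have finite image in $\Aut(\Pic(X))$.

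The inclusion $K\subset\Aut_\R(X)$ is tautological. For case (2), any $g\in K$ fixes the fibre class $[F]$ and therefore permutes the members of the base-point-free pencil $|F|$, inducing an automorphism of $\PP^1$ compatible with $\pi$; hence $K\subset\Aut_\R(X,\pi)$ as well.

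For case (1), since $X$ is a del Pezzo surface, $K_X^2=(-K_X)^2>0$. The Hodge index theorem then forces $K_X^\perp$ to be negative definite inside $\Pic(X_{\bk})\otimes\R$, so the group of isometries of the lattice $\Pic(X_{\bk})$ that fix $K_X$ is finite. Since $\Aut_\R(X)$ acts by lattice isometries fixing $K_X$, its image in $\Aut(\Pic(X_{\bk}))$, and a fortiori in $\Aut(\Pic(X))$, is finite.

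For case (2), the group $\Aut_\R(X,\pi)$ fixes $[F]$ and permutes the finite set of components of the singular fibres of $\pi_{\bk}$, each of which is a $(-1)$-curve. This gives a homomorphism to a finite permutation group, and I would then show by a direct intersection computation that an element fixing $[F]$ together with every such component must also fix any chosen section class, hence acts trivially on $\Pic(X_{\bk})$. The main obstacle is to make this last step uniform across the possible relative minimal models (in particular the boundary case where $\pi_{\bk}$ has no singular fibres, so $X_{\bk}\simeq\FF_n$ and one uses the unique negative section). Once this is established, both statements follow from Lemma~\ref{lem:lin alg}~(\ref{lin alg 3}).
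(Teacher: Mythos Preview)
Your proposal is correct, and for part~(2) it is essentially the paper's argument: both show that $\Aut_\R(X,\pi)$ fixes $K_X$ and the fibre class, permutes the finitely many components of singular fibres, and hence has finite image in $\Aut(\Pic(X))$, so Lemma~\ref{lem:lin alg}~(\ref{lin alg 3}) applies. The paper phrases the last step as ``$\Pic(X)$ is generated by $K_X$ and the classes of singular fibres'', which you unpack more carefully; note that since $2s$ lies in the $\Z$-span of $K_X$ and fibre classes, these generate a finite-index sublattice, and an isometry fixing a finite-index sublattice of a torsion-free lattice pointwise is the identity, so your intersection computation can be replaced by this one-line observation. Your remark that $K\subset\Aut_\R(X,\pi)$ is a genuine point the paper leaves implicit.

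For part~(1) your route differs from the paper's. You invoke the Hodge index theorem to show that isometries of $\Pic(X_{\bk})$ fixing $K_X$ form a finite group (since $K_X^\perp$ is negative definite), and then apply Lemma~\ref{lem:lin alg}~(\ref{lin alg 3}). The paper instead uses Lemma~\ref{lem:lin alg}~(\ref{lin alg 1}) directly: since $-mK_X$ is very ample for suitable $m$ and is fixed by every automorphism, $\Aut_\R(X)$ is conjugated by the anticanonical embedding to a closed subgroup of $\mathrm{PGL}_{N+1}(\R)$. The paper's argument is shorter and avoids the lattice discussion entirely; your argument is a bit more uniform with part~(2) and makes explicit the finiteness of the image in $\Aut(\Pic(X))$, which is used elsewhere.
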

\begin{proof}
(1): Any element of $\Aut_\R(X)$ fixes any multiple of the anti-canonical divisor, so the claim follows from Lemma~\ref{lem:lin alg}~$(\ref{lin alg 1})$. \par
(2): Since $X$ is smooth projective and geometrically rational, $\mathrm{Pic}(X)$ is generated by $K_X$ and the real classes of the singular fibres of $\pi$ (or if there are none, the general fibre) of which there are finitely many. Furthermore, because $X$ is geometrically rational, $\Pic(X)\simeq\Z^n$. Let $K:=\ker(\Aut_\R(X)\rightarrow\Aut(\mathrm{Pic}(X)))$. Then $\Aut_\R(X)/K$ is a subgroup of $\mathrm{GL}_n(\Z)$ and fixes $K_X$ and the class of the general fibre. It therefore corresponds to a subgroup of permutations of the components of the singular fibres, and is thus a finite group. Lemma~\ref{lem:lin alg} (\ref{lin alg 3}) implies that $\Aut_\R(X)$ is a linear algebraic group.
\end{proof}

\begin{Rmk}\label{rmk:which groups} 
It follows from Proposition~\ref{prop which cases} that every infinite algebraic subgroup of $\Bir_{\R}(\PP^2)$ is contained in the automorphism group of one of the surfaces in Proposition~\ref{prop which cases}, which are linear algebraic groups by  Lemma~\ref{lem aut}. It now suffices to study the pairs $(X,\Aut_\R(X))$ and $(X,\Aut_\R(X,\pi))$ for the cases stated in Proposition~\ref{prop which cases} and to determine which automorphism groups are maximal algebraic groups up to conjugacy.
\end{Rmk}

\section{Real rational del Pezzo surfaces of degree 6}\label{sec:DP}

According to Proposition~\ref{prop which cases}, the maximal infinite algebraic subgroups of $\Bir_{\R}(\PP^2)$ are contained in the automorphism groups of real del Pezzo surfaces of degree $9,8$ or $6$ or the automorphism groups of real conic bundles. In this section, we first classify the real del Pezzo surfaces of degree $6$ and give their automorphism groups as explicitly as we dare. 

\begin{Lem}\label{lem:dP class} Let $X$ be a real del Pezzo surface of degree $6$. 
\begin{enumerate}
\item Then $X$ is the blow-up of $\QQ_{3,1}$ or $\FF_0$ in two real or a pair of non-real conjugate points and there are four isomorphism classes, represented in Figures~\ref{fig:X_2}, \ref{fig:X_3,S}, \ref{fig:X_3,T} and \ref{fig:X_4}.
\item The rank of their invariant Picard group is $2, 3, 3$ and $4$, respectively.
\end{enumerate}
\end{Lem}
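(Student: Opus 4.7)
The plan is to classify the Galois action on the hexagon of $(-1)$-curves.

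I would first pass to the algebraic closure: over $\C$ a del Pezzo surface of degree $6$ is unique up to isomorphism (it is the blow-up of $\PP^2$ at three points in general position), it contains exactly six $(-1)$-curves, and their dual intersection graph is a hexagon whose full symmetry group is the dihedral group $D_6$ of order $12$. The Galois involution $\sigma\in\mathrm{Gal}(\C/\R)$ preserves the set of $(-1)$-curves and their intersection pairing, so it induces an element of $D_6$ of order at most $2$. Up to conjugacy in $D_6$ there are precisely four such elements: the identity, the half-turn, a reflection through two opposite vertices, and a reflection through midpoints of two opposite edges. Since the hexagon symmetries lift to holomorphic automorphisms of $X_\C$ and $(\C^*)^2\subset\Aut(X_\C)$ acts trivially on $\Pic(X_\C)$, two involutions that are conjugate in $D_6$ yield $\R$-isomorphic real structures on $X$, so the classification reduces to these four cases.

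For each class I would identify $X$ by contracting a chosen $\sigma$-invariant set of pairwise disjoint $(-1)$-curves until I reach a real minimal surface of Comessatti's list:
\begin{itemize}
\item[(i)] Trivial action: all six $(-1)$-curves are real; contracting two disjoint ones yields a real del Pezzo of degree $8$ of rank $2$ with two real rulings, i.e.\ $\FF_0$, so $X$ is the blow-up of $\FF_0$ at two real points.
\item[(ii)] Half-turn: the three Galois-orbits are pairs of opposite (hence disjoint) conjugate $(-1)$-curves; contracting one orbit gives a rank-$2$ real del Pezzo of degree $8$ with real points whose Galois action preserves each ruling, hence $\FF_0$. So $X$ is the blow-up of $\FF_0$ at a pair of non-real conjugate points.
\item[(iii)] Vertex reflection: two opposite vertices are fixed (two disjoint real $(-1)$-curves) and the remaining four form two conjugate pairs of disjoint curves; contracting the two real $(-1)$-curves gives a real del Pezzo of degree $8$ of $\R$-Picard rank $1$, i.e.\ $\QQ_{3,1}$. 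Hence $X$ is the blow-up of $\QQ_{3,1}$ at two real points.
\item[(iv)] Edge reflection: the three orbits consist of one pair of opposite (disjoint) conjugate $(-1)$-curves and two pairs of adjacent (meeting) conjugate curves; contracting the disjoint orbit gives a real del Pezzo of degree $8$ of $\R$-Picard rank $1$, i.e.\ $\QQ_{3,1}$. Hence $X\simeq X_{[2]}$.
\end{itemize}
The key verification in (ii) and (iv) is that the rank-$2$ contracted surface is as claimed, which I would check by examining how the induced $\sigma$ permutes the two rulings of the resulting complex $\FF_0$ (Galois swaps the rulings precisely on $\QQ_{3,1}$).

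For part $(2)$, the invariant Picard rank is either computed as $\tfrac12(4+\mathrm{tr}\,\sigma)$ on $\Pic(X_\C)\simeq\Z^4$ (the four involutions have traces $4,2,2,0$, giving invariant ranks $4,3,3,2$) or, more cleanly, read off from the explicit blow-up presentations using $\rk\Pic(\FF_0)=2$, $\rk\Pic(\QQ_{3,1})=1$ together with the fact that blowing up either a real point or a conjugate pair of non-real points raises the real Picard rank by exactly one. The four cases are pairwise non-isomorphic: (i) and (iv) are distinguished by rank alone, while the two rank-$3$ cases (ii) and (iii) are distinguished by the number of real $(-1)$-curves on $X$ (zero in (ii), two in (iii)).

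The hard part is the bookkeeping on the hexagon: for each element of $D_6$, one has to identify which $\sigma$-orbits of $(-1)$-curves are disjoint and which meet, and then correctly recognise the rank-$2$ contracted real surface as either $\FF_0$ or $\QQ_{3,1}$ by tracking the induced action on the two rulings. Once this is set up, the rank computation and the non-isomorphism statement are immediate.
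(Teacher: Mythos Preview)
Your approach is exactly the paper's: classify the Galois action on the hexagon of $(-1)$-curves as an order-$\le 2$ element of $D_6$ up to conjugacy; the paper's own proof is extremely terse (it simply points to the four figures), so your explicit list of the four conjugacy classes, the contractions to $\FF_0$ or $\QQ_{3,1}$, and the trace computation for the rank are all useful elaboration rather than a different method. One step to tighten: the assertion that $D_6$-conjugate involutions yield $\R$-isomorphic real forms needs more than ``symmetries lift and the torus acts trivially on $\Pic$'' (that only reduces to the case of equal action on $\Pic$; one still has to kill an $H^1$ of the torus), but you actually recover uniqueness anyway in your contraction step, since once $X$ is exhibited as a blow-up of $\FF_0$ or $\QQ_{3,1}$ at a pair of the prescribed type, $\Aut_\R$ of the base acts transitively on such pairs.
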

\begin{proof}
The complex surface $X_\C$ is the blow-up of three points in $\PP^2$. It has thus exactly six $(-1)$-curves, which are arranged as a hexagon on $X$. The antiholomorphic involution $\sigma$ on $X$ acts on the hexagon as symmetry of order $2$. The only possible cases are shown in Figure~\ref{fig:X_2}, $\ref{fig:X_3,S}$, $\ref{fig:X_3,T}$ and $\ref{fig:X_4}$, the action of $\sigma$ indicated by arrows. The second claim follows from the first.
\end{proof}

Let $X$ be a real Del Pezzo surface of degree $6$. There is an exact sequence
\[1\rightarrow K\rightarrow\Aut_\R(X)\stackrel{\rho}\rightarrow\Aut(\Pic(X))\]
and $K$ is of finite index, because the action of $\Aut_\R(X)$ on $\Pic(X)$ is finite by Lemma~\ref{lem:lin alg} and Lemma~\ref{lem aut}.  The image of $\rho(\Aut_\R(X))$ is a subgroup of the dihedral group $D_6$ acting on the hexagon of $(-1)$-curves.


\begin{center}
\begin{minipage}[h]{0.4\textwidth}
\def\svgwidth{.7\textwidth}
\begingroup%
  \makeatletter%
  \providecommand\color[2][]{%
    \errmessage{(Inkscape) Color is used for the text in Inkscape, but the package 'color.sty' is not loaded}%
    \renewcommand\color[2][]{}%
  }%
  \providecommand\transparent[1]{%
    \errmessage{(Inkscape) Transparency is used (non-zero) for the text in Inkscape, but the package 'transparent.sty' is not loaded}%
    \renewcommand\transparent[1]{}%
  }%
  \providecommand\rotatebox[2]{#2}%
  \ifx\svgwidth\undefined%
    \setlength{\unitlength}{778.14509835bp}%
    \ifx\svgscale\undefined%
      \relax%
    \else%
      \setlength{\unitlength}{\unitlength * \real{\svgscale}}%
    \fi%
  \else%
    \setlength{\unitlength}{\svgwidth}%
  \fi%
  \global\let\svgwidth\undefined%
  \global\let\svgscale\undefined%
  \makeatother%
  \begin{picture}(1,1.56080091)%
    \put(0,0){\includegraphics[width=\unitlength,page=1]{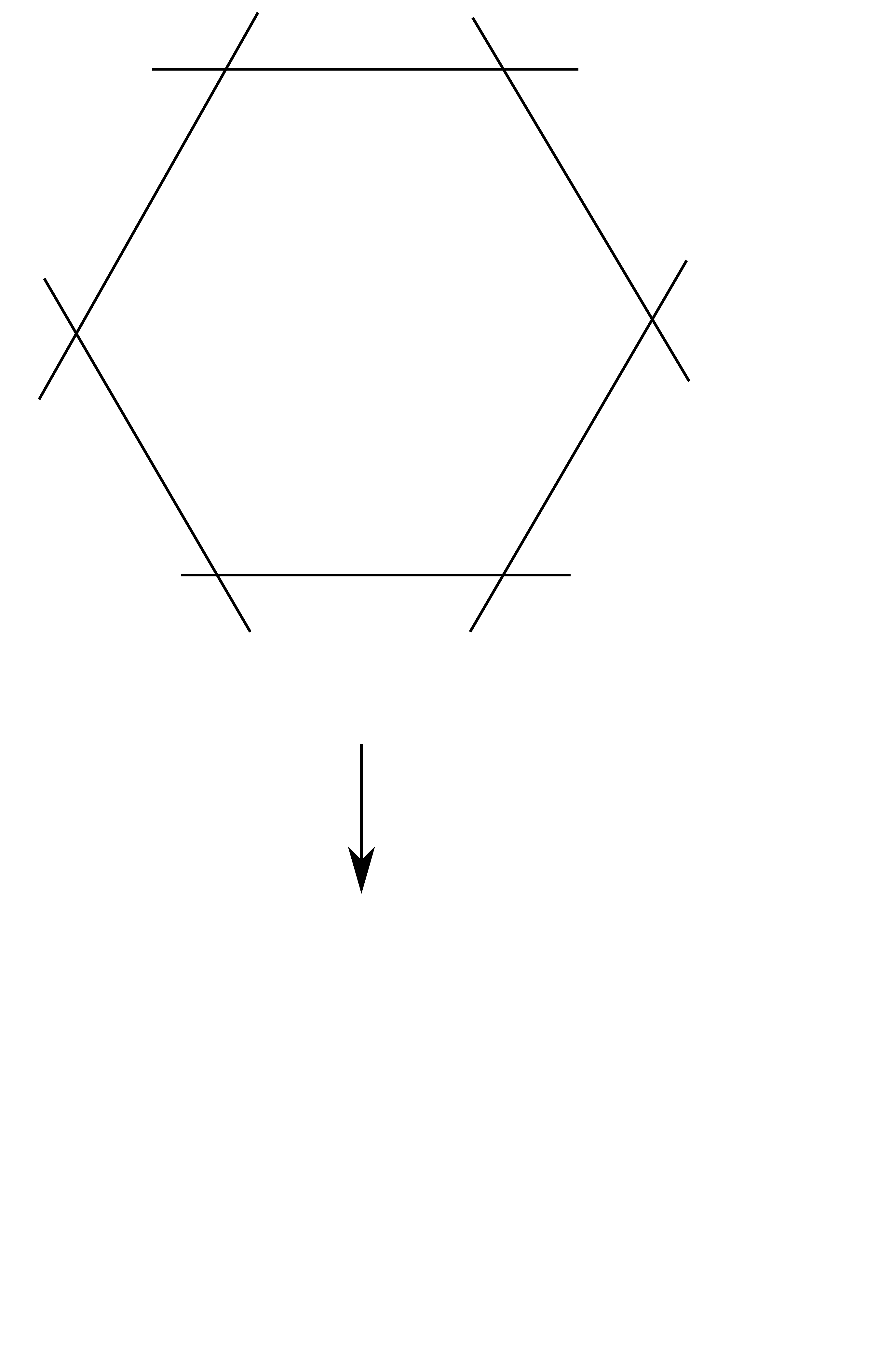}}%
    \put(0.70497319,1.36750687){\color[rgb]{0,0,0}\makebox(0,0)[lb]{\smash{\SB{$E_p$}}}}%
    \put(0.71966013,1.00033334){\color[rgb]{0,0,0}\makebox(0,0)[lb]{\smash{\SB{$f_p$}}}}%
    \put(0.37598571,0.77903276){\color[rgb]{0,0,0}\makebox(0,0)[lb]{\smash{\SB{$\overline{f_p}$}}}}%
    \put(0.04406082,0.98270902){\color[rgb]{0,0,0}\makebox(0,0)[lb]{\smash{\SB{$E_{\bar{p}}$}}}}%
    \put(0.07049732,1.35281994){\color[rgb]{0,0,0}\makebox(0,0)[lb]{\smash{\SB{$f_{\bar{p}}$}}}}%
    \put(0.3524866,1.52955673){\color[rgb]{0,0,0}\makebox(0,0)[lb]{\smash{\SB{$\overline{f_{\bar{p}}}$}}}}%
    \put(0,0){\includegraphics[width=\unitlength,page=2]{DP6_2.pdf}}%
    \put(0.60803936,0.42801719){\color[rgb]{0,0,0}\makebox(0,0)[lb]{\smash{\SB{$p$}}}}%
    \put(0.10868337,0.21358787){\color[rgb]{0,0,0}\makebox(0,0)[lb]{\smash{\SB{$\bar{p}$}}}}%
    \put(0,0){\includegraphics[width=\unitlength,page=3]{DP6_2.pdf}}%
    \put(0.60803936,0.42801719){\color[rgb]{0,0,0}\makebox(0,0)[lb]{\smash{\SB{$p$}}}}%
    \put(0.10868337,0.21358787){\color[rgb]{0,0,0}\makebox(0,0)[lb]{\smash{\SB{$\bar{p}$}}}}%
    \put(0.93482029,0.48088073){\color[rgb]{0,0,0}\makebox(0,0)[lb]{\smash{\SB{$\overline{f_{\bar{p}}}$}}}}%
    \put(0.94436322,0.13785683){\color[rgb]{0,0,0}\makebox(0,0)[lb]{\smash{\SB{$\overline{f_p}$}}}}%
    \put(0.65603922,0.00969849){\color[rgb]{0,0,0}\makebox(0,0)[lb]{\smash{\SB{$f_p$}}}}%
    \put(0.16570477,0.01009218){\color[rgb]{0,0,0}\makebox(0,0)[lb]{\smash{\SB{$f_{\bar{p}}$}}}}%
    \put(0.47291951,0.65959643){\color[rgb]{0,0,0}\makebox(0,0)[lb]{\smash{\SB{$p,\bar{p}$}\\ }}}%
    \put(0.81540063,1.46728657){\color[rgb]{0,0,0}\makebox(0,0)[lb]{\smash{$X_{[2]}$}}}%
    \put(0.91156951,0.61847318){\color[rgb]{0,0,0}\makebox(0,0)[lb]{\smash{$\QQ_{3,1}$}}}%
  \end{picture}%
\endgroup%

\captionof{figure}{$\rk(\Pic(X_{[2]}))=2$}\label{fig:X_2}
\end{minipage}
\begin{minipage}[h]{0.45\textwidth}
\def\svgwidth{1.4\textwidth}
\begingroup%
  \makeatletter%
  \providecommand\color[2][]{%
    \errmessage{(Inkscape) Color is used for the text in Inkscape, but the package 'color.sty' is not loaded}%
    \renewcommand\color[2][]{}%
  }%
  \providecommand\transparent[1]{%
    \errmessage{(Inkscape) Transparency is used (non-zero) for the text in Inkscape, but the package 'transparent.sty' is not loaded}%
    \renewcommand\transparent[1]{}%
  }%
  \providecommand\rotatebox[2]{#2}%
  \ifx\svgwidth\undefined%
    \setlength{\unitlength}{1627.58748693bp}%
    \ifx\svgscale\undefined%
      \relax%
    \else%
      \setlength{\unitlength}{\unitlength * \real{\svgscale}}%
    \fi%
  \else%
    \setlength{\unitlength}{\svgwidth}%
  \fi%
  \global\let\svgwidth\undefined%
  \global\let\svgscale\undefined%
  \makeatother%
  \begin{picture}(1,0.75213411)%
    \put(0,0){\includegraphics[width=\unitlength,page=1]{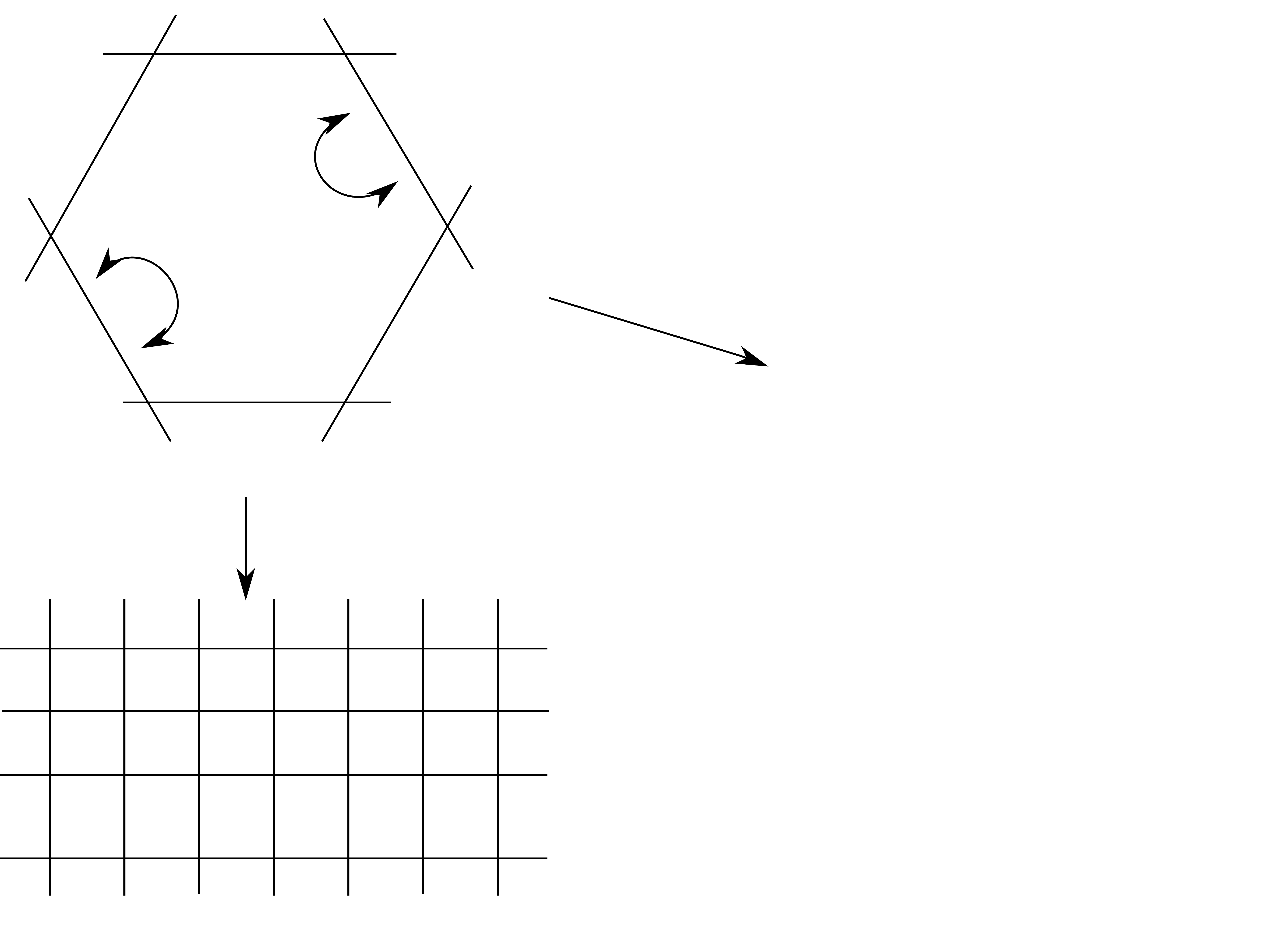}}%
    \put(0.17625669,0.39702473){\color[rgb]{0,0,0}\makebox(0,0)[lb]{\smash{\SB{$\overline{f_q}$}}}}%
    \put(0.17333001,0.73719636){\color[rgb]{0,0,0}\makebox(0,0)[lb]{\smash{\SB{$\overline{f_p}$}}}}%
    \put(0.33331794,0.6565991){\color[rgb]{0,0,0}\makebox(0,0)[lb]{\smash{\SB{$E_p$}}}}%
    \put(0.01926185,0.46794123){\color[rgb]{0,0,0}\makebox(0,0)[lb]{\smash{\SB{$E_q$}}}}%
    \put(0.33339282,0.48340046){\color[rgb]{0,0,0}\makebox(0,0)[lb]{\smash{\SB{$f_p$}}}}%
    \put(0.01633516,0.66267923){\color[rgb]{0,0,0}\makebox(0,0)[lb]{\smash{\SB{$f_q$}}}}%
    \put(0,0){\includegraphics[width=\unitlength,page=2]{DP6_3S.pdf}}%
    \put(0.05873996,0.09633965){\color[rgb]{0,0,0}\makebox(0,0)[lb]{\smash{\SB{$q$}}}}%
    \put(0.29182017,0.20679859){\color[rgb]{0,0,0}\makebox(0,0)[lb]{\smash{\SB{$p$}}}}%
    \put(0.45195998,0.2308132){\color[rgb]{0,0,0}\makebox(0,0)[lb]{\smash{\SB{$\overline{f_p}$}}}}%
    \put(0.45691267,0.07044875){\color[rgb]{0,0,0}\makebox(0,0)[lb]{\smash{\SB{$\overline{f_q}$}}}}%
    \put(0.31635898,0.00463684){\color[rgb]{0,0,0}\makebox(0,0)[lb]{\smash{\SB{$f_p$}}}}%
    \put(0.08125277,0.00546262){\color[rgb]{0,0,0}\makebox(0,0)[lb]{\smash{\SB{$f_q$}}}}%
    \put(0.62153468,0.35152192){\color[rgb]{0,0,0}\makebox(0,0)[lb]{\smash{\SB{$s$}}}}%
    \put(0.64624029,0.43737905){\color[rgb]{0,0,0}\makebox(0,0)[lb]{\smash{\SB{$L$}}}}%
    \put(0.89724302,0.36211345){\color[rgb]{0,0,0}\makebox(0,0)[lb]{\smash{\SB{$\bar{s}$}}}}%
    \put(0,0){\includegraphics[width=\unitlength,page=3]{DP6_3S.pdf}}%
    \put(0.79473626,0.50595714){\color[rgb]{0,0,0}\makebox(0,0)[lb]{\smash{\SB{$r$}}}}%
    \put(0.74798414,0.33553541){\color[rgb]{0,0,0}\makebox(0,0)[lb]{\smash{\SB{$M$}}}}%
    \put(0.84456479,0.45110312){\color[rgb]{0,0,0}\makebox(0,0)[lb]{\smash{\SB{$\bar{L}$}}}}%
    \put(0,0){\includegraphics[width=\unitlength,page=4]{DP6_3S.pdf}}%
    \put(0.46927706,0.3483984){\color[rgb]{0,0,0}\makebox(0,0)[lt]{\begin{minipage}{0.41288103\unitlength}\raggedright \end{minipage}}}%
    \put(0.47910756,0.3483984){\color[rgb]{0,0,0}\makebox(0,0)[lt]{\begin{minipage}{0.39322003\unitlength}\raggedright \end{minipage}}}%
    \put(0.50859906,0.3582289){\color[rgb]{0,0,0}\makebox(0,0)[lt]{\begin{minipage}{0.39322003\unitlength}\raggedright \end{minipage}}}%
    \put(0.21497753,0.32604183){\color[rgb]{0,0,0}\makebox(0,0)[lb]{\smash{\SB{$p,q$}}}}%
    \put(0.50508635,0.51179143){\color[rgb]{0,0,0}\makebox(0,0)[lb]{\smash{\SB{$r,s,\bar{s}$}}}}%
    \put(0,0){\includegraphics[width=\unitlength,page=5]{DP6_3S.pdf}}%
    \put(-0.00129349,0.74875338){\color[rgb]{0,0,0}\makebox(0,0)[lt]{\begin{minipage}{0.019661\unitlength}\raggedright \end{minipage}}}%
    \put(-0.00129349,0.74875338){\color[rgb]{0,0,0}\makebox(0,0)[lt]{\begin{minipage}{0.92406709\unitlength}\raggedright \end{minipage}}}%
    \put(0.44107587,0.28890691){\color[rgb]{0,0,0}\makebox(0,0)[lb]{\smash{$\QQ_{3,1}$}}}%
    \put(0.40175386,0.70506372){\color[rgb]{0,0,0}\makebox(0,0)[lb]{\smash{$X_{[3,\QQ_{3,1}]}$}}}%
    \put(0.82446858,0.57180441){\color[rgb]{0,0,0}\makebox(0,0)[lb]{\smash{$\PP^2$}}}%
  \end{picture}%
\endgroup%

\captionof{figure}{$\rk(\Pic(X_{[3,\QQ_{3,1}]}))=3$}\label{fig:X_3,S}
\end{minipage}
\end{center}
\vskip\baselineskip
\begin{center}
\begin{minipage}[h]{0.42\textwidth}
\def\svgwidth{.85\textwidth}
\begingroup%
  \makeatletter%
  \providecommand\color[2][]{%
    \errmessage{(Inkscape) Color is used for the text in Inkscape, but the package 'color.sty' is not loaded}%
    \renewcommand\color[2][]{}%
  }%
  \providecommand\transparent[1]{%
    \errmessage{(Inkscape) Transparency is used (non-zero) for the text in Inkscape, but the package 'transparent.sty' is not loaded}%
    \renewcommand\transparent[1]{}%
  }%
  \providecommand\rotatebox[2]{#2}%
  \ifx\svgwidth\undefined%
    \setlength{\unitlength}{912.07812692bp}%
    \ifx\svgscale\undefined%
      \relax%
    \else%
      \setlength{\unitlength}{\unitlength * \real{\svgscale}}%
    \fi%
  \else%
    \setlength{\unitlength}{\svgwidth}%
  \fi%
  \global\let\svgwidth\undefined%
  \global\let\svgscale\undefined%
  \makeatother%
  \begin{picture}(1,1.23332594)%
    \put(0,0){\includegraphics[width=\unitlength,page=1]{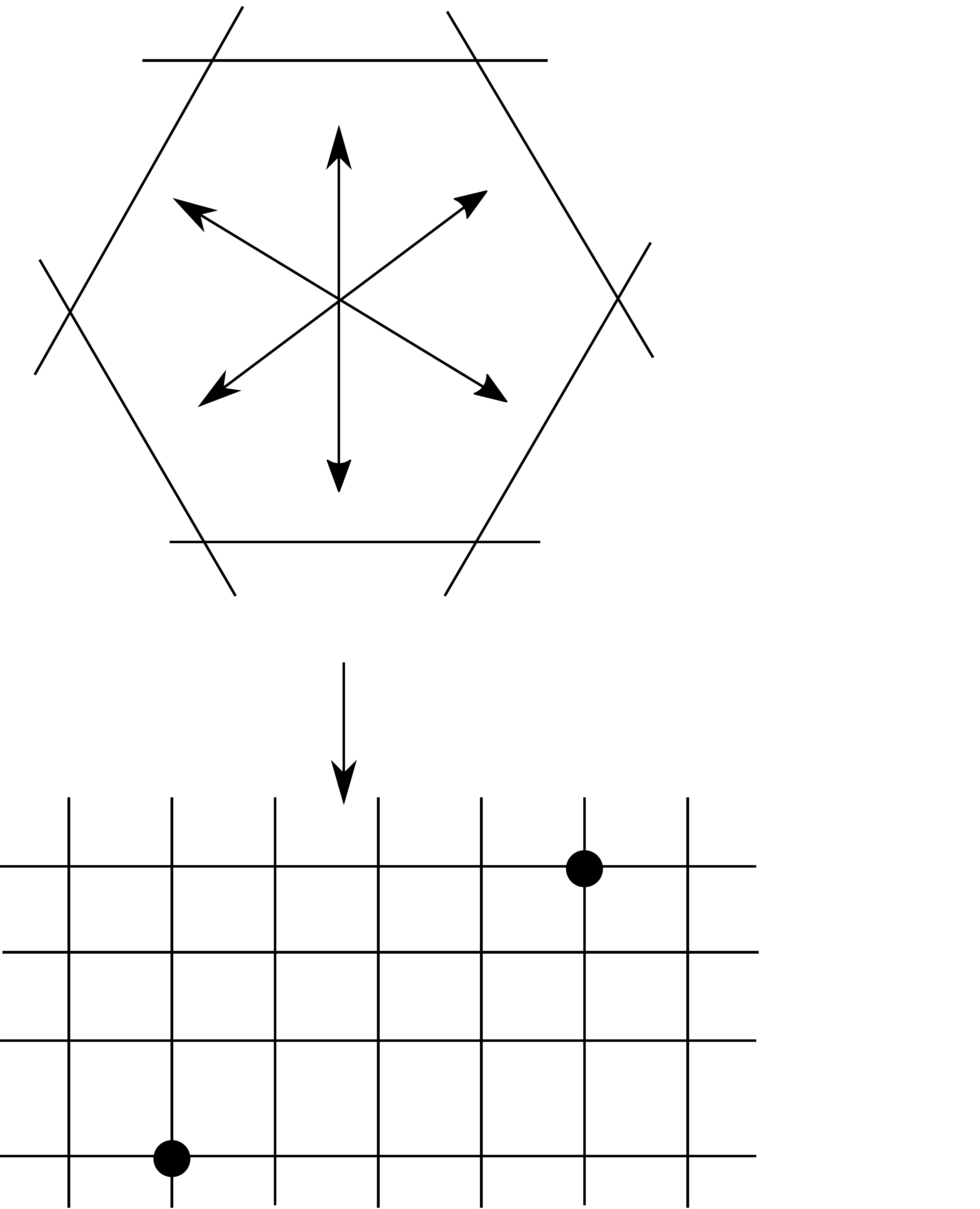}}%
    \put(0.10525415,0.10651336){\color[rgb]{0,0,0}\makebox(0,0)[lb]{\smash{\SB{$\bar{p}$}}}}%
    \put(0.52376469,0.28694899){\color[rgb]{0,0,0}\makebox(0,0)[lb]{\smash{\SB{$p$}}}}%
    \put(0.80694842,0.34208214){\color[rgb]{0,0,0}\makebox(0,0)[lb]{\smash{\SB{$g_p$}}}}%
    \put(0.80193631,0.03884996){\color[rgb]{0,0,0}\makebox(0,0)[lb]{\smash{\SB{$\overline{g_p}$}}}}%
    \put(0.55383729,-0.08394643){\color[rgb]{0,0,0}\makebox(0,0)[lb]{\smash{\SB{$f_p$}}}}%
    \put(0.12530255,-0.08394643){\color[rgb]{0,0,0}\makebox(0,0)[lb]{\smash{\SB{$\overline{f_p}$}}}}%
    \put(0.58390989,1.07886113){\color[rgb]{0,0,0}\makebox(0,0)[lb]{\smash{\SB{$E_p$}}}}%
    \put(0.04260287,0.75056844){\color[rgb]{0,0,0}\makebox(0,0)[lb]{\smash{\SB{$E_{\bar{p}}$}}}}%
    \put(0.31576248,0.59769935){\color[rgb]{0,0,0}\makebox(0,0)[lb]{\smash{\SB{$\overline{g_p}$}}}}%
    \put(0.60145225,0.77312291){\color[rgb]{0,0,0}\makebox(0,0)[lb]{\smash{\SB{$f_p$}}}}%
    \put(0.31576243,1.20666973){\color[rgb]{0,0,0}\makebox(0,0)[lb]{\smash{\SB{$g_p$}}}}%
    \put(0.04761497,1.04878851){\color[rgb]{0,0,0}\makebox(0,0)[lb]{\smash{\SB{$\overline{f_p}$}}}}%
    \put(0.38091975,0.50497546){\color[rgb]{0,0,0}\makebox(0,0)[lb]{\smash{\SB{$p,\bar{p}$}}}}%
    \put(0.71923664,1.16190648){\color[rgb]{0,0,0}\makebox(0,0)[lb]{\smash{$X_{[3,\FF_0]}$}}}%
    \put(0.81697264,0.44629925){\color[rgb]{0,0,0}\makebox(0,0)[lb]{\smash{$\FF_0$}}}%
  \end{picture}%
\endgroup%
\vspace{3mm}
\captionof{figure}{$\rk(\Pic(X_{[3,\FF_0]}))=3$}\label{fig:X_3,T}
\end{minipage}
\begin{minipage}[h]{0.43\textwidth}
\def\svgwidth{1.15\textwidth}
\begingroup%
  \makeatletter%
  \providecommand\color[2][]{%
    \errmessage{(Inkscape) Color is used for the text in Inkscape, but the package 'color.sty' is not loaded}%
    \renewcommand\color[2][]{}%
  }%
  \providecommand\transparent[1]{%
    \errmessage{(Inkscape) Transparency is used (non-zero) for the text in Inkscape, but the package 'transparent.sty' is not loaded}%
    \renewcommand\transparent[1]{}%
  }%
  \providecommand\rotatebox[2]{#2}%
  \ifx\svgwidth\undefined%
    \setlength{\unitlength}{1319.30099848bp}%
    \ifx\svgscale\undefined%
      \relax%
    \else%
      \setlength{\unitlength}{\unitlength * \real{\svgscale}}%
    \fi%
  \else%
    \setlength{\unitlength}{\svgwidth}%
  \fi%
  \global\let\svgwidth\undefined%
  \global\let\svgscale\undefined%
  \makeatother%
  \begin{picture}(1,0.92788838)%
    \put(0,0){\includegraphics[width=\unitlength,page=1]{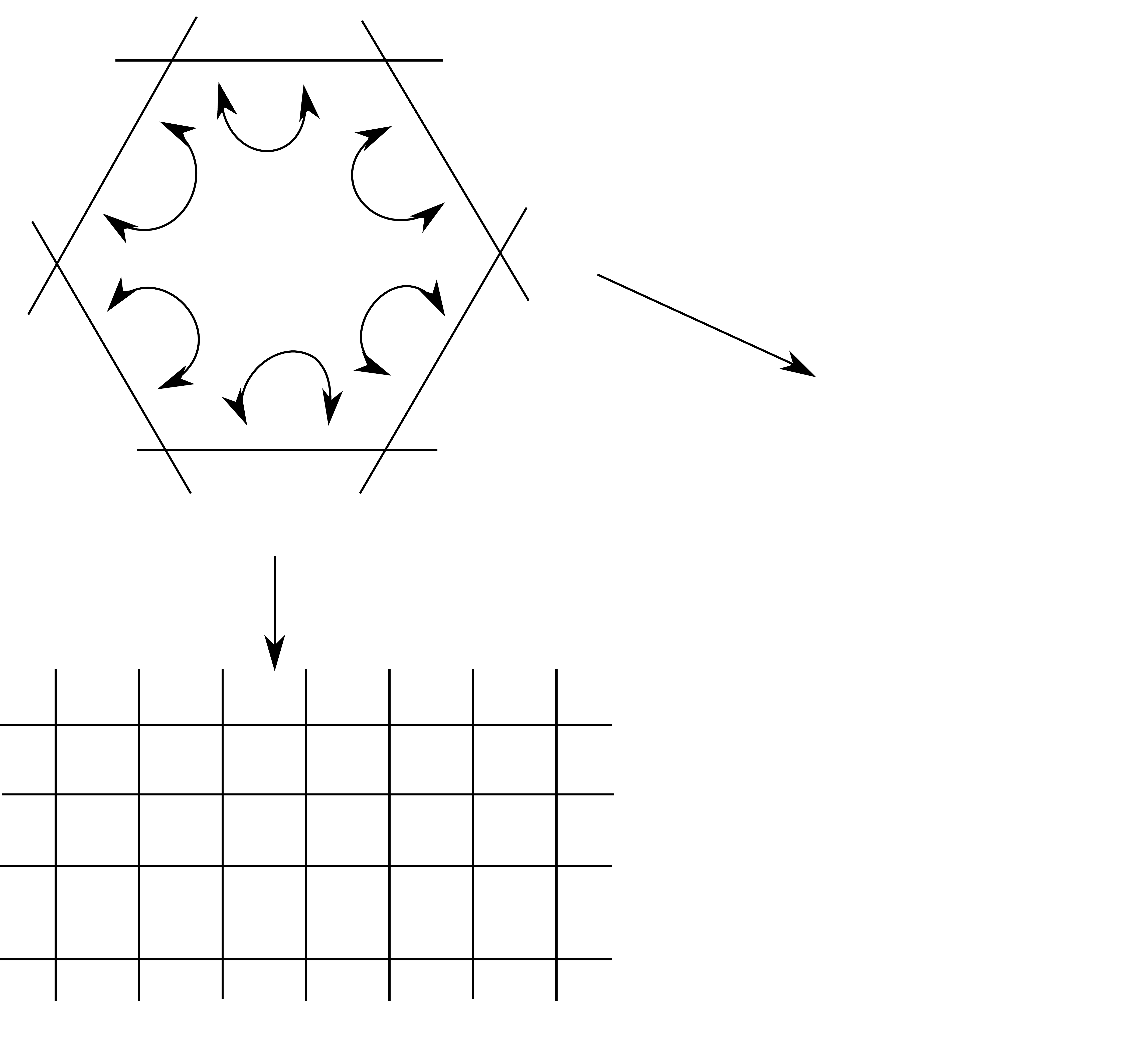}}%
    \put(0.21744331,0.48979913){\color[rgb]{0,0,0}\makebox(0,0)[lb]{\smash{\SB{$g_q$}}}}%
    \put(0.21383274,0.90946006){\color[rgb]{0,0,0}\makebox(0,0)[lb]{\smash{\SB{$g_p$}}}}%
    \put(0.41120572,0.81002931){\color[rgb]{0,0,0}\makebox(0,0)[lb]{\smash{\SB{$E_p$}}}}%
    \put(0.02376285,0.57728698){\color[rgb]{0,0,0}\makebox(0,0)[lb]{\smash{\SB{$E_q$}}}}%
    \put(0.41129809,0.59635864){\color[rgb]{0,0,0}\makebox(0,0)[lb]{\smash{\SB{$f_p$}}}}%
    \put(0.02015227,0.81753021){\color[rgb]{0,0,0}\makebox(0,0)[lb]{\smash{\SB{$f_q$}}}}%
    \put(0,0){\includegraphics[width=\unitlength,page=2]{DP6_4.pdf}}%
    \put(0.07246597,0.11885173){\color[rgb]{0,0,0}\makebox(0,0)[lb]{\smash{\SB{$q$}}}}%
    \put(0.36001099,0.25512206){\color[rgb]{0,0,0}\makebox(0,0)[lb]{\smash{\SB{$p$}}}}%
    \put(0.55757132,0.28474827){\color[rgb]{0,0,0}\makebox(0,0)[lb]{\smash{\SB{$g_p$}}}}%
    \put(0.56368133,0.0869108){\color[rgb]{0,0,0}\makebox(0,0)[lb]{\smash{\SB{$g_q$}}}}%
    \put(0.39028388,0.00572034){\color[rgb]{0,0,0}\makebox(0,0)[lb]{\smash{\SB{$f_p$}}}}%
    \put(0.10023944,0.0067391){\color[rgb]{0,0,0}\makebox(0,0)[lb]{\smash{\SB{$f_q$}}}}%
    \put(0.64628515,0.47418358){\color[rgb]{0,0,0}\makebox(0,0)[lb]{\smash{\SB{$r_1$}}}}%
    \put(0.68034734,0.55697617){\color[rgb]{0,0,0}\makebox(0,0)[lb]{\smash{\SB{$L_2$}}}}%
    \put(0,0){\includegraphics[width=\unitlength,page=3]{DP6_4.pdf}}%
    \put(0.95140692,0.48124854){\color[rgb]{0,0,0}\makebox(0,0)[lb]{\smash{\SB{$r_2$}}}}%
    \put(0.82352019,0.65888883){\color[rgb]{0,0,0}\makebox(0,0)[lb]{\smash{\SB{$r_3$}}}}%
    \put(0.7811617,0.46906846){\color[rgb]{0,0,0}\makebox(0,0)[lb]{\smash{\SB{$L_3$}}}}%
    \put(0.86967396,0.58270664){\color[rgb]{0,0,0}\makebox(0,0)[lb]{\smash{\SB{$L_1$}}}}%
    \put(0,0){\includegraphics[width=\unitlength,page=4]{DP6_4.pdf}}%
    \put(0.57893495,0.42981009){\color[rgb]{0,0,0}\makebox(0,0)[lt]{\begin{minipage}{0.50936064\unitlength}\raggedright \end{minipage}}}%
    \put(0.59106259,0.42981009){\color[rgb]{0,0,0}\makebox(0,0)[lt]{\begin{minipage}{0.48510537\unitlength}\raggedright \end{minipage}}}%
    \put(0.62744549,0.44193773){\color[rgb]{0,0,0}\makebox(0,0)[lt]{\begin{minipage}{0.48510537\unitlength}\raggedright \end{minipage}}}%
    \put(0.26771165,0.39972993){\color[rgb]{0,0,0}\makebox(0,0)[lb]{\smash{\SB{$p,q$}}}}%
    \put(0.57553634,0.68822189){\color[rgb]{0,0,0}\makebox(0,0)[lb]{\smash{\SB{$r_1,r_2,r_3$}}}}%
    \put(0.56840307,0.34340519){\color[rgb]{0,0,0}\makebox(0,0)[lb]{\smash{$\FF_0$}}}%
    \put(0.47521278,0.86733013){\color[rgb]{0,0,0}\makebox(0,0)[lb]{\smash{$X_{[4]}$}}}%
    \put(0.82308339,0.74180312){\color[rgb]{0,0,0}\makebox(0,0)[lb]{\smash{$\PP^2$}}}%
  \end{picture}%
\endgroup%

\captionof{figure}{$\rk(\Pic(X_{[4]}))=4$}\label{fig:X_4}
\end{minipage}
\end{center}


\subsection{The surfaces obtained by blowing up the sphere}\label{ssec:DP6S}
Blowing up the sphere in a pair of non-real conjugate points $p,\bar{p}$, we obtain the del Pezzo surface $X_{[2]}$ with $\rk(\Pic(X_{[2]}))=2$. The lift of the antiholomorphic involution is indicated in Figure~\ref{fig:X_2}. \par

\begin{Rmk}\label{rmk:O2}
For a non-real point $p\in \QQ_{3,1}$, we denote by $\Aut_\R(\QQ_{3,1},p,\bar{p})\subset\Aut_\R(\QQ_{3,1})$ the subgroup of $\Aut_\R(\QQ_{3,1})$ that fixes both points $p$ and $\bar{p}$. Choosing $p=([1:0],[0:1])$, it is isomorphic to the group $\{(d,\bar{d})\in\mathrm{PGL}_2(\C)\times\mathrm{PGL}_2(\C)\mid d\ \text{diagonal}\}$ \cite[Lemma 4.5]{R15}. 
Conjugating with the real birational map $\QQ_{3,1}\dashrightarrow(\PP^1\times\PP^1,\sigma_S)$ from Remark~\ref{rmk:real part}, we obtain that 
\[\Aut_\R(\QQ_{3,1},p,\bar{p})\simeq\{(d,\bar{d})\in\mathrm{PGL}_2(\C)\times\mathrm{PGL}_2(\C)\mid d\ \text{diagonal}\}\simeq\R_{>0}\times\mathrm{SO}_2(\R).\]
\end{Rmk}

\begin{Prop}\label{prop:X_2}\item
\begin{enumerate}
\item\label{X_2 1} The surface $X_{[2]}$ is isomorphic to 
\[X_{[2]}\simeq\{([w:x:y:z],[u:v])\in\PP^3\times\PP^1\mid wz=x^2+y^2,uz=vw\}.\]
\item\label{X_2 2} There is an exact sequence
\[1\rightarrow\ker(\rho)\rightarrow\Aut_\R(X_{[2]})\stackrel{\rho}\rightarrow \Z/2\Z\times\Z/2\Z\rightarrow 1\]
where $\ker(\rho)\simeq\Aut_\R(\QQ_{3,1},p,\bar{p})\simeq\R_{>0}\times\mathrm{SO}_2(\R)$ and $\Z/2\Z\times\Z/2\Z\simeq\langle\rho(\alpha_1)\rangle\times\langle\rho(\alpha_2)\rangle$, where
\[\alpha_1\colon([w:x:y:z],[u:v])\mapsto([z:-x:y:w],[v:u]),\]
\[\alpha_2\colon([w:x:y:z],[u:v])\mapsto([w:-x:y:z],[u:v]),\]
and $\rho(\alpha_1)$ is a rotation of order $2$ and $\rho(\alpha_2)$ is a reflection, both exchanging $E_p$ and $E_{\bar p}$.
\item\label{X_2 3} The automorphisms $\alpha_1,\alpha_2$ are lifts of elements of $\Aut(\QQ_{3,1},\{p,\bar{p}\})$. In particular, the pair $(X_{[2]},\Aut_\R(X_{[2]}))$ is not a minimal pair and the contraction morphism $X_{[2]}\rightarrow \QQ_{3,1}$ induces an embedding $\Aut_\R(X_{[2]})\hookrightarrow\Aut(\QQ_{3,1})$.
\end{enumerate}
\end{Prop}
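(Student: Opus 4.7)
Part (\ref{X_2 1}) is the content of Lemma~\ref{lem:X_2}, so the work lies entirely in parts (\ref{X_2 2}) and (\ref{X_2 3}).

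For part (\ref{X_2 2}) I handle the kernel and image of $\rho$ separately. Any element of $\ker(\rho)$ acts trivially on the hexagon of six $(-1)$-curves of $X_{[2],\C}$ pictured in Figure~\ref{fig:X_2}, so it preserves each of $E_p$ and $E_{\bar p}$ individually and therefore descends through the contraction $X_{[2]} \to \QQ_{3,1}$ to an automorphism of $\QQ_{3,1}$ fixing both $p$ and $\bar p$ (and preserving each ruling of $\QQ_{3,1,\C}$, i.e.\ not interchanging the two factors of $\PP^1\times\PP^1$). Conversely, any such element of $\Aut_\R(\QQ_{3,1},p,\bar p)$ lifts to $X_{[2]}$ and acts trivially on the hexagon. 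Together with Remark~\ref{rmk:O2}, this identifies $\ker(\rho)$ with $\R_{>0}\times\SO_2(\R)$.

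For the image, one verifies by direct substitution that $\alpha_1$ and $\alpha_2$ preserve both defining equations $wz=x^2+y^2$ and $uz=vw$ and are involutions commuting with complex conjugation, so both are real regular automorphisms of $X_{[2]}$. Each descends to an involution of $\QQ_{3,1}$ interchanging $p$ and $\bar p$, and hence its lift interchanges $E_p$ and $E_{\bar p}$. Inspecting their effect on the rulings via the factorisation $\QQ_{3,1,\C}\simeq\PP^1\times\PP^1$ of Remark~\ref{rmk:real part}, one sees that $\alpha_1$ preserves each ruling while $\alpha_2$ swaps the two; consequently $\rho(\alpha_1)$ acts as the diagonal half-turn of the hexagon and $\rho(\alpha_2)$ as a reflection, so $\langle\rho(\alpha_1),\rho(\alpha_2)\rangle$ is a Klein four-group. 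To see this exhausts $\mathrm{im}(\rho)$, note that any element of $\mathrm{im}(\rho)$ must centralise the antiholomorphic involution $\sigma$ shown in Figure~\ref{fig:X_2} inside the hexagon symmetry group $D_6$; a direct check shows $\sigma$ is the reflection swapping both adjacent pairs of fibres while exchanging $E_p$ and $E_{\bar p}$, whose centraliser in $D_6$ has order exactly four.

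Part (\ref{X_2 3}) is then immediate from the formulas: $\alpha_1$ and $\alpha_2$ extend to linear involutions of $\PP^3$ preserving the equation $wz=x^2+y^2$ of $\QQ_{3,1}$, and so descend to elements of $\Aut_\R(\QQ_{3,1})$ permuting $\{p,\bar p\}$. Combined with part (\ref{X_2 2}), every element of $\Aut_\R(X_{[2]})$ preserves the set $\{E_p,E_{\bar p}\}$ (kernel elements fix each summand; the images of $\alpha_1,\alpha_2$ swap them), so the contraction $X_{[2]}\to\QQ_{3,1}$ is $\Aut_\R(X_{[2]})$-equivariant. This simultaneously establishes non-minimality of the pair and yields the embedding $\Aut_\R(X_{[2]})\hookrightarrow\Aut_\R(\QQ_{3,1})$. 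The main technical hurdle I anticipate is identifying $\sigma$ as the correct element of $D_6$ from the hexagonal incidence pattern in Figure~\ref{fig:X_2}; once its reflection type is pinned down, the centraliser computation and the matching with $\rho(\alpha_1)$ and $\rho(\alpha_2)$ are routine bookkeeping.
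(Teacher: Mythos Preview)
Your proof is correct and follows essentially the same approach as the paper: identify $\ker(\rho)$ via descent to $\Aut_\R(\QQ_{3,1},p,\bar p)$, bound the image by the constraint that real automorphisms commute with the antiholomorphic involution on the hexagon, and realise the bound by exhibiting $\alpha_1,\alpha_2$. Your phrasing of the bound as ``centraliser of $\sigma$ in $D_6$'' is slightly more streamlined than the paper's ``preserving $\{E_p,E_{\bar p}\}$ and respecting $\sigma$'', but these are the same constraint, and your identification of $\sigma$ as a reflection (rather than the half-turn) on the hexagon is indeed the key point and is correct.
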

\begin{proof}
(\ref{X_2 1}) is Lemma~\ref{lem:X_2}. Let $S\subset\PP^3$ be the surface given by $wz=x^2+z^2$ and $\psi\colon S\rightarrow\QQ_{3,1}$, $\psi\colon[w:x:y:z]\mapsto[w+z:2x:2y:w-z]$. Any automorphism of $X_{[2]}$ preserves the hexagon in Figure~\ref{fig:X_2} and so $\rho(\Aut_\R(X_{[2]}))$ is contained in $D_6$. The action of the antiholomorphic involution indicated in Figure~\ref{fig:X_2} implies that any element of $\Aut_\R(X_{[2]})$ preserves the set $\{E_p,E_{\bar{p}}\}$. The kernel of $\rho$ is contained in the subgroup of $\Aut_\R(X_{[2]})$ fixing $E_p$ and $E_{\bar{p}}$. Any such automorphism descends to an automorphism of $S$ fixing both points $p,\bar{p}$. Any element of $\Aut_\R(\QQ_{3,1},p,\bar{p})$ also fixes $f_p$ and $f_{\bar p}$ and thus lifts to an element of $\ker(\rho)$. It follows that $\ker(\rho)\simeq\Aut_\R(\QQ_{3,1},p,\bar{p})\simeq \R_{>0}\times\mathrm{SO}_2(\R)$ (see Remark~\ref{rmk:O2} for the isomorphisms).\par
The only non-trivial elements of $D_6$ preserving $\{E_p, E_{\bar p}\}$ and respecting the action of the antiholomorphic involution are the rotation of order $2$ and two reflections, one exchanging $E_p,E_{\bar p}$ and one fixing them. The automorphisms
\[\alpha_1\colon([w:x:y:z],[u:v])\mapsto([z:-x:y:w],[v:u])\]
and
\[\alpha_2\colon([w:x:y:z],[u:v])\mapsto([w:-x:y:z],[u:v])\]
are the lifts of automorphisms of $S$ exchanging $p$ and $\bar{p}$ and hence exchange $E_p$ and $E_{\bar{p}}$. In fact, via the $\R$-isomorphism $S\stackrel{\varphi\psi^{-1}}\longrightarrow(\PP^1\times\PP^1,\sigma_S)$, where $\varphi$ is as in Remark~\ref{rmk:real part}, $\alpha_1$ and $\alpha_2$ are conjugate to
\begin{align*}\varphi\psi^{-1}\alpha_1\psi\varphi^{-1}\colon&([x_0:x_1],[y_0:y_1])\mapsto([x_1:x_0],[y_1:y_0])\\
\varphi\psi^{-1}\alpha_2\psi\varphi^{-1}\colon&([x_0:x_1],[y_0:y_1])\mapsto([y_0:y_1],[x_0:x_1])
\end{align*}
and $\varphi\psi^{-1}(p)=([0:1],[1:0])$. This description implies that $\rho(\alpha_1)$ is a rotation of order $2$ and $\rho(\alpha_2)$ is the reflection exchanging $E_p,E_{\bar p}$. We have $\alpha_1^2=\alpha_2^2=(\alpha_1\alpha_2)^2=\Id$ and hence $\rho\colon\Aut_\R(X_{[4]})\rightarrow\Z/2\Z\times\Z/2\Z$ is surjective and has a section. On the other hand, it follows that every element of $\Aut_\R(X_{[2]})$ is the lift of an element of $\Aut_\R(\QQ_{3,1})$, which yields (\ref{X_2 3}).
\end{proof}

Blowing up the sphere in two real points $p,q$, we obtain the del Pezzo surface $X_{[3,\QQ_{3,1}]}$ with $\rk(\Pic(X_{[3,\QQ_{3,1}]}))=3$. The lift of the antiholomorphic involution is indicated in Figure~\ref{fig:X_3,S}. Contracting two non-real conjugate $(-1)$-curves and one real $(-1)$-curve that are pairwise disjoint, we obtain a birational morphism $X_{[3,\QQ_{3,1}]}\rightarrow\PP^2$ which is the blow-up of a real point and a pair of non-real conjugate points on $\PP^2$. 

\begin{Prop}\label{prop:DP63S}\item
\begin{enumerate}
\item\label{DP63S 1} The surface $X_{[3,\QQ_{3,1}]}$ is isomorphic to 
\[\{([x_0:x_1:x_2],[y_0:y_1:y_2]\in\PP^2\times\PP^2\mid x_0y_0=x_1y_2+x_2y_1,x_1y_1=x_2y_2\}.\] 
\item\label{DP63S 2} There is a split exact sequence
\[1\rightarrow\ker(\rho)\rightarrow\Aut_\R(X_{[3,\QQ_{3,1}]})\stackrel{\rho}\rightarrow \Z/2\Z\times\Z/2\Z\rightarrow 1,\]
where $\ker(\rho)\simeq\mathrm{SO}_2(\R)$ and $\Z/2\Z\times\Z/2\Z\simeq\langle\rho(\alpha_1)\rangle\times\langle\rho(\alpha_2)\rangle$, where
\begin{align*}&\alpha_1\colon([x_0:x_1:x_2],[y_0:y_1:y_2])\mapsto([y_0:y_1:y_2],[x_0:x_1:x_2]),\\
&\alpha_2\colon([x_0:x_1:x_2],[y_0:y_1:y_2])\mapsto([x_0:x_2:x_1],[y_0:y_2:y_1]),
\end{align*}
and $\rho(\alpha_1)$ is a rotation of order $2$ and $\rho(\alpha_2)$ a reflection fixing $E_p,E_q$.
\item\label{DP63S 3} The automorphisms $\alpha_1,\alpha_2$ are lifts of elements of $\Aut(\QQ_{3,1})$. In particular, $(X_{[3,\QQ_{3,1}]},\Aut_\R(X_{[3,\QQ_{3,1}]}))$ is not a minimal pair, and the contraction $X_{[3,\QQ_{3,1}]}\rightarrow \QQ_{3,1}$ induces an embedding $\Aut_\R(X_{[3,\QQ_{3,1}]})\hookrightarrow\Aut(\QQ_{3,1})$.
\end{enumerate}
\end{Prop}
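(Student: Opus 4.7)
The strategy follows the proof of Proposition~\ref{prop:X_2}. For part~(\ref{DP63S 1}), I would verify directly on the defining equations that
\[Y:=\{([x_0{:}x_1{:}x_2],[y_0{:}y_1{:}y_2])\in\PP^2\times\PP^2\mid x_0y_0=x_1y_2+x_2y_1,\ x_1y_1=x_2y_2\}\]
is smooth and that the first projection $\pi_1\colon Y\to\PP^2$ realises $Y$ as the blow-up at the real point $[1:0:0]$ and the non-real conjugate pair $[0:1:\pm{\bf i}]$, so $Y$ is a smooth real del Pezzo surface of degree~$6$. The six $(-1)$-curves on $Y$ are the three exceptional divisors of $\pi_1$ together with the strict transforms of the three lines joining pairs of the blown up points; tracking the action of complex conjugation on them shows $Y$ has exactly two real $(-1)$-curves (the exceptional divisor over $[1:0:0]$ and the strict transform of $x_0=0$) and two non-real conjugate pairs, in the hexagon configuration of Figure~\ref{fig:X_3,S}. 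By Lemma~\ref{lem:dP class} this identifies $Y$ with $X_{[3,\QQ_{3,1}]}$.

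For part~(\ref{DP63S 2}), the action of $\Aut_\R(X_{[3,\QQ_{3,1}]})$ on $\Pic$ factors through the symmetry group $D_6$ of the hexagon of $(-1)$-curves and must commute with the element of $D_6$ induced by the antiholomorphic involution displayed in Figure~\ref{fig:X_3,S}; a direct inspection shows that the centraliser of that element in $D_6$ is $\Z/2\Z\times\Z/2\Z$. An element of $\ker\rho$ fixes every $(-1)$-curve and therefore descends, via the contraction of $E_p,E_q$, to a real automorphism of $\QQ_{3,1}$ fixing $p,q$ together with each of the two non-real rulings through each of these points; the subgroup of $\Aut_\R(\QQ_{3,1})=\mathrm{PO}(3,1)$ of such automorphisms is the group of rotations of $\mathbb{S}^2=\QQ_{3,1}(\R)$ about the axis through $p$ and $q$, which is isomorphic to $\mathrm{SO}_2(\R)$.

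To construct a splitting, I would verify on the equations of $Y$ that $\alpha_1,\alpha_2$ are well-defined involutions with $(\alpha_1\alpha_2)^2=\mathrm{Id}$, and identify their images in $D_6$ by tracking their effect on the six $(-1)$-curves: $\alpha_1$ swaps the two projections $\pi_1$ and $\pi_2$ and hence swaps $E_p$ with $E_q$, giving the rotation of order~$2$, while $\alpha_2$ preserves each projection but exchanges the two non-real points $[0:1:\pm{\bf i}]$ blown up by $\pi_1$, giving the reflection that fixes $E_p$ and $E_q$. This yields the claimed section $\Z/2\Z\times\Z/2\Z\to\Aut_\R(X_{[3,\QQ_{3,1}]})$ and completes~(\ref{DP63S 2}).

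For part~(\ref{DP63S 3}), the involutions $\alpha_1,\alpha_2$ both preserve the set $\{E_p,E_q\}$ of real exceptional divisors and therefore descend along $X_{[3,\QQ_{3,1}]}\to\QQ_{3,1}$ to real automorphisms of $\QQ_{3,1}$; combined with the description of $\ker\rho$ this gives the embedding $\Aut_\R(X_{[3,\QQ_{3,1}]})\hookrightarrow\Aut_\R(\QQ_{3,1})$, and the $G$-equivariance of the non-isomorphic contraction shows that the pair is not minimal. The main obstacle I anticipate is in part~(\ref{DP63S 1}): cleanly matching the real structure on $Y$ inherited from $\PP^2\times\PP^2$ to that of Figure~\ref{fig:X_3,S}, which requires careful Galois-equivariant bookkeeping of all six $(-1)$-curves rather than just the two real ones.
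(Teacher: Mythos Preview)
Your approach is correct and parallels the paper's, but with two methodological differences worth noting.

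For part~(\ref{DP63S 1}), the paper does not verify the equations directly. Instead it constructs \emph{two} contractions $\varepsilon,\eta\colon X_{[3,\QQ_{3,1}]}\to\PP^2$, each blowing down a different triple of pairwise disjoint $(-1)$-curves to the same three points $r=[1{:}0{:}0]$, $s=[0{:}1{:}{\bf i}]$, $\bar s$, and then embeds $X_{[3,\QQ_{3,1}]}$ via $\varepsilon\times\eta$ into $\PP^2\times\PP^2$. The equations arise from the explicit quadratic involution $\eta\varepsilon^{-1}\colon[x_0{:}x_1{:}x_2]\dashmapsto[x_1^2+x_2^2:x_0x_2:x_0x_1]$. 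This construction has the payoff that $\alpha_1$ is visibly the lift of $\eta\varepsilon^{-1}$, so its action on the hexagon (exchanging $E_p,E_q$ \emph{and} $f_p,f_q$, hence the rotation rather than the reflection swapping $E_p,E_q$) is immediate; in your approach you should track one more $(-1)$-curve under $\alpha_1$ to distinguish the rotation from the reflection.

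For $\ker(\rho)$, the paper descends to $\PP^2$ via $\varepsilon$ rather than to $\QQ_{3,1}$: the kernel is identified with the subgroup of $\mathrm{PGL}_3(\R)$ fixing $r,s,\bar s$, which one sees is $\mathrm{SO}_2(\R)$ directly. Your descent to $\QQ_{3,1}$ is equally valid but requires the extra observation that fixing each of the four non-real rulings (not just $p,q$) rules out reflections in $\mathrm{PO}(3,1)$; you state this, so the argument goes through.
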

\begin{minipage}{1\columnwidth}
\centering
\def\svgwidth{0.5\textwidth}
\begingroup%
  \makeatletter%
  \providecommand\color[2][]{%
    \errmessage{(Inkscape) Color is used for the text in Inkscape, but the package 'color.sty' is not loaded}%
    \renewcommand\color[2][]{}%
  }%
  \providecommand\transparent[1]{%
    \errmessage{(Inkscape) Transparency is used (non-zero) for the text in Inkscape, but the package 'transparent.sty' is not loaded}%
    \renewcommand\transparent[1]{}%
  }%
  \providecommand\rotatebox[2]{#2}%
  \ifx\svgwidth\undefined%
    \setlength{\unitlength}{276.51041587bp}%
    \ifx\svgscale\undefined%
      \relax%
    \else%
      \setlength{\unitlength}{\unitlength * \real{\svgscale}}%
    \fi%
  \else%
    \setlength{\unitlength}{\svgwidth}%
  \fi%
  \global\let\svgwidth\undefined%
  \global\let\svgscale\undefined%
  \makeatother%
  \begin{picture}(1,0.96349711)%
    \put(0,0){\includegraphics[width=\unitlength,page=1]{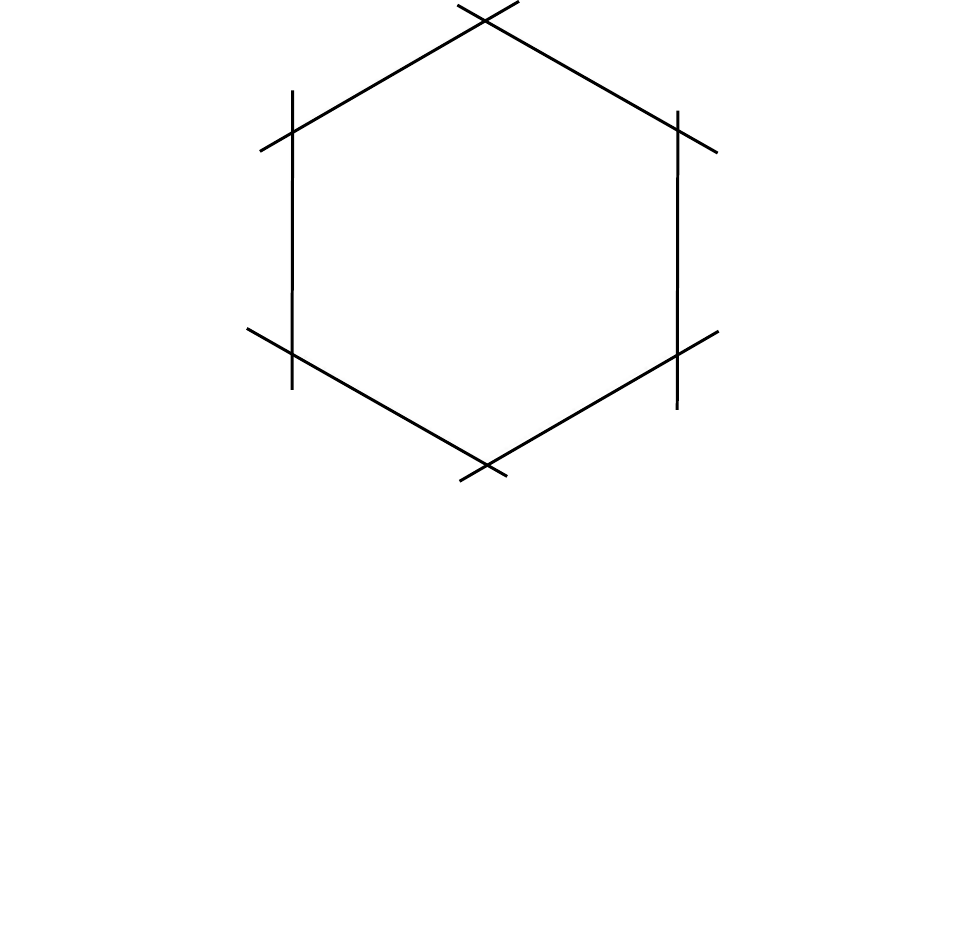}}%
    \put(0.60497763,0.90457009){\color[rgb]{0,0,0}\makebox(0,0)[lb]{\smash{\SB{$E_p$}}}}%
    \put(0.71279448,0.70509753){\color[rgb]{0,0,0}\makebox(0,0)[lb]{\smash{\SB{$f_p$}}}}%
    \put(0.60497763,0.49997415){\color[rgb]{0,0,0}\makebox(0,0)[lb]{\smash{\SB{$\overline{f_q}$}}}}%
    \put(0.41457804,0.90457009){\color[rgb]{0,0,0}\makebox(0,0)[rb]{\smash{\SB{$\overline{f_p}$}}}}%
    \put(0.41457804,0.49997415){\color[rgb]{0,0,0}\makebox(0,0)[rb]{\smash{\SB{$E_q$}}}}%
    \put(0.29507917,0.70509753){\color[rgb]{0,0,0}\makebox(0,0)[rb]{\smash{\SB{$f_q$}}}}%
    \put(0.39061339,0.8997852){\color[rgb]{0,0,0}\makebox(0,0)[lb]{\smash{}}}%
    \put(0,0){\includegraphics[width=\unitlength,page=2]{drawing3.pdf}}%
    \put(0.71264894,0.40097099){\color[rgb]{0,0,0}\makebox(0,0)[lb]{\smash{\SB{$E_q$, $f_p$, $\overline{f_p}$}}}}%
    \put(0.31230615,0.40011953){\color[rgb]{0,0,0}\makebox(0,0)[rb]{\smash{\SB{$E_p$, $f_q$, $\overline{f_q}$}}}}%
    \put(0,0){\includegraphics[width=\unitlength,page=3]{drawing3.pdf}}%
    \put(0.75226333,0.40794121){\color[rgb]{0,0,0}\makebox(0,0)[rb]{\smash{}}}%
    \put(0.33880241,0.40011953){\color[rgb]{0,0,0}\makebox(0,0)[lb]{\smash{\SB{$\varepsilon$}}}}%
    \put(0.68653967,0.38688092){\color[rgb]{0,0,0}\makebox(0,0)[rb]{\smash{\SB{$\eta$}}}}%
    \put(0,0){\includegraphics[width=\unitlength,page=4]{drawing3.pdf}}%
    \put(0.25735604,0.24218462){\color[rgb]{0,0,0}\makebox(0,0)[lb]{\smash{\SB{$r$}}}}%
    \put(0.40772246,0.03044418){\color[rgb]{0,0,0}\makebox(0,0)[lb]{\smash{\SB{$\bar{s}$}}}}%
    \put(0.09471485,0.0294213){\color[rgb]{0,0,0}\makebox(0,0)[lb]{\smash{\SB{$s$}}}}%
    \put(0,0){\includegraphics[width=\unitlength,page=5]{drawing3.pdf}}%
    \put(0.59082165,0.03146706){\color[rgb]{0,0,0}\makebox(0,0)[lb]{\smash{\SB{$r$}}}}%
    \put(0.74527968,0.24116174){\color[rgb]{0,0,0}\makebox(0,0)[lb]{\smash{\SB{$\bar{s}$}}}}%
    \put(0.88746297,0.03555866){\color[rgb]{0,0,0}\makebox(0,0)[lb]{\smash{\SB{$s$}}}}%
    \put(0,0){\includegraphics[width=\unitlength,page=6]{drawing3.pdf}}%
  \end{picture}%
\endgroup%

\captionof{figure}{The surface $X_{[3,\QQ_{3,1}]}$ and the blow-ups $\varepsilon$ and $\eta$.}
\label{Fig:DP3S}
\end{minipage}
\begin{proof}
Let $X_{[3,\QQ_{3,1}]}\rightarrow \QQ_{3,1}$ be the blow-up of two real points $p,q$ on $\QQ_{3,1}$. On $X$ there are two pairs of non-real conjugate $(-1)$-curves -- they are the strict transforms of the fibres $f_p,\overline{f_p}$ and $f_q,\overline{f_q}$ passing through $p$ and $q$ (see Figure~\ref{fig:X_3,S}), and by abuse of notation we denote them by $f_p,\overline{f_p}$ and $f_q,\overline{f_q}$ as well. The contraction of the disjoint $(-1)$-curves $E_p,f_q,\overline{f_q}$ yields a real birational morphism $\varepsilon\colon X_{[3,\QQ_{3,1}]}\rightarrow\PP^2$. We call the images of the $(-1)$-curves $r,s,\bar{s}$ respectively. Composing with an automorphism of $\PP^2$, we may choose $r=[1:0:0]$ and $s=[0:1:{\bf i}]$. The contraction of the disjoint $(-1)$-curves $E_q,f_p,\overline{f_p}$ yields a real birational morphism $\eta\colon X_{[3,\QQ_{3,1}]}\rightarrow\PP^2$ and we can assume that they are contracted onto $r,s,\bar{s}$ as well. Our choice implies that the pencil of lines through $r,s,\bar{s}$ respectively is sent onto the pencil of lines through $r,s,\bar{s}$ respectively (see Figure~\ref{Fig:DP3S}). In fact, $\eta\varepsilon^{-1}$ is -- up to automorphisms of $\PP^2$ fixing the points $r,s,\bar{s}$ -- just the birational involution
\[\eta\varepsilon^{-1}\colon[x_0:x_1:x_2]\dashmapsto[x_1^2+x_2^2:x_0x_2:x_0x_1].\]
The blow-ups $\varepsilon$ and $\eta$ yield an injection $\varepsilon\times\eta\colon X_{[3,\QQ_{3,1}]}\rightarrow\PP^2\times\PP^2$ whose image is described in (\ref{DP63S 1}).\par
The kernel of $\rho$ is isomorphic to the subgroup of $\mathrm{PGL}_3(\R)$ fixing the points $r,s,\bar{s}$, which is isomorphic to $\mathrm{SO}_2(\R)$. \par
Any automorphism of $X_{[3,\QQ_{3,1}]}$ preserves the hexagon in Figure~\ref{fig:X_3,S} and is hence a subgroup of the dihedral group $D_6$. The action of the antiholomorphic involution indicated in Figure~\ref{fig:X_3,S} shows that any automorphism of $X_{[3,\QQ_{3,1}]}$ preserves the set $\{E_p,E_q\}$, which means that $\rho(\Aut_\R(X_{[3,\QQ_{3,1}]}))$ contains, besides the identity map, at most a rotation of order 2 (exchanging $E_p,E_q$) and two reflections, one fixing $E_p$ and $E_q$ and one exchanging them. The automorphism
\[\alpha_1\colon([x_0:x_1:x_2],[y_0:y_1:y_2])\mapsto([y_0:y_1:y_2],[x_0:x_1:x_2])\]
is the lift of  the real birational involution $\eta\varepsilon^{-1}$ of $\PP^2$ and exchanges $E_p,E_q$ and $f_p,f_q$ and so $\rho(\alpha_1)$ is a rotation of order $2$. The automorphism
\[\alpha_2\colon([x_0:x_1:x_2],[y_0:y_1:y_2])\mapsto([x_0:x_2:x_1],[y_0:y_2:y_1])\]
is the lift of a linear map of $\PP^2$ exchanging $s,\bar{s}$ and fixing $r$. It therefore fixes $E_p$ and $E_q$ and exchanges $f_p,\overline{f_p}$, which means that $\rho(\alpha_2)$ is a reflection. Moreover, $\alpha_3:=\alpha_1\alpha_2=\alpha_2\alpha_1$ is the reflection exchanging $E_p$ and $E_q$. The relations $\alpha_1^2=\alpha_2^2=(\alpha_2\alpha_1)^2=\Id$ imply that $\Aut_\R(X_{[3,\QQ_{3,1}]})\stackrel{\rho}\rightarrow\Z/2\Z\times\Z/2\Z$ is surjective and $\rho(\alpha_i)\mapsto\alpha_i$ is a section of $\rho$. This yields (\ref{DP63S 2}). \par
The automorphisms $\alpha_1,\alpha_2$ both preserve the set $\{E_p,E_q\}$ and descend via the contractions of $E_p$ and $E_q$ to automorphisms of $S$ that respectively exchange or fix the points $p,q$. This yields (\ref{DP63S 3}). 
\end{proof}

\subsection{The surfaces obtained by blowing up $\FF_0$}\label{ssec:DP6T}
Blowing up a pair of non-real conjugate points $p,\bar{p}$ on $\FF_0$, we obtain the del Pezzo surface $X_{[3,\FF_0]}$ with $\rk(\Pic(X_{[3,\FF_0]}))=3$. The lift of the action of the antiholomorphic involution is indicated in Figure~\ref{fig:X_3,T} by arrows. By $\Aut(\FF_0,p,\bar{p},\pr)\subset\Aut_\R(\FF_0)$ we denote the subgroup fixing $p$ and $\bar{p}$ and preserving the fibrations.

\begin{Prop}\label{prop:X_3,T}\item
\begin{enumerate}
\item\label{X_3,T 1} The surface $X_{[3,\FF_0]}$ is isomorphic to 
\[\{([x_0:x_1],[y_0:y_1],[z_0:z_1])\in\PP^1\times\PP^1\times\PP^1\mid x_0y_0z_1+x_0y_1z_0+x_1y_0z_0-x_1y_1z_1=0\}.\]
\item\label{X_3, T 2} There is a split exact sequence 
\[0\rightarrow \ker(\rho)\rightarrow \Aut_\R(X_{[3,\FF_0]})\xrightarrow\rho D_6\rightarrow1\] 
where $\ker(\rho)\simeq\Aut_\R(\FF_0,p,\bar{p},\pr)\simeq\mathrm{SO}_2(\R)\times\mathrm{SO}_2(\R)$.
\item\label{X_3, T 3} The group $\rho(\Aut_\R(X_{[3,\FF_0]}))\simeq D_6$ is generated by the reflection $\rho(\alpha_1)$ fixing $E_p$ and $E_{\bar p}$, where
\[\alpha_1\colon([x_0:x_1],[y_0:y_1],[z_0:z_1])\mapsto([y_0:y_1],[x_0:x_1],[z_0:z_1]),\]
and the rotation $\rho(\alpha_2)$ of order $6$, where
\[\alpha_2\colon([x_0:x_1],[y_0:y_1],[z_0:z_1])\mapsto([z_1:z_0],[x_0:-x_1],[y_1:y_0]).\]
\item\label{X_3, T 4} The pair $(X_{[3,\FF_0]},\Aut_\R(X_{[3,\FF_0]}))$ is a minimal pair.
\item\label{X_3,T 5} There is exactly one finite $\Aut_\R(X_{[3,\FF_0]})$-orbit on $X_{[3,\FF_0]}$, namely the one of the six intersection points of the $(-1)$-curves.
\end{enumerate}
\end{Prop}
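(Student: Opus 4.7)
For (\ref{X_3,T 1}), the plan is to construct an explicit morphism $X_{[3,\FF_0]}\to(\PP^1)^3$ using three conic bundle structures: the pullbacks $\pi_1,\pi_2\colon X_{[3,\FF_0]}\to\PP^1$ of the two projections of $\FF_0$ via the blow-up $\eta\colon X_{[3,\FF_0]}\to\FF_0$, together with the pullback $\pi_3$ of the pencil of curves of bidegree $(1,1)$ on $\FF_0$ through $\{p,\bar p\}$ (which is defined over $\R$ and becomes base-point-free after blowing up $p,\bar p$). Choosing coordinates so that $p=([1:\mathbf{i}],[1:\mathbf{i}])$, a short computation produces a real basis of this pencil and shows the joint map $(\pi_1,\pi_2,\pi_3)$ lands in the $(1,1,1)$-hypersurface given by the stated equation. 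The map is birational since the three fibrations jointly separate general points, and it is an isomorphism since source and target are both smooth del Pezzo surfaces of degree $6$.

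For (\ref{X_3, T 2}) and (\ref{X_3, T 3}), I analyse the exact sequence term by term. Any element of $\ker(\rho)$ fixes each of the six $(-1)$-curves; in particular it fixes $E_p$ and $E_{\bar p}$, so it descends via $\eta$ to an element of $\Aut_\R(\FF_0)$ fixing $p$ and $\bar p$, and since the fibres through $p,\bar p$ of both rulings are among the fixed $(-1)$-curves, this descended element also preserves both projections. The reverse inclusion is immediate, giving $\ker(\rho)\simeq\Aut_\R(\FF_0,p,\bar p,\pr)$, which is isomorphic to $\mathrm{SO}_2(\R)\times\mathrm{SO}_2(\R)$ (each factor is the stabiliser in $\mathrm{PGL}_2(\R)$ of the conjugate pair $\{[1:\mathbf{i}],[1:-\mathbf{i}]\}$). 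For the image, the hexagon of $(-1)$-curves embeds $\rho(\Aut_\R(X_{[3,\FF_0]}))$ into $D_6$. I then verify directly that $\alpha_1$ and $\alpha_2$ preserve the tri-homogeneous equation, check the dihedral relations $\alpha_1^2=1$, $\alpha_2^6=1$ (with $\alpha_2^3\neq1$) and $\alpha_1\alpha_2\alpha_1=\alpha_2^{-1}$, so that $\langle\alpha_1,\alpha_2\rangle\simeq D_6$ surjects onto $\rho(\Aut_\R(X_{[3,\FF_0]}))$ and provides a section. Finally, I identify the images $\rho(\alpha_1)$ and $\rho(\alpha_2)$ by tracking the orbits of $E_p$ and $E_{\bar p}$: $\alpha_1$ descends to $(u,v)\mapsto(v,u)$ on $\FF_0$, which exchanges the two rulings but fixes both $E_p,E_{\bar p}$, so $\rho(\alpha_1)$ is the claimed reflection, while $\alpha_2$ permutes the three $\PP^1$-factors cyclically and thus acts as a rotation of order $6$ on the hexagon.

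For (\ref{X_3, T 4}), minimality is immediate from (\ref{X_3, T 3}): since $\rho(\Aut_\R(X_{[3,\FF_0]}))=D_6$ is transitive on the six $(-1)$-curves, no non-empty proper $\Aut_\R$-invariant set of disjoint real or conjugate $(-1)$-curves exists, so no non-trivial equivariant contraction does either. For (\ref{X_3,T 5}), any finite $\Aut_\R(X_{[3,\FF_0]})$-orbit lies in the fixed-point set of the connected component $\ker(\rho)^0\simeq\mathrm{SO}_2(\R)^2$. The complexification of this torus acts on $X_{[3,\FF_0],\C}$ as a $2$-dimensional torus whose fixed points are the two torus-fixed points of $\FF_{0,\C}$ other than $p,\bar p$, together with the two fixed points on each of $E_p$ and $E_{\bar p}$ (the intersections with the strict transforms of the two fibres through $p$ or $\bar p$). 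These six points are precisely the vertices of the hexagon, and $D_6$ acts transitively on them, so they form a single $\Aut_\R(X_{[3,\FF_0]})$-orbit.

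\textbf{Main obstacle.} The delicate step is verifying (\ref{X_3, T 3}), namely that the explicit formula for $\alpha_2$ genuinely realises a rotation of order exactly $6$ on the hexagon of $(-1)$-curves. This requires both checking invariance of the $(1,1,1)$-equation and carefully tracing the image of each $(-1)$-curve under $\alpha_2$, since the dihedral structure is not apparent from the coordinate formula and the interplay between the three conic bundle structures needs to be made explicit.
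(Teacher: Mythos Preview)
Your proposal is correct and follows essentially the same strategy as the paper. The only cosmetic difference is in part~(\ref{X_3,T 1}): you describe the embedding into $(\PP^1)^3$ directly via the three conic-bundle maps (the two rulings of $\FF_0$ plus the pencil of $(1,1)$-curves through $p,\bar p$), whereas the paper obtains it by first constructing two contractions $\varepsilon,\eta\colon X_{[3,\FF_0]}\to\FF_0$ (one blowing down $E_p,E_{\bar p}$, the other blowing down $g_p,\overline{g_p}$), observing that $\eta\varepsilon^{-1}$ preserves one ruling, and then projecting away the redundant factor of $(\PP^1)^4$; these are the same map described in two ways. For parts~(\ref{X_3, T 2})--(\ref{X_3,T 5}) your argument matches the paper's: identify $\ker(\rho)$ with $\Aut_\R(\FF_0,p,\bar p,\pr)$, exhibit $\alpha_1,\alpha_2$ explicitly and verify the dihedral relations to produce the section, deduce minimality from transitivity on the hexagon, and obtain the unique finite orbit from the torus fixed-point set together with the transitive $D_6$-action on the six vertices.
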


\begin{minipage}{1\columnwidth}
\centering
\def\svgwidth{0.5\textwidth}
\begingroup%
  \makeatletter%
  \providecommand\color[2][]{%
    \errmessage{(Inkscape) Color is used for the text in Inkscape, but the package 'color.sty' is not loaded}%
    \renewcommand\color[2][]{}%
  }%
  \providecommand\transparent[1]{%
    \errmessage{(Inkscape) Transparency is used (non-zero) for the text in Inkscape, but the package 'transparent.sty' is not loaded}%
    \renewcommand\transparent[1]{}%
  }%
  \providecommand\rotatebox[2]{#2}%
  \ifx\svgwidth\undefined%
    \setlength{\unitlength}{276.51041587bp}%
    \ifx\svgscale\undefined%
      \relax%
    \else%
      \setlength{\unitlength}{\unitlength * \real{\svgscale}}%
    \fi%
  \else%
    \setlength{\unitlength}{\svgwidth}%
  \fi%
  \global\let\svgwidth\undefined%
  \global\let\svgscale\undefined%
  \makeatother%
  \begin{picture}(1,0.96349711)%
    \put(0,0){\includegraphics[width=\unitlength,page=1]{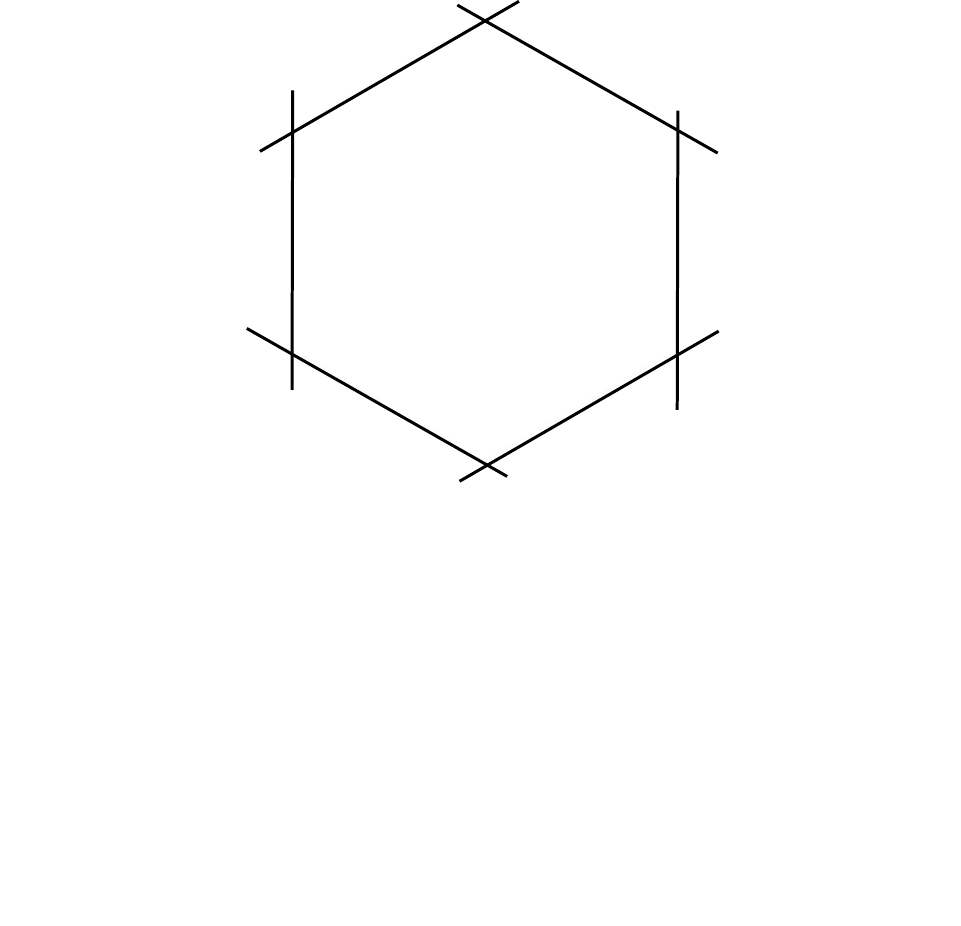}}%
    \put(0.60497763,0.90457009){\color[rgb]{0,0,0}\makebox(0,0)[lb]{\smash{\SB{$g_p$}}}}%
    \put(0.71279448,0.70509753){\color[rgb]{0,0,0}\makebox(0,0)[lb]{\smash{\SB{$E_p$}}}}%
    \put(0.60497763,0.49997415){\color[rgb]{0,0,0}\makebox(0,0)[lb]{\smash{\SB{$f_p$}}}}%
    \put(0.41457804,0.90457009){\color[rgb]{0,0,0}\makebox(0,0)[rb]{\smash{\SB{$\overline{f_p}$}}}}%
    \put(0.41457804,0.49997415){\color[rgb]{0,0,0}\makebox(0,0)[rb]{\smash{\SB{$\overline{g_p}$}}}}%
    \put(0.29507917,0.70509753){\color[rgb]{0,0,0}\makebox(0,0)[rb]{\smash{\SB{$E_{\bar p}$}}}}%
    \put(0.39061339,0.8997852){\color[rgb]{0,0,0}\makebox(0,0)[lb]{\smash{}}}%
    \put(0,0){\includegraphics[width=\unitlength,page=2]{drawing2.pdf}}%
    \put(0.71264894,0.40097099){\color[rgb]{0,0,0}\makebox(0,0)[lb]{\smash{\SBa{$g_p$, $\overline{g_p}$}}}}%
    \put(0.31230615,0.40011953){\color[rgb]{0,0,0}\makebox(0,0)[rb]{\smash{\SBa{$p$, $\bar p$}}}}%
    \put(0,0){\includegraphics[width=\unitlength,page=3]{drawing2.pdf}}%
    \put(0.96161044,0.13939778){\color[rgb]{0,0,0}\makebox(0,0)[lb]{\smash{\SB{$E_p$}}}}%
    \put(0.68589411,0.13939778){\color[rgb]{0,0,0}\makebox(0,0)[rb]{\smash{\SB{$E_{\bar p}$}}}}%
    \put(0.83064791,0.26851813){\color[rgb]{0,0,0}\makebox(0,0)[lb]{\smash{\SB{$\overline{f_p}$}}}}%
    \put(0.83064791,0.01391651){\color[rgb]{0,0,0}\makebox(0,0)[lb]{\smash{\SB{$f_p$}}}}%
    \put(0.19693615,0.26851813){\color[rgb]{0,0,0}\makebox(0,0)[rb]{\smash{\SB{$\overline{f_p}$}}}}%
    \put(0.19693615,0.01391651){\color[rgb]{0,0,0}\makebox(0,0)[rb]{\smash{\SB{$f_p$}}}}%
    \put(0.32539873,0.13939778){\color[rgb]{0,0,0}\makebox(0,0)[lb]{\smash{\SB{$g_p$}}}}%
    \put(0.05177169,0.15097058){\color[rgb]{0,0,0}\makebox(0,0)[rb]{\smash{\SB{$\overline{g_p}$}}}}%
    \put(0.75226333,0.40794121){\color[rgb]{0,0,0}\makebox(0,0)[rb]{\smash{}}}%
    \put(0,0){\includegraphics[width=\unitlength,page=4]{drawing2.pdf}}%
    \put(0.06416847,0.21495472){\color[rgb]{0,0,0}\makebox(0,0)[lb]{\smash{\SBa{$\bar p$}}}}%
    \put(0.30711102,0.0645083){\color[rgb]{0,0,0}\makebox(0,0)[rb]{\smash{\SBa{$p$}}}}%
    \put(0.33880241,0.40011953){\color[rgb]{0,0,0}\makebox(0,0)[lb]{\smash{\SB{$\varepsilon$}}}}%
    \put(0.68653967,0.38688092){\color[rgb]{0,0,0}\makebox(0,0)[rb]{\smash{\SB{$\eta$}}}}%
  \end{picture}%
\endgroup%

\captionof{figure}{The surface $X_{[3,\FF_0]}$ and the blow-ups $\varepsilon$ and $\eta$.}\label{Fig:EqDP6}
\end{minipage}

\begin{proof}
Let $\varepsilon\colon X_{[3,\FF_0]}\to \PP^1\times\PP^1$ be the blow-up of two non real conjugate points in $\PP^1\times\PP^1$. We may assume that the points are $p=([1:{\bf i}],[1:{\bf i}])$ and its conjugate. With this choice of the points, there is a birational morphism $\eta\colon X\to\PP^1\times\PP^1$ which corresponds to the contraction of the fibres $g_p$ and $\overline{g_p}$ (see Figure~\ref{Fig:EqDP6}). This yields an injection $\varepsilon\times\eta\colon X\rightarrow(\PP^1)^4$. The fibration given by $g$ (meaning the fibres linearly equivalent to $g$, drawn punctuated in Figure~\ref{Fig:EqDP6}) is preserved by the birational map $\eta\varepsilon^{-1}$ from $\PP^1\times\PP^1$ to itself. Composing $\eta$ with an automorphism of the first factor, we obtain that $\eta\varepsilon^{-1}$ is the identity map on the first factor. Furthermore, we calculate that -- up to isomorphism of the second factor -- the map $\eta\varepsilon^{-1}$ is given by
\[\eta\varepsilon^{-1}\colon([x_0:x_1],[y_0:y_1])\dashmapsto([x_0:x_1],[x_0y_0+x_1y_1:x_0y_1-x_1y_0]).\]
The projection of $(\PP^1)^4$ dropping the third factor thus yields an injection $\varphi\colon X\stackrel{\varepsilon\times\eta}\longrightarrow(\PP^1)^4\rightarrow\PP^1\times\PP^1\times\PP^1$ and we get $(\ref{X_3,T 1})$ after the isomorphism $y_1\mapsto-y_1$.

For the second part, the map $\rho$ stands for the induced map coming from the action of $\Aut(X)$ on $\Pic(X)$. As the action of any automorphism of $X$ preserves the hexagon in Figure~\ref{fig:X_3,T}, the image of $\rho$ is contained in the dihedral group $D_6$. 
The image contains the reflections $\rho(\alpha_1)$ and $\rho(\alpha_0)$, where 
\[\alpha_1\colon([x_0:x_1],[y_0:y_1],[z_0:z_1])\mapsto([y_0:y_1],[x_0:x_1],[z_0:z_1]),\]
which is the lift of an automorphism exchanging the fibrations of $\FF_0$ and whose image by $\rho$ exchanges $f_p$ and $g_p$ and fixes $E_p$, and 
\[\alpha_0 \colon([x_0:x_1],[y_0:y_1],[z_0:z_1])\mapsto([x_0:-x_1],[z_1:z_0],[y_1:y_0]),\]
which is the lift of an automorphism of $\FF_0$ (the one on the right side in Figure~\ref{Fig:EqDP6}) exchanging $g_p$ and $\overline{g_p}$ and whose image by $\rho$ exchanges $g_p,\overline{g_p}$ and $E_p,f_p$. Their composition
\[\alpha_2:=\alpha_1\alpha_0\colon([x_0:x_1],[y_0:y_1],[z_0:z_1])\mapsto([z_1:z_0],[x_0:-x_1],[y_1:y_0])\]
has order $6$. The image $\rho(\alpha_2)$ is the composition of the two reflections $\rho(\alpha_1)$ and $\rho(\alpha_0)$ and hence is a rotation of order $6$. The elements $\rho(\alpha_1)$ and $\rho(\alpha_2)$ generate $D_6$. This yields the exact sequence (\ref{X_3, T 2}). Moreover, the relations $\alpha_1^2=\alpha_2^6=(\alpha_2\alpha_1)^2=\mathrm{Id}$ imply that $\rho(\alpha_i)\mapsto\alpha_i$, $i=1,2$ is a section of $\rho\colon\Aut_\R(X_{[3,S]})\rightarrow D_6$ and the sequence splits.\par
Last but not least, $\ker(\rho)$ is the group of automorphisms of $\FF_0$ fixing the points $p$ and $\bar p$ and preserving the fibrations of $\FF_0$, and which is isomorphic to $\mathrm{SO}_2(\R)\times\mathrm{SO}_2(\R)$. Its only finite orbits are its fixed points $p,\bar{p}$ and the intersection points of $f_p$ with $\overline{g_p}$ and of $\overline{f_p}$ with $g_p$. The group $D_6$ acts transitively on the intersection points of the $(-1)$-curves of $X_{[3,\FF_0]}$, and this yields (\ref{X_3,T 5}).\par
Finally, the description of $\Aut_\R(X_{[3,\FF_0]})$ in (\ref{X_3, T 2})--(\ref{X_3, T 3}) implies that we cannot contract any $(-1)$ curves on $X_{[3,\FF_0]}$ $\Aut_\R(X_{[3,\FF_0]})$-equivariantly. In particular, the pair $(X_{[3,\FF_0]},\Aut_\R(X_{[3,\FF_0]}))$ is a minimal pair.
\end{proof}


Blowing up two real points $p,q$ on $\FF_0$, we obtain a del Pezzo surface $X_{[4]}$ with $\rk(\Pic(X_{[4]})=4$. The lift of the antiholomorphic involution is indicated on Figure~\ref{fig:X_4} by arrows. Blowing down three of the six real $(-1)$-curves on $X_{[4]}$ yields a birational morphism $X_{[4]}\rightarrow\PP^2$ which is the blow-up of three real non-collinear points $r_1,r_2,r_3$ on $\PP^2$.

\begin{Prop}\label{prop:DP64}\item
\begin{enumerate}
\item\label{X_4 1} The surface $X_{[4]}$ is isomorphic to 
\[\{([x_0:x_1:x_2],[y_0:y_1:y_2]\in\PP^2\times\PP^2\mid x_0y_0=x_1y_1=x_2y_2\}.\] 
\item\label{X_4 2} There is a split exact sequence
\[1\rightarrow\ker(\rho)\rightarrow\Aut_\R(X_{[4]})\stackrel{\rho}\rightarrow D_6\rightarrow 1\]
where $\ker(\rho)\simeq(\R^*)^2$ is the diagonal subgroup of $\mathrm{PGL}_3(\R)$.
\item The group $\rho(\Aut_\R(X_{[4]}))=D_6$ is generated by the rotation $\rho(\alpha_1)$ and the reflection $\rho(\alpha_2)$, where
\[\alpha_1\colon([x_0:x_1:x_2],[y_0:y_1:y_2])\mapsto([y_2:y_0:y_1],[x_2:x_0:x_1])\]
and
\[\alpha_2\colon([x_0:x_1:x_2],[y_0:y_1:y_2])\mapsto([x_1:x_0:x_2],[y_1:y_0:y_2]).\]
\item The pair $(X_{[4]},\Aut_\R(X_{[4]}))$ is a minimal pair. 
\item\label{X_4 5} There is only one finite $\Aut_\R(X_{[4]})$-orbit on $X_{[4]}$, namely the one of the six intersection points of the $(-1)$-curves.
\end{enumerate}
\end{Prop}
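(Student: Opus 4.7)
The plan is to mirror the strategy used for $X_{[3,\FF_0]}$ and $X_{[3,\QQ_{3,1}]}$ in Propositions~\ref{prop:X_3,T} and \ref{prop:DP63S}. First, to obtain (\ref{X_4 1}) I would set up two birational morphisms $X_{[4]}\to\PP^2$. Blowing up three of the six real $(-1)$-curves (e.g. $E_p$, $f_q$, $g_q$) gives a morphism $\varepsilon\colon X_{[4]}\to\PP^2$; blowing down the three complementary real $(-1)$-curves ($E_q$, $f_p$, $g_p$) gives $\eta\colon X_{[4]}\to\PP^2$. After composing each with a real automorphism of $\PP^2$, I can arrange that both morphisms contract onto the same ordered triple of real non-collinear points, which I will normalise as $r_1=[1{:}0{:}0]$, $r_2=[0{:}1{:}0]$, $r_3=[0{:}0{:}1]$. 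Since the two pencils of lines through each $r_i$ correspond to each other, the birational map $\eta\varepsilon^{-1}$ is then the standard quadratic involution $[x_0{:}x_1{:}x_2]\dashmapsto[x_1x_2{:}x_0x_2{:}x_0x_1]$. The product $\varepsilon\times\eta\colon X_{[4]}\hookrightarrow\PP^2\times\PP^2$ then cuts out exactly the equations $x_0y_0=x_1y_1=x_2y_2$.

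For (\ref{X_4 2}) and the third item, I would argue as follows. Any automorphism preserves the hexagon of $(-1)$-curves in Figure~\ref{fig:X_4}, so $\rho(\Aut_\R(X_{[4]}))\subset D_6$. Compatibility with the antiholomorphic involution (which in this case preserves every $(-1)$-curve since they are all real) puts no further restriction. An element of $\ker(\rho)$ fixes each $(-1)$-curve, hence in particular fixes $E_p$ and $E_q$, so it descends via the contraction $X_{[4]}\to\PP^2$ to an element of $\mathrm{PGL}_3(\R)$ fixing the three points $r_1,r_2,r_3$; this stabiliser is precisely the diagonal torus $(\R^*)^2$. To produce $D_6$ in the image, the automorphism
\[
\alpha_1\colon([x_0{:}x_1{:}x_2],[y_0{:}y_1{:}y_2])\mapsto([y_2{:}y_0{:}y_1],[x_2{:}x_0{:}x_1])
\]
is the lift of the standard quadratic involution composed with a cyclic permutation of the $r_i$ and acts on the hexagon as a rotation of order $6$ (it exchanges the two triangles $\{E_p,f_p,g_p\}$ and $\{E_q,f_q,g_q\}$ while rotating each), while
\[
\alpha_2\colon([x_0{:}x_1{:}x_2],[y_0{:}y_1{:}y_2])\mapsto([x_1{:}x_0{:}x_2],[y_1{:}y_0{:}y_2])
\]
is the lift of the transposition of $r_1,r_2$ and acts as a reflection. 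The relations $\alpha_1^6=\alpha_2^2=(\alpha_1\alpha_2)^2=\mathrm{Id}$ show that $\rho$ is surjective and splits.

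For (4), minimality follows immediately from (3): the group $\rho(\Aut_\R(X_{[4]}))=D_6$ acts transitively on the six $(-1)$-curves of the hexagon, so no set of pairwise disjoint real or conjugate-pair $(-1)$-curves can be contracted $\Aut_\R(X_{[4]})$-equivariantly. For (\ref{X_4 5}), I would note that the torus $\ker(\rho)\simeq(\R^*)^2$ has for finite orbits only its fixed points, namely $p,q$ and the four intersections among the $(-1)$-curves $f_p,\overline{f_p},g_p,\overline{g_p},f_q,\overline{f_q},g_q,\overline{g_q}$; among these the only ones forming a single $D_6$-orbit are the six intersection points of the hexagon itself. A direct check that $D_6$ permutes these six points transitively, together with the observation that $p$ and $q$ lie on $E_p$ and $E_q$ respectively and are sent off the exceptional divisors by $\alpha_1$, completes the verification.

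The only slightly delicate step is the explicit normalisation in (\ref{X_4 1}): one must choose the contractions $\varepsilon$ and $\eta$ coherently so that $\eta\varepsilon^{-1}$ really becomes the standard quadratic involution with no extra torus twist. The remaining steps are essentially mechanical once the embedding is in place.
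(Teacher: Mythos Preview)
Your approach is essentially the same as the paper's; the only substantive difference is that you spell out the two contractions $\varepsilon,\eta$ to $\PP^2$ in part (\ref{X_4 1}) following the pattern of Propositions~\ref{prop:DP63S} and~\ref{prop:X_3,T}, whereas the paper simply invokes that $X_{[4]}$ is the blow-up of three non-collinear real points in $\PP^2$.

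One point needs cleaning up: your argument for (\ref{X_4 5}) is muddled. On $X_{[4]}$ all six $(-1)$-curves are real, so there are no separate curves $\overline{f_p},\overline{g_p}$ --- you have imported notation from $X_{[3,\FF_0]}$. Moreover $p,q$ are points of $\FF_0$, not of $X_{[4]}$, so it makes no sense to say ``$p$ and $q$ lie on $E_p$ and $E_q$''. The correct statement is that the torus $(\R^*)^2$ has exactly six fixed points on $X_{[4]}$ (two on each exceptional divisor over $r_1,r_2,r_3$), and these are precisely the six vertices of the hexagon; since $D_6$ permutes the vertices transitively, there is a unique finite orbit. This is exactly the paper's argument.
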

\begin{proof}
The surface $X_{[4]}$ is the blow-up of three non-collinear real points in $\PP^2$ and hence isomorphic to $\{([x_0:x_1:x_2],[y_0:y_1:y_2])\in\PP^2\times\PP^2\mid x_0y_0=x_1y_1=x_2y_2\}$, the blow-up of the real points $[1:0:0],[0:1:0],[0:0:1]$. \par
The kernel of $\rho$ is isomorphic to the subgroup of $\Aut_\R(\PP^2)=\mathrm{PGL}_3(\R)$ fixing each of the three points $[1:0:0],[0:1:0],[0:0:1]$, which is the diagonal subgroup.  \par
The image by $\rho$ of $\Aut_\R(X_{[4]})$ is contained in $D_6$ because any automorphism of $X_{[4]}$ preserves the hexagon in Figure~\ref{fig:X_4}. The involution
\[\beta_1\colon([x_0:x_1:x_2],[y_0:y_1:y_2])\mapsto([y_0:y_1:y_2],[x_0:x_1:x_2])\]
is the lift of the standard Cremona involution on $\PP^2$ and hence $\rho(\beta_1)$ is a rotation of order $2$. The automorphism
\[\beta_2\colon([x_0:x_1:x_2],[y_0:y_1:y_2])\mapsto([x_2:x_0:x_1],[y_2:y_0:y_1])\]
of order $3$ is the lift of the automorphism of $\PP^2$ that permutes the three points $[1:0:0],[0:1:0],[0:0:1]$ and so $\rho(\beta_2)$ is a rotation of order $3$. Their composition
\[\alpha_1:=\beta_2\beta_1=\beta_1\beta_2\colon([x_0:x_1:x_2],[y_0:y_1:y_2])\mapsto([y_2:y_0:y_1],[x_2:x_0:x_1])\]
is of order $6$ and $\rho(\alpha_1)$ is a rotation of order $6$ by construction. Furthermore, we find that the involution
\[\alpha_2\colon([x_0:x_1:x_2],[y_0:y_1:y_2])\mapsto([x_1:x_0:x_2],[y_1:y_0:y_2])\]
is the lift of the automorphism of $\PP^2$  that exchanges $[1:0:0]$ and $[0:1:0]$ and fixes $[0:0:1]$, hence $\rho(\alpha_2)$ acts as a reflection. It follows that $\rho(\alpha_1)$ and $\rho(\alpha_2)$ generated $D_6$, and we get the exact sequence in (\ref{X_4 2}). Furthermore, the relations $\alpha_1^6=\alpha_2^2=(\alpha_1\alpha_2)^2=\mathrm{Id}$ imply that $\rho(\alpha_i)\mapsto\alpha_i$ is a section of $\rho\colon\Aut_\R(X_{[4]})\rightarrow D_6$ and the sequence splits.\par
The above description of $\Aut_\R(X_{[4]})$ yields that we cannot contract any $(-1)$-curve on $X_{[4]}$ $\Aut_\R(X_{[4]})$-equivariantly. In particular, $(X_{[4]},\Aut_\R(X_{[4]}))$ is a minimal pair.\par
The fact that $D_6$ acts transitively on the intersection points of the six $(-1)$-curves and that the only fixed points of $\ker(\rho)$ are the points $[1:0:0],[0:1:0],[0:0:1]$ implies (\ref{X_4 5}).
\end{proof}


\section{Pairs of real conic bundles}\label{sec:CB}
Recall that for a real conic bundle $\pi\colon X\rightarrow\PP^1$ we denote by $\Aut_\R(X,\pi)\subset\Aut_\R(X)$ the subgroup of automorphisms respecting the conic bundle structure on $X$. The morphism $\pi$ induces a homomorphism $\alpha\colon\Aut_\R(X,\pi)\rightarrow\Aut_\R(\PP^1)=\mathrm{PGL}_2(\R)$ whose kernel we denote by $\ker(\alpha)=\Aut_\R(X/\pi)$. We get an exact sequence
\begin{equation}\tag{$\ast$}\label{stern}1\rightarrow\Aut_\R(X/\pi)\rightarrow\Aut_\R(X,\pi)\stackrel{\alpha}\rightarrow\Aut_\R(\PP^1).\end{equation}
Proposition~\ref{prop which cases} and Lemma~\ref{lem aut} imply that a minimal pair $(X,G)$ of a $\R$-rational $G$-variety is either a del Pezzo surface of degree $9,8$ or $6$ and $G=\Aut_\R(X)$, or it admits a real conic bundle $\pi\colon X\rightarrow\PP^1$ with a real birational morphism of conic bundles $X\rightarrow Y$, where $Y$ is the sphere blown up in a pair of non-real conjugate points or a Hirzebruch surface, and $G=\Aut_\R(X,\pi)$. In this section, we aim at classifying the real conic bundles $\pi\colon X\rightarrow\PP^1$ that are relatively $\Aut_\R(X,\pi)$-minimal.

The following lemma is an adaption of \cite[Lemma 4.3.5]{B10} to our purpose.
\begin{Lem}\label{lem:blowup CB}
Suppose $\pi\colon X\rightarrow\PP^1$ is a relatively $\Aut_\R(X,\pi)$-minimal real conic bundle with a morphism $\eta\colon X\rightarrow Y$ of real conic bundles where $Y$ is as above, and $\eta$ is not an isomorphism. Let 
\[G:=\Aut_\R(X/\pi)\cap\left(\ \ker(\Aut_\R(X,\pi)\rightarrow\Aut(\Pic(X))\ \right).\] \par
If $G$ is non-trivial, there exists $n\geq1$ and a $($perhaps non-real$)$ birational morphism $X\rightarrow\FF_n$ of conic bundles defined over $\C$ that blows up $2n$ points in a section $s$ with $s^2=n$ which is disjoint from the exceptional section of $\FF_n$, and the strict transform of these two sections are exchanged by an element of $\Aut_\R(X,\pi)$. \par
If $G$ is trivial, then $\Aut_\R(X/\pi)\simeq(\Z/2\Z)^r$ for $r\in\{0,1,2\}$.
\end{Lem}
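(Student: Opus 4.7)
The plan for part 1 is to pass to the complex model and exploit that every element of $G_\C$ preserves each $(-1)$-curve of $X_\C$ individually. First I would fix a birational morphism $X_\C \to \FF_m$ of complex conic bundles; since each singular fiber of a conic bundle has exactly two components, this morphism is the blow-up of finitely many distinct points of $\FF_m$. Any $g \in G_\C$ acts trivially on $\Pic(X_\C)$, hence preserves each $(-1)$-curve (each being the unique effective divisor in its class); in particular it preserves the exceptional divisors and the strict transforms of the fibers through the blown-up points, and therefore descends to $\bar g \in \Aut(\FF_m)$ preserving the fibration, fixing the exceptional section $s_\infty$, and fixing each blown-up point individually. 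The subgroup of $\Aut(\FF_m)$ preserving the fibration and $s_\infty$ is $\mathbb{G}_m \ltimes H^0(\PP^1,\mathcal{O}(m))$, acting off $s_\infty$ by $(t,y) \mapsto (t,\lambda y + f(t))$. If $\lambda \neq 1$, the fixed locus of $\bar g$ on $\FF_m$ is $s_\infty$ together with a section $s$ of self-intersection $m$ disjoint from $s_\infty$, so all blown-up points of $X_\C \to \FF_m$ lie on $s_\infty \cup s$. The pure-translation case ($\lambda = 1$, $f \neq 0$) needs to be excluded using the $\Aut_\R(X,\pi)$-minimality of $X$; ruling this case out is the main obstacle of the argument.

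Next, via elementary transformations (blowing up a point of $s_\infty$ and contracting the strict transform of the fiber through it, or its inverse), I would move all blown-up points lying on $s_\infty$ onto the positive section, producing a new birational morphism $X_\C \to \FF_n$ for some $n \geq 1$ in which all $k$ blown-up points lie on a single section $s$ with $s^2 = n$ disjoint from the new $s_\infty$. On $X_\C$, the strict transforms then satisfy $\tilde s^2 = n - k$ and $\tilde s_\infty^2 = -n$. Since $G$ is normal in $\Aut_\R(X,\pi)$ (being intrinsically defined as an intersection of kernels), its fixed locus $\tilde s \cup \tilde s_\infty$ is $\Aut_\R(X,\pi)$-invariant, and every element of $\Aut_\R(X,\pi)$ either preserves both components or swaps them. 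If no swap ever occurred, $\tilde s_\infty$ would be $\Aut_\R(X,\pi)$-invariant, and a careful analysis of the induced action on the $(-1)$-curves in the fibers would produce an $\Aut_\R(X,\pi)$-equivariant contraction, contradicting relative minimality. Hence some element of $\Aut_\R(X,\pi)$ exchanges $\tilde s$ and $\tilde s_\infty$, which forces $\tilde s^2 = \tilde s_\infty^2$, i.e.\ $n - k = -n$, giving $k = 2n$.

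For part 2, assume $G = 1$. By Lemma~\ref{lem aut}, $\Aut_\R(X,\pi)$ acts finitely on $\Pic(X)$, so $\Aut_\R(X/\pi)$ injects into $\Aut(\Pic(X))$ and is finite. For $g \in \Aut_\R(X/\pi)$, $g$ fixes each singular fiber of $\pi$ setwise (as it acts trivially on the base) and either fixes or swaps its two components. In a formal neighbourhood of a swapped singular fiber, the local conic bundle model $xy = t$ forces $g$ to be an involution, so $g^2$ fixes each component of every singular fiber and hence acts trivially on $\Pic(X)$, whence $g^2 \in G = 1$. Therefore $\Aut_\R(X/\pi)$ is elementary abelian of exponent $2$, and restriction to the generic fiber embeds it into $\mathrm{PGL}_2(\R(t))$; the only elementary abelian $2$-subgroups of $\mathrm{PGL}_2$ over a field of characteristic zero are $1$, $\Z/2\Z$, and the Klein four-group, giving $\Aut_\R(X/\pi) \simeq (\Z/2\Z)^r$ with $r \in \{0,1,2\}$.
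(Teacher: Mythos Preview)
Your argument for the second assertion is essentially the paper's: since $G$ is trivial, $\Aut_\R(X/\pi)$ injects into $\Aut(\Pic(X))$; every $g\in\Aut_\R(X/\pi)$ either fixes or swaps the two components of each singular fibre, so $g^2$ fixes every component and hence (as $K_X$, the fibre class, and the component classes span $\Pic(X_\C)_{\Q}$) acts trivially on $\Pic(X)$, giving $g^2=1$. The embedding into $\mathrm{PGL}_2(\C(t))$ then bounds the rank. Your local model $xy=t$ is a detour: it only shows $g^2$ is trivial in a neighbourhood of one fibre, which is neither needed nor sufficient, but your actual chain of implications does not depend on it.

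For the first assertion your ingredients are right but the order is wrong, and this is exactly what creates the gap you flag. You fix a single $g\in G$, compute its image $\bar g$ in $\Aut(\FF_m/\PP^1)$, and split into $\lambda\neq 1$ (two fixed sections) versus a pure translation. But nothing you have written rules out that \emph{every} element of $G$ descends to a pure translation; and even in the diagonalisable case, two different elements of $G$ could a priori have different second fixed sections, so it is not yet clear that the blown-up points lie on one common section $s$.

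The paper avoids both problems by invoking minimality \emph{before} analysing $G$. First choose the contraction $\eta'\colon X_\C\to\FF_n$ so that no blown-up point lies on the exceptional section $E_n$ (this is your elementary-transformation step, done up front). Then $\tilde E_n$ meets exactly one component of each singular fibre, and relative $\Aut_\R(X,\pi)$-minimality supplies an $h\in\Aut_\R(X,\pi)$ sending $\tilde E_n$ to a different section. Normality of $G$ gives $h(\tilde E_n)\subset\mathrm{Fix}(G)$, so $\mathrm{Fix}(G)$ already contains two distinct sections. A non-trivial subgroup of $\mathrm{PGL}_2(\C(t))$ fixing two points of the generic fibre lies in a torus and therefore fixes exactly two points of every fibre; thus $\mathrm{Fix}(R)=E_n\cup s$ with $E_n\cap s=\emptyset$, pure translations are excluded automatically, and all blown-up points (which lie in $\mathrm{Fix}(R)$ but off $E_n$) land on $s$. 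Your self-intersection count then finishes verbatim.
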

\begin{proof}
By assumption, $\eta$ blows up at least one point on $Y$, hence $X$ has at least one singular fibre. All its singular fibres have exactly two irreducible components because $\pi\colon X\rightarrow\PP^1$ is relatively $\Aut_\R(X,\pi))$-minimal. Note that $G$ is a normal subgroup of $\Aut(X,\pi)$. Forgetting the antiholomorphic involution on $X$, we contract in each singular fibre one component and obtain a (perhaps non-real) $G$-equivariant morphism $\eta'\colon X\rightarrow\FF_n$ for some $n\geq0$. By changing the choice of the components we contract, we obtain $n\geq1$ and further that $\eta'$ does not blow-up any points on the exceptional section $E_n$ of $\FF_n$. Let $R:=\eta' G(\eta')^{-1}\subset\Aut_\R(\FF_n)$.\par
Suppose that $G$ is not trivial. The group $R$ fixes the points blown-up by $\eta'$ and it preserves $E_n$. Hence $G$ preserves the strict transform $\tilde{E}_n$ of $E_n$ in $X$. By construction of $\eta'$, the curve $\tilde{E}_n$ intersects exactly one component of each singular fibre of $X$. The morphism $\pi\colon X\rightarrow\PP^1$ is relatively $\Aut_\R(X,\pi))$-minimal, so there exists $h\in\Aut_\R(X,\pi)$ exchanging the components of singular fibres of $X$. As $G$ is normal in $\Aut_\R(X,\pi)$, the we have $hGh^{-1}=G$. Thus
$\mathrm{Fix}(G)=\mathrm{Fix}(hGh^{-1})=h(\mathrm{Fix}(G))$ contains the section $h(\tilde{E}_n)$, and $h(\tilde{E}_n)\neq\tilde{E}_n$. In particular, $R$ preserves the section $s:=\eta'(h(\tilde{E}_n))\neq E_n$. As $R\subset\mathrm{PGL}_2(\C(t))$ is a non-trivial subgroup, it fixes at most two points on each fibre of $\FF_n$, thus $E_n\cup s=\mathrm{Fix}(R)$ and $s$ contains all points blown up by $\eta'$. Further, the two curves $\tilde{E}_n$ and $h(\tilde{E}_n)$ have the same self-intersection, which is equal to $-n$ because $\eta'$ does not blow up any points on $E_n$. Moreover, since the elements of $R\subset\mathrm{PGL}_2(\C(t))$ fix exactly two points on all but finitely many fibres, they are diagonalisable and hence fix two points on {\em every} fibre. It follows that $E_n$ and $s$ are disjoint. Hence $s\sim E_n+nf$ and therefore $s^2=n$. It follows that $\eta'$ is the blow-up of $2n$ points on $s$.\par
Suppose that $G$ is trivial. Then every non-trivial element of $\Aut_\R(X/\pi)$ is an involution. As $\Aut_\R(X/\pi)\subset\mathrm{PGL}_2(\C(x))$, it follows that $\Aut_\R(X/\pi)$ is isomorphic to $(\Z/2\Z)^r$ for $r\in\{0,1,2\}$.
\end{proof}

\subsection{Real conic bundles obtained by blowing up a del Pezzo surface}\label{ssec:CBDP}
In this subsection, we study the ones with a real birational morphism of conic bundles $\eta\colon X\rightarrow Y$ to the surface $Y$ obtained by blowing up the sphere in a pair of non-real conjugate points. The surface $Y$ is a del Pezzo surface of degree $6$ with $\rk(\Pic(Y))=2$ and so, by Lemma~\ref{lem:dP class},
\[\pr_1\colon Y=X_{[2]}\rightarrow \QQ_{3,1}\]
is isomorphic to the blow-up of the points $r:=[0:1:{\bf i}:0]$ and $\bar{r}=[0:1:-{\bf i}:0]$ (see Proposition~\ref{prop:X_2}). The generic fibre of the projection
\[\pr\colon X_{[2]}\rightarrow\PP^1,\quad ([w:x:y:z],[u:v])\mapsto[u:v]\]
is the non-rational conic $C\subset\PP^2_{\R(t)}$ given by $x^2+y^2-tz^2=0$, which makes $\pr\colon X_{[2]}\rightarrow\PP^1$ a conic bundle. However, the description in Lemma~\ref{desc:X_2} of the conic bundle $(X_{[2]},\pr)$ will turn out to be more convenient. \par
As described in Remark~\ref{rmk:real part}, the isomorphism of complex surfaces $\varphi\colon \QQ_{3,1}\longrightarrow\PP^1\times\PP^1$, 
\begin{align*}
\varphi\colon& [w:x:y:z]&\longmapsto&([w+z:y+{\bf i}x],[w+z:y-{\bf i}x])=([y-{\bf i}x:w-z],[y+{\bf i}x:w-z])\\
\varphi^{-1}\colon& ([x_0:x_1],[y_0:y_1])&\mapsto&[x_0y_0+x_1y_1:{\bf i}(x_0y_1-x_1y_0):x_0y_1+x_1y_0:x_0y_0-x_1y_1]
\end{align*}
induces an isomorphism of real surfaces $\varphi\colon \QQ_{3,1}\rightarrow(\PP^1\times\PP^1,\sigma_S)$, where
\[\sigma_S\colon([x_0:x_1],[y_0:y_1])\mapsto([\bar{y}_0:\bar{y}_1],[\bar{x}_0:\bar{x}_1])\]
and $p:=\varphi(r)=([0:1],[1:0])$ and $\bar{p}=\varphi(\bar{r})=([1:0],[0:1])$.\par

\begin{Lem}\label{desc:X_2}\item
\begin{enumerate}
\item The real surface $X_{[2]}$ is isomorphic to
\[(X_{[2]},\ \sigma)\simeq\left(\{([x_0:x_1:x_2],[y_0:y_1:y_2])\in\PP^2\times\PP^2\mid x_0y_0=x_1y_1=x_2y_2\},\ \sigma_{[2]}\right)\]
where $\sigma_{[2]}\colon ([x_0:x_1:x_2],[y_0:y_1:y_2])\mapsto([\overline{y_1}:\overline{y_0}:\overline{y_2}],[\overline{x_1}:\overline{x_0}:\overline{x_2}])$ 
and the conic bundle structure $\pi_{[2]}\colon X_{[2]}\rightarrow\PP^1$ is given by
\[\pi_{[2]}\colon([x_0:x_1:x_2],[y_0:y_1:y_2])\longmapsto[x_0:x_1]=[y_1:y_0].\]
\item The irreducible components of the singular fibres of $\pi_{[2]}\colon X_{[2]}\rightarrow\PP^1$ are given by 
\[f_p\colon y_1=y_2=0,\quad\overline{f_p}\colon x_0=x_2=0,\quad f_{\bar p}\colon x_1=x_2=0,\quad\overline{f_{\bar p}}\colon y_0=y_2=0\]
$($see Figure~$\ref{fig:CBX_2})$ and the pair of non-real conjugate $(-1)$-``sections" by
\[s\colon x_0=x_1=0,\quad \bar{s}\colon y_0=y_1=0.\] 
\end{enumerate}
\end{Lem}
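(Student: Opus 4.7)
The plan is to realise the underlying complex surface of $X_{[2]}$ as the blow-up of $\PP^1\times\PP^1$ at the two non-real conjugate points $p=([0:1],[1:0])$ and $\bar p=([1:0],[0:1])$---using the isomorphism $\varphi$ of Remark~\ref{rmk:real part}---and then to write down an explicit complex isomorphism onto the standard model $V:=\{X_0Y_0=X_1Y_1=X_2Y_2\}\subset\PP^2\times\PP^2$ which intertwines the antiholomorphic involution induced by $\sigma_S$ with the involution $\sigma_{[2]}$ of the statement.

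Concretely, I would consider the two three-dimensional linear subspaces of $H^0(\PP^1\times\PP^1,\mathcal O(1,1))$ consisting of the sections vanishing at $p$ and at $\bar p$, namely $V_p=\langle x_0y_0,x_1y_1,x_0y_1\rangle$ and $V_{\bar p}=\langle x_1y_1,x_0y_0,x_1y_0\rangle$, and form the $\C$-rational map
\[\Phi([x_0:x_1],[y_0:y_1]) = \bigl([x_0y_0:x_1y_1:x_0y_1],\ [x_1y_1:x_0y_0:x_1y_0]\bigr).\]
The two factors have indeterminacy exactly at $p$ and at $\bar p$ respectively, so after blowing up both points $\Phi$ extends to a morphism on the complex blow-up. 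A direct monomial check gives $X_iY_i=x_0x_1y_0y_1$ for $i=0,1,2$, so the image lies in $V$. The six $(-1)$-curves---the two exceptional divisors $E_p,E_{\bar p}$ and the strict transforms of the four fibres $\{x_0=0\}$, $\{y_1=0\}$, $\{x_1=0\}$, $\{y_0=0\}$ through $p$ and $\bar p$---are mapped bijectively to the six $(-1)$-curves of $V$ (each one contracted by exactly one of the two factors of $\Phi$ and mapped isomorphically onto a coordinate line by the other); so $\Phi$ contracts no curve, and being birational it is an isomorphism onto $V$.

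The intertwining $\sigma_{[2]}\circ\Phi=\Phi\circ\sigma$ is then a direct substitution: both sides evaluate to
\[\bigl([\overline{x_0y_0}:\overline{x_1y_1}:\overline{x_1y_0}],\ [\overline{x_1y_1}:\overline{x_0y_0}:\overline{x_0y_1}]\bigr)\]
at $([x_0:x_1],[y_0:y_1])$. For the conic bundle, $\pi_{[2]}\circ\Phi=[x_0y_0:x_1y_1]$; pulling back the projection $\pr\colon X_{[2]}\to\PP^1$ of Proposition~\ref{prop:X_2} via $\varphi^{-1}$ of Remark~\ref{rmk:real part} yields $[x_0y_0+x_1y_1:x_0y_0-x_1y_1]$, which is the image of $[x_0y_0:x_1y_1]$ under the real PGL$_2$-transformation $[a:b]\mapsto[a+b:a-b]$, so the two conic bundle structures agree up to a change of coordinates on the base $\PP^1$. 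Finally, to identify the listed singular-fibre components and the pair of $(-1)$-sections, I would compute $\Phi$ on each of the six $(-1)$-curves: the two exceptional divisors $E_p,E_{\bar p}$ map to $\bar s$ and $s$ respectively, and the four fibre strict transforms map to $f_p$, $\bar f_p$, $f_{\bar p}$, $\bar f_{\bar p}$, which combine pairwise into the two singular fibres of $\pi_{[2]}$ over $[1:0]$ and $[0:1]$ in the base.

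The main obstacle is showing that $\Phi$ really extends to a morphism on the blow-up whose image lies in $V$: at each of $p,\bar p$ the corresponding linear system has all three chosen sections vanishing simultaneously, so in local blow-up coordinates one must clear a common factor before reading off the value on the exceptional divisor, and then check that the cleared limit combines with the (already well-defined) value of the other factor into a point of $V$. Once this compatibility is verified in local charts near $p$ and $\bar p$, the remaining claims of the lemma reduce to routine monomial computations.
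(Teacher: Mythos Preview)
Your approach is correct and essentially the same as the paper's: the paper writes down the contraction $Z\to\PP^1\times\PP^1$ and checks that the lifted antiholomorphic involution is $\sigma_{[2]}$, whereas you write its rational inverse $\Phi$ and check the intertwining directly; the extension of $\Phi$ through the blow-up and the monomial verifications go through exactly as you say. One small caveat: your formula $\Phi=([x_0y_0:x_1y_1:x_0y_1],[x_1y_1:x_0y_0:x_1y_0])$ differs from the paper's inverse $([x_0y_0:x_1y_1:x_1y_0],[x_1y_1:x_0y_0:x_0y_1])$ by precomposition with the factor swap $\tau$ on $(\PP^1\times\PP^1,\sigma_S)$, which interchanges $p$ and $\bar p$. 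Both are valid real isomorphisms, but this is why you find $E_p\mapsto\bar s$ while the paper's convention (used in Figure~\ref{fig:CBX_2} and in the later Propositions) is $E_p\mapsto s$; likewise your map sends the first-ruling fibre through $p$ to the curve the lemma labels $\overline{f_p}$ rather than $f_p$. If you want your identifications in part~(2) to match the labels as stated, swap $x_0y_1$ and $x_1y_0$ in the third slots of $\Phi$.
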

\begin{proof}
Over $\C$, there is only one del Pezzo surface of degree $6$ and it is isomorphic to 
\[Z:=\{([x_0:x_1:x_2],[y_0:y_1:y_2])\in\PP^2\times\PP^2\mid x_0y_0=x_1y_1=x_2y_2\}\] 
(see Proposition~\ref{prop:DP64}). The abstract birational morphism
\begin{align*}
Z&\longrightarrow \PP^1\times\PP^1\\
([x_0:x_1:x_2],[y_0:y_1:y_2])&\longmapsto([x_0:x_2],[x_2:x_1])=([y_2:y_0],[y_1:y_2])\\
([ru:sv:su],[sv:ru:rv])&\dashmapsfrom([r:s],[u:v])
\end{align*}
contracts the $(-1)$-curves $s_1=\{x_0=x_1=0\}$ and $s_2=\{y_0=y_1=0\}$ onto $p$ and $\bar{p}$, respectively. In Figure~\ref{fig:X_2}, they are therefore denoted by $E_p$ and $E_{\bar{p}}$.
The lift of the antiholomorphic involution $\sigma_S$ onto $Z$ is $\sigma_{[2]}$ 
and makes $(Z,\sigma_{[2]})$ a real del Pezzo surface isomorphic to $X_{[2]}$ (Lemma~\ref{lem:dP class}) and $s_2=\overline{s_1}$. 
The morphism
\[\pi_{[2]}\colon X_{[2]}\rightarrow\PP^1,\quad ([x_0:x_1:x_2],[y_0:y_1:y_2])\longmapsto[x_0:x_1]=[y_1:y_0]\]
is the projection onto the $(-1)$-curves $s$ and $\bar{s}$ and is thus a conic bundle. The antiholomorphic involution $\sigma_{[2]}$ descends to the standard antiholomorphic involution $[u:v]\mapsto[\bar{u}:\bar{v}]$ on $\PP^1$, which makes $\pi_{[2]}\colon X_{[2]}\rightarrow\PP^1$ a real conic bundle. The morphisms are visualised in Figure~\ref{fig:CBX_2}. The equations of the irreducible components of the singular fibres are checked by calculation.\par
By abuse of notation, we will denote the surface $Z$ by $X_{[2]}$ endowed with $\sigma_{[2]}$.
\end{proof}
\begin{minipage}{0.5\textwidth}
\centering
\def\svgwidth{1\columnwidth}
\begingroup%
  \makeatletter%
  \providecommand\color[2][]{%
    \errmessage{(Inkscape) Color is used for the text in Inkscape, but the package 'color.sty' is not loaded}%
    \renewcommand\color[2][]{}%
  }%
  \providecommand\transparent[1]{%
    \errmessage{(Inkscape) Transparency is used (non-zero) for the text in Inkscape, but the package 'transparent.sty' is not loaded}%
    \renewcommand\transparent[1]{}%
  }%
  \providecommand\rotatebox[2]{#2}%
  \ifx\svgwidth\undefined%
    \setlength{\unitlength}{459.97662402bp}%
    \ifx\svgscale\undefined%
      \relax%
    \else%
      \setlength{\unitlength}{\unitlength * \real{\svgscale}}%
    \fi%
  \else%
    \setlength{\unitlength}{\svgwidth}%
  \fi%
  \global\let\svgwidth\undefined%
  \global\let\svgscale\undefined%
  \makeatother%
  \begin{picture}(1,0.90167611)%
    \put(0,0){\includegraphics[width=\unitlength,page=1]{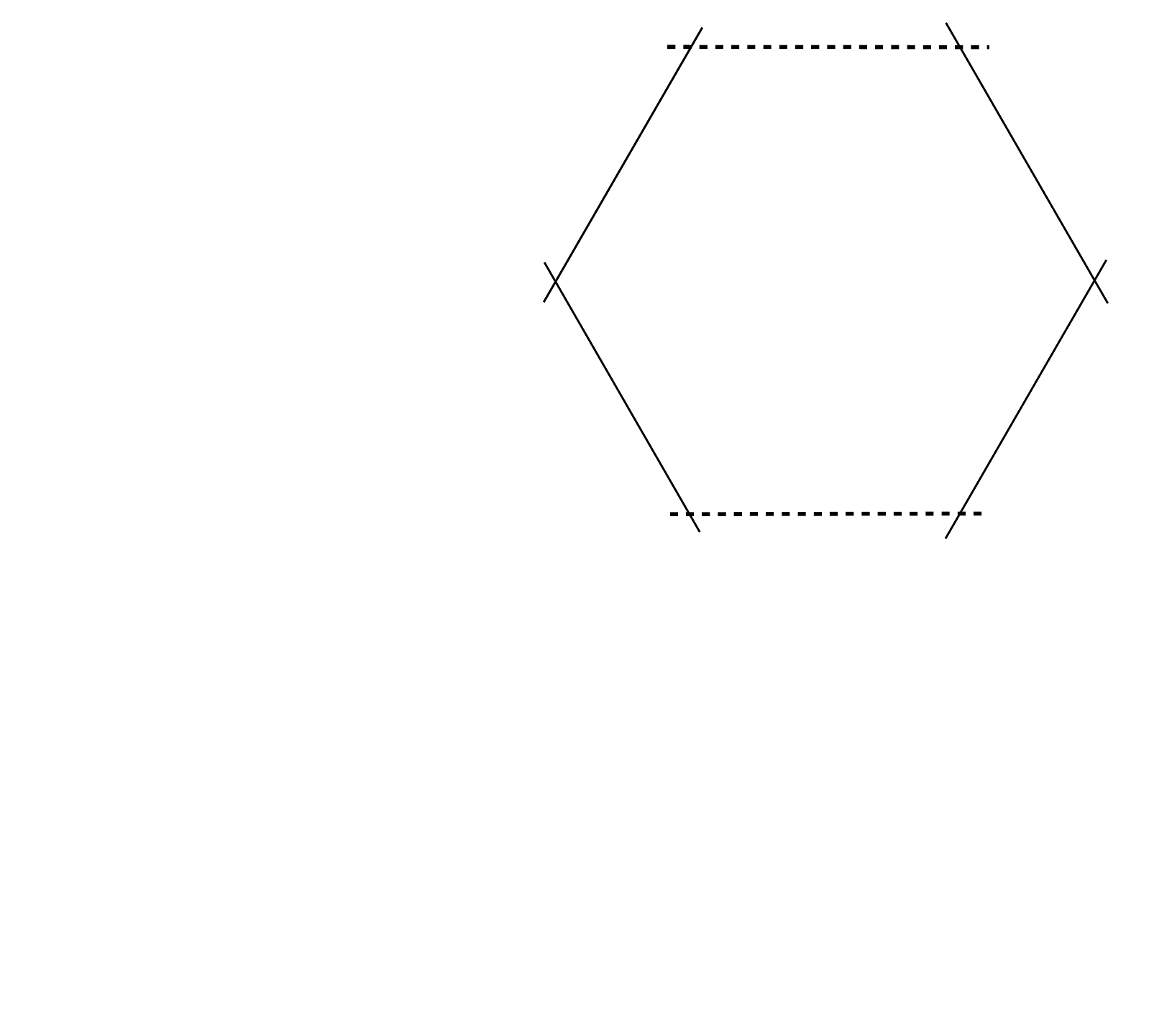}}%
    \put(0.3130771,0.64793616){\color[rgb]{0,0,0}\makebox(0,0)[lb]{\smash{\SB{$(X_{[2]},\sigma_Z)$}}}}%
    \put(0,0){\includegraphics[width=\unitlength,page=2]{fig06.pdf}}%
    \put(0.03480207,0.38705339){\color[rgb]{0,0,0}\makebox(0,0)[lb]{\smash{\SB{$(\PP^1\times\PP^1,\sigma_S)$}}}}%
    \put(0.69939536,0.8772017){\color[rgb]{0,0,0}\makebox(0,0)[lb]{\smash{\SB{$\bar{s}$}}}}%
    \put(0.69570526,0.4218377){\color[rgb]{0,0,0}\makebox(0,0)[lb]{\smash{\SB{$s$}}}}%
    \put(0.49490448,0.76968148){\color[rgb]{0,0,0}\makebox(0,0)[lb]{\smash{\SB{$\overline{f_p}$}}}}%
    \put(0.48805302,0.56492808){\color[rgb]{0,0,0}\makebox(0,0)[lb]{\smash{\SB{$f_p$}}}}%
    \put(0.8870193,0.78707367){\color[rgb]{0,0,0}\makebox(0,0)[lb]{\smash{\SB{$f_{\bar p}$}}}}%
    \put(0.92180367,0.54358306){\color[rgb]{0,0,0}\makebox(0,0)[lb]{\smash{\SB{$\overline{f_{\bar p}}$}}}}%
    \put(0.34364517,0.47770352){\color[rgb]{0,0,0}\makebox(0,0)[lb]{\smash{\SB{$p,\bar{p}$}}}}%
    \put(0,0){\includegraphics[width=\unitlength,page=3]{fig06.pdf}}%
    \put(0.02241675,0.0628082){\color[rgb]{0,0,0}\makebox(0,0)[lb]{\smash{\SB{$p$}}}}%
    \put(0,0){\includegraphics[width=\unitlength,page=4]{fig06.pdf}}%
    \put(0.92180364,0.07399395){\color[rgb]{0,0,0}\makebox(0,0)[lb]{\smash{\SB{$\PP^1$}}}}%
    \put(0.31307709,0.35226905){\color[rgb]{0,0,0}\makebox(0,0)[lb]{\smash{\SB{$\bar{p}$}}}}%
    \put(0.71594331,0.25726366){\color[rgb]{0,0,0}\makebox(0,0)[lb]{\smash{\SB{$\pi_{[2]}$}}}}%
    \put(0,0){\includegraphics[width=\unitlength,page=5]{fig06.pdf}}%
    \put(0.26090053,0.00442524){\color[rgb]{0,0,0}\makebox(0,0)[lb]{\smash{\SB{$f_{\bar p}$}}}}%
    \put(0.03480208,0.00442525){\color[rgb]{0,0,0}\makebox(0,0)[lb]{\smash{\SB{$f_p$}}}}%
    \put(0.38264582,0.10877838){\color[rgb]{0,0,0}\makebox(0,0)[lb]{\smash{\SB{$\overline{f_{\bar p}}$}}}}%
    \put(0.36525367,0.31748463){\color[rgb]{0,0,0}\makebox(0,0)[lb]{\smash{\SB{$\overline{f_p}$}}}}%
  \end{picture}%
\endgroup%
\\
\end{minipage}
\begin{minipage}{.45\textwidth}
{\scriptsize
\[\xymatrix{&([x_0:x_1:x_2],[y_0:y_1:y_2])\ar[ddddl]_{p,\bar{p}}\ar[dddd]^{\pi_{[2]}}\\ &\\&\\ &\\([x_0:x_2],[x_2:x_1])& [x_0:x_1]\\([y_2:y_0],[y_1:y_2])\ar@{=}[u]&[y_1:y_0]\ar@{=}[u]}\quad\]
}
\end{minipage}\vspace{-1em}
\captionof{figure}{The real conic bundle $\pi_{[2]}\colon X_{[2]}\rightarrow\PP^1$.}\label{fig:CBX_2}
\vskip\baselineskip

The following lemma gives a necessary condition for a real conic bundle $\pi\colon X\rightarrow\PP^1$ to be relatively $\Aut_\R(X,\pi))$-minimal. It will turn out in Proposition~\ref{prop:CBX_2} that the condition is also sufficient. A reference picture is drawn in Figure~\ref{fig:CBX_2 C}. 

\begin{Lem}\label{lem:CBDP}
Let $\pi\colon X\rightarrow\PP^1$ be a relatively $\Aut_\R(X,\pi)$-minimal real conic bundle with a birational morphism of real conic bundles $\eta\colon X\rightarrow X_{[2]}$ that is not an isomorphism. Then $\Aut_\R(X,\pi)$ is finite, or $\eta\colon X\rightarrow X_{[2]}$ is the blow-up of $n\geq1$ pairs of non-real conjugate points of $X_{[2]}$ contained in $s\cup\bar{s}$ and in non-real fibres.
\end{Lem}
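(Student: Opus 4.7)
My plan is to invoke Lemma \ref{lem:blowup CB} with $Y = X_{[2]}$ and split into the two cases it provides, depending on whether the subgroup $G = \Aut_\R(X/\pi) \cap \ker\bigl(\Aut_\R(X,\pi) \to \Aut(\Pic(X))\bigr)$ is trivial or not.

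If $G$ is trivial, then $\Aut_\R(X/\pi) \simeq (\Z/2\Z)^r$ is finite, so it suffices to prove finiteness of the image of $\alpha\colon \Aut_\R(X,\pi)\to\mathrm{PGL}_2(\R)$ from the exact sequence $(\ref{stern})$. Because $\eta$ is not an isomorphism and respects the conic bundle structures, it must blow up at least one point on a smooth fibre of $\pi_{[2]}$ (blow-ups on a singular fibre would produce a chain of rational components and destroy relative minimality). The singular fibres of $\pi$ therefore lie over the two images $0,\infty$ coming from $\pi_{[2]}$ plus at least one additional real point or non-real conjugate pair, so over a set of at least three points of $\PP^1$. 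The image of $\alpha$ preserves this finite set, hence is finite, and so $\Aut_\R(X,\pi)$ is finite.

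If $G$ is non-trivial, Lemma \ref{lem:blowup CB} produces, over $\C$, a conic bundle morphism $\eta'\colon X \to \FF_n$ blowing up $2n$ points on a section of self-intersection $n$, together with two distinct $(-n)$-sections $\tilde E_n$ and $h(\tilde E_n)$ of $\pi$, exchanged by some $h \in \Aut_\R(X,\pi)$ and each pointwise fixed by $G$. The key step is to identify $\{\tilde E_n, h(\tilde E_n)\}$ with $\{\tilde s, \tilde{\bar s}\}$, where $\tilde s, \tilde{\bar s}$ are the strict transforms in $X$ of the two $(-1)$-sections $s, \bar s$ of $\pi_{[2]}$ described in Lemma \ref{desc:X_2}. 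Since $G$ acts trivially on $\Pic(X)$, it preserves set-wise every curve of negative self-intersection, so in particular it preserves $\tilde s$ and $\tilde{\bar s}$. A non-trivial element of $G$ has at most two fixed points on a general smooth fibre $F$, so the fixed locus on $F$ necessarily coincides with $\{\tilde s \cap F, \tilde{\bar s} \cap F\} = \{\tilde E_n \cap F, h(\tilde E_n) \cap F\}$, whence the claimed identification of sections.

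From $\tilde s^2 = -n$ and the reality of $\eta$, I would then read off that $\eta$ blows up $n-1$ points on each of $s$ and $\bar s$; comparing the total $2n$ of singular fibres of $X$ with the $2$ already present in $X_{[2]}$ shows that these $2(n-1)$ points account for every blow-up of $\eta$, so each blown-up point lies on $s \cup \bar s$ (in particular $n \geq 2$ since $\eta$ is not an isomorphism, and I relabel $n-1$ as the number of pairs asserted in the statement). Because $s \cap \bar s = \emptyset$ by direct inspection in Lemma \ref{desc:X_2}, no real point lies on $s$, so the blown-up points form non-real conjugate pairs $(p, \bar p)$ with $p \in s$, $\bar p \in \bar s$. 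Finally, two points on the same smooth fibre cannot both be blown up while keeping a conic bundle structure, which forces $\pi_{[2]}(p) \neq \pi_{[2]}(\bar p)$, i.e. $p$ and $\bar p$ lie on non-real conjugate fibres. The main obstacle is precisely the identification $\{\tilde E_n, h(\tilde E_n)\} = \{\tilde s, \tilde{\bar s}\}$, which bridges the abstract output of Lemma \ref{lem:blowup CB} and the intrinsic geometry of $X_{[2]}$; once that is in place, everything else is a count of singular fibres together with reality constraints.
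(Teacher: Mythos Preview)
Your proof is correct and follows the same architecture as the paper's: invoke Lemma~\ref{lem:blowup CB} and split on whether $G$ is trivial. The one substantive difference is in the non-trivial case, where you identify $\{\tilde E_n,h(\tilde E_n)\}=\{\tilde s,\tilde{\bar s}\}$ by counting fixed points of $G$ on a general fibre; the paper instead observes that the two $(-n)$-sections of $X$ must be either both real or a conjugate pair, pushes them down via $\eta$ to sections of $X_{[2]}$, and uses that $X_{[2]}$ has no real sections to force them to be conjugate and hence equal to $\{s,\bar s\}$. Both routes work; yours is more internal to $X$, while the paper's exploits the specific feature of $X_{[2]}$ that its generic fibre is a non-rational conic. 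In the trivial case your ``three points suffice'' argument is slightly cleaner than the paper's, which additionally invokes the interval $\pi(X(\R))=[0,\infty]$. One phrasing nit: in your last line, the obstruction to blowing up both $p$ and $\bar p$ on the same fibre is not that it destroys the conic-bundle structure but that it produces a three-component fibre, contradicting relative $\Aut_\R(X,\pi)$-minimality.
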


\begin{proof}  
The morphism $\eta$ blows up at least one point, so $X$ has at least four singular fibres. Let $G:=\Aut_\R(X/\pi)\cap\left(\ \Aut_\R(X,\pi)\rightarrow\Aut(\Pic(X))\ \right)$.\par
Suppose that $G$ is non-trivial. By Lemma~\ref{lem:blowup CB}, there exists a non-real birational morphism $\eta'\colon X\rightarrow\FF_n$, $n\geq1$, of conic bundles defined over $\C$ which blows up $2n\geq2$ points on a section $s'$ disjoint from the exceptional section $E_n$ of $\FF_n$ with $(s')^2=n$. \par
Denote by $\tilde{s}'$ and $\tilde{E}_n$ the strict transforms of $s'$ and $E_n$ respectively. Note that they are the unique $(-n)$-curves on $X$ and hence are two real or a pair of non-real conjugate curves. They descend via $\eta\colon X\rightarrow X_{[2]}$ onto curves $c_1$ and $c_2$ on $X_{[2]}$. As $X_{[2]}$ does not have any real sections, we get $c_2=\bar{c}_1$, and hence $\tilde{s}'=\overline{\tilde{E}_n}$. In particular, the real morphism $\eta$ contracts $n-1$ components of singular fibres only intersecting $\tilde{s}'$ and $n-1$ components of singular fibres only intersecting $\tilde{E}_n$. In other words, $\eta$ blows up $n\geq1$ pairs of non-real conjugate points contained in $c_1\cup \bar{c}_2$, no two on the same fibre, so $c_1^2=-1$. In particular, $c_1\cup \bar{c}_1=s\cup\bar{s}$.\par
Suppose that $G$ is trivial. Lemma~\ref{lem:blowup CB} implies that $\Aut_\R(X/\pi)$ is isomorphic to $(\Z/2\Z)^r$ for $r\in\{0,1,2\}$. The group $\Aut_\R(X,\pi)$ preserves $X(\R)$, hence its image $H$ in $\mathrm{PGL}_2(\R)$ preserves $\pi(X(\R))=\pi_{[2]}(X_{[2]}(\R))=[0,\infty]$, i.e. 
\[H\subset\Aut_\R(\PP^1,[0,\infty])\simeq\R_{>0}\ltimes\Z/2\Z\]
Furthermore, $H$ preserves the set of the images in $\PP^1$ of the singular fibres of $X$, of which there are at least four. This implies that $H$ is finite. As $\Aut_\R(X/\pi)$ and $H$ are both finite, also $\Aut_\R(X,\pi)$ is finite.
\end{proof}

\begin{Def}[and construction]\label{def:sigmaC}
The abstract birational morphism
\begin{align*}
\varepsilon\colon X_{[2]}&\longrightarrow\PP^1\times\PP^1\\
([x_0:x_1:x_2],[y_0:y_1:y_2])&\ \longmapsto\ ([x_0:x_1],[x_2:x_0])=([y_1:y_0],[y_0:y_2])\\
([u_0v_1:u_1v_1:u_0v_0],[u_1v_0:u_0v_0:u_1v_1])&\ \dashmapsfrom\ ([u_0:u_1],[v_0,v_1])
\end{align*}
contracts the components $f_p$ and $f_{\bar p}$ of the singular fibres onto the points $([0:1],[1:0])$ and $([1:0],[0:1])$, respectively, and the sections $s,\bar{s}$ onto the sections $\varepsilon(s)=\PP^1\times\{[1:0]\}$ and $\varepsilon(\bar{s})=\PP^1\times\{[0:1]\}$. The antiholomorphic involution $\sigma_{[2]}$ descends to a rational antiholomorphic involution
\[\sigma_C\colon ([u_0:u_1],[v_0:v_1])\dashmapsto([\overline{u_0}:\overline{u_1}],[\overline{u_1v_1}:\overline{u_0v_0}])\]
on $\PP^1\times\PP^1$, not defined at $([1:0],[0:1])$ and $([0:1],[1:0])$. It makes $\varepsilon$ a real birational morphism of conic bundles, i.e. the diagram
\[\xymatrix{(X_{[2]},\sigma_{[2]})\ar[rd]_{\pi_{[2]}}\ar[rr]^{\varepsilon}&&(\PP^1\times\PP^1,\sigma_C)\ar[ld]^{\pr_1}\\ &\PP^1&}\]
is commutative. The construction is visualised in Figure~\ref{fig:CBX_2 C}.
\end{Def}

\begin{minipage}{0.98\textwidth}
\centering
\def\svgwidth{0.6\columnwidth}
\begingroup%
  \makeatletter%
  \providecommand\color[2][]{%
    \errmessage{(Inkscape) Color is used for the text in Inkscape, but the package 'color.sty' is not loaded}%
    \renewcommand\color[2][]{}%
  }%
  \providecommand\transparent[1]{%
    \errmessage{(Inkscape) Transparency is used (non-zero) for the text in Inkscape, but the package 'transparent.sty' is not loaded}%
    \renewcommand\transparent[1]{}%
  }%
  \providecommand\rotatebox[2]{#2}%
  \ifx\svgwidth\undefined%
    \setlength{\unitlength}{682.14630367bp}%
    \ifx\svgscale\undefined%
      \relax%
    \else%
      \setlength{\unitlength}{\unitlength * \real{\svgscale}}%
    \fi%
  \else%
    \setlength{\unitlength}{\svgwidth}%
  \fi%
  \global\let\svgwidth\undefined%
  \global\let\svgscale\undefined%
  \makeatother%
  \begin{picture}(1,0.82159904)%
    \put(0,0){\includegraphics[width=\unitlength,page=1]{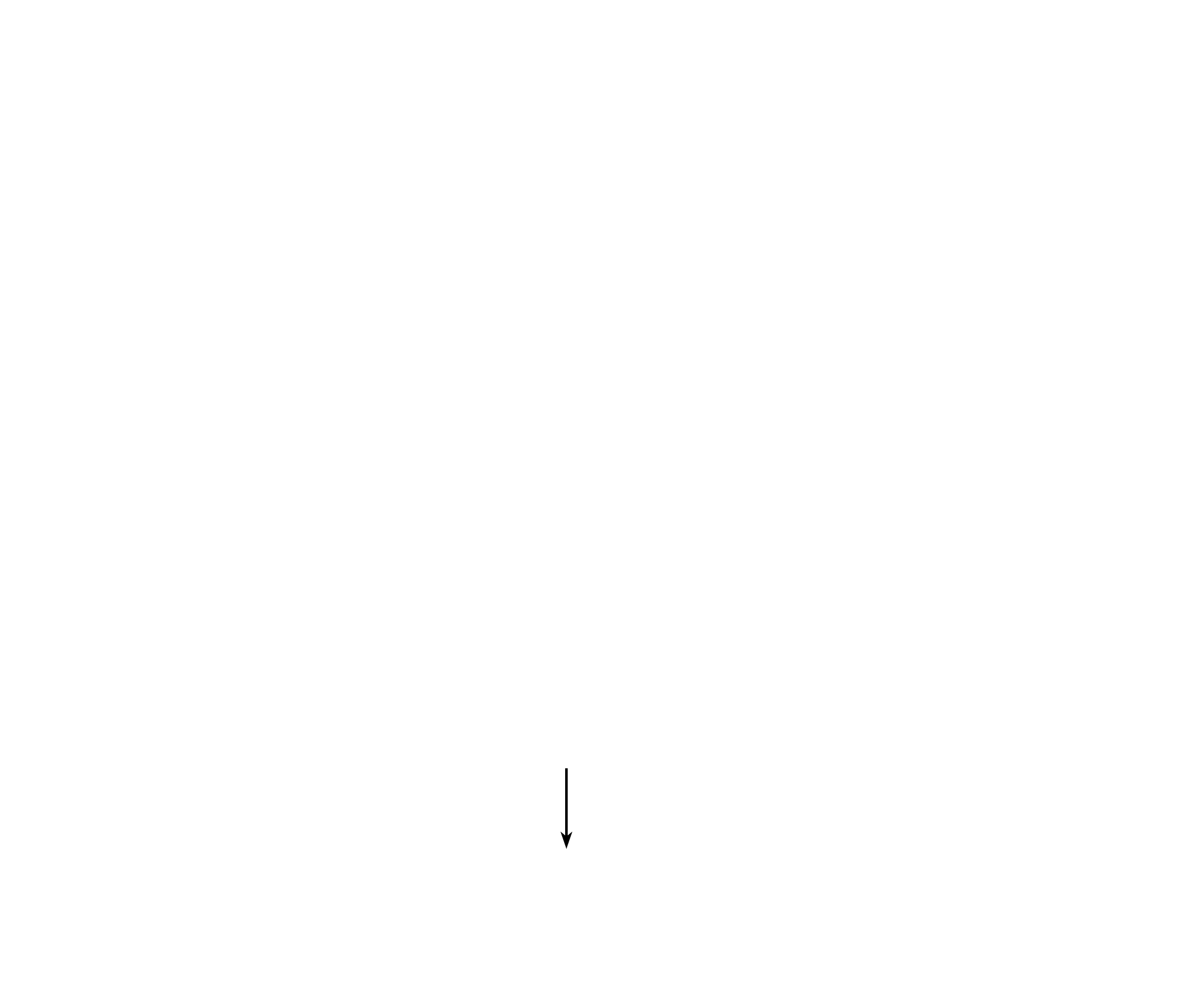}}%
    \put(0.59363361,0.253346){\color[rgb]{0,0,0}\makebox(0,0)[lb]{\smash{\SB{$\overline{f_{\bar p}}$}}}}%
    \put(0.00303435,0.27270655){\color[rgb]{0,0,0}\makebox(0,0)[lb]{\smash{\SB{$(\PP^1\times\PP^1,\sigma_S)$}}}}%
    \put(0,0){\includegraphics[width=\unitlength,page=2]{fig05.pdf}}%
    \put(0.00586549,0.0513693){\color[rgb]{0,0,0}\makebox(0,0)[lb]{\smash{\SB{$p$}}}}%
    \put(0.15823079,0.19622068){\color[rgb]{0,0,0}\makebox(0,0)[lb]{\smash{\SB{$\bar{p}$}}}}%
    \put(0.48544876,0.15043439){\color[rgb]{0,0,0}\makebox(0,0)[lb]{\smash{\SB{$\pi_{[2]}$}}}}%
    \put(0,0){\includegraphics[width=\unitlength,page=3]{fig05.pdf}}%
    \put(0.2029505,0.35065194){\color[rgb]{0,0,0}\makebox(0,0)[lb]{\smash{\SB{$(X_{[2]},\sigma_{[2]})$}}}}%
    \put(0.6540273,0.28763757){\color[rgb]{0,0,0}\makebox(0,0)[lb]{\smash{\SB{$\varepsilon$}}}}%
    \put(0,0){\includegraphics[width=\unitlength,page=4]{fig05.pdf}}%
    \put(0.45495669,0.1975271){\color[rgb]{0,0,0}\makebox(0,0)[lb]{\smash{\SB{$s$}}}}%
    \put(0.59515211,0.43844607){\color[rgb]{0,0,0}\makebox(0,0)[lb]{\smash{\SB{$f_{\bar p}$}}}}%
    \put(0,0){\includegraphics[width=\unitlength,page=5]{fig05.pdf}}%
    \put(0.46096341,0.50760031){\color[rgb]{0,0,0}\makebox(0,0)[lb]{\smash{\SB{$\bar{s}$}}}}%
    \put(0.32295298,0.43669825){\color[rgb]{0,0,0}\makebox(0,0)[lb]{\smash{\SB{$\overline{f_p}$}}}}%
    \put(0.33481843,0.25222234){\color[rgb]{0,0,0}\makebox(0,0)[lb]{\smash{\SB{$f_p$}}}}%
    \put(0,0){\includegraphics[width=\unitlength,page=6]{fig05.pdf}}%
    \put(0.44193519,0.46648648){\color[rgb]{0,0,0}\makebox(0,0)[lb]{\smash{\SB{$\bar{q}$}}}}%
    \put(0.51863295,0.24932366){\color[rgb]{0,0,0}\makebox(0,0)[lb]{\smash{\SB{$q$}}}}%
    \put(0.22971464,0.28689106){\color[rgb]{0,0,0}\makebox(0,0)[lb]{\smash{\SB{$p,\bar{p}$}}}}%
    \put(0,0){\includegraphics[width=\unitlength,page=7]{fig05.pdf}}%
    \put(0.52103045,0.39873814){\color[rgb]{0,0,0}\makebox(0,0)[lb]{\smash{\SB{$F_q$}}}}%
    \put(0.44352578,0.40077773){\color[rgb]{0,0,0}\makebox(0,0)[lb]{\smash{\SB{$F_{\bar q}$}}}}%
    \put(0,0){\includegraphics[width=\unitlength,page=8]{fig05.pdf}}%
    \put(0.19581564,0.80509575){\color[rgb]{0,0,0}\makebox(0,0)[lb]{\smash{\SB{$\bar{s}'$}}}}%
    \put(0.10267637,0.7362333){\color[rgb]{0,0,0}\makebox(0,0)[lb]{\smash{\SB{$\overline{f_p}$}}}}%
    \put(0.09373465,0.5914802){\color[rgb]{0,0,0}\makebox(0,0)[lb]{\smash{\SB{$f_p$}}}}%
    \put(0.19796731,0.49743297){\color[rgb]{0,0,0}\makebox(0,0)[lb]{\smash{\SB{$s'$}}}}%
    \put(0.40369601,0.60233116){\color[rgb]{0,0,0}\makebox(0,0)[lb]{\smash{\SB{$\overline{f_{\bar p}}$}}}}%
    \put(0.37487549,0.74336604){\color[rgb]{0,0,0}\makebox(0,0)[lb]{\smash{\SB{$f_{\bar p}$}}}}%
    \put(0,0){\includegraphics[width=\unitlength,page=9]{fig05.pdf}}%
    \put(0.28749648,0.70977691){\color[rgb]{0,0,0}\makebox(0,0)[lb]{\smash{\SB{$F_q$}}}}%
    \put(0.19877402,0.60371777){\color[rgb]{0,0,0}\makebox(0,0)[lb]{\smash{\SB{$F_{\bar q}$}}}}%
    \put(0.20285319,0.70977689){\color[rgb]{0,0,0}\makebox(0,0)[lb]{\smash{\SB{$E_{\bar q}$}}}}%
    \put(0.27933813,0.60677718){\color[rgb]{0,0,0}\makebox(0,0)[lb]{\smash{\SB{$E_q$}}}}%
    \put(0.41403845,0.56365045){\color[rgb]{0,0,0}\makebox(0,0)[lb]{\smash{\SB{$\eta$}}}}%
    \put(0.81573401,0.28608355){\color[rgb]{0,0,0}\makebox(0,0)[lb]{\smash{\SB{$(\PP^1\times\PP^1,\sigma_C)$}}}}%
    \put(0,0){\includegraphics[width=\unitlength,page=10]{fig05.pdf}}%
    \put(0.35185977,0.10089914){\color[rgb]{0,0,0}\makebox(0,0)[lb]{\smash{\SB{$\PP^1$}}}}%
    \put(0,0){\includegraphics[width=\unitlength,page=11]{fig05.pdf}}%
    \put(0.49107926,0.05400399){\color[rgb]{0,0,0}\makebox(0,0)[lb]{\smash{\SB{$\pi(q)$}}}}%
    \put(0.42115245,0.05474505){\color[rgb]{0,0,0}\makebox(0,0)[lb]{\smash{\SB{$\pi(\bar{q})$}}}}%
    \put(0.56858528,0.05578076){\color[rgb]{0,0,0}\makebox(0,0)[lb]{\smash{\SB{$([0:1],[1:0])$}}}}%
    \put(0.90070321,0.25209129){\color[rgb]{0,0,0}\makebox(0,0)[lb]{\smash{\SB{$([1:0],[0:1])$}}}}%
    \put(0,0){\includegraphics[width=\unitlength,page=12]{fig05.pdf}}%
    \put(0.93809674,0.227275){\color[rgb]{0,0,0}\makebox(0,0)[lb]{\smash{\SB{$\varepsilon(\bar{s})$}}}}%
    \put(0.94382788,0.08214107){\color[rgb]{0,0,0}\makebox(0,0)[lb]{\smash{\SB{$\varepsilon(s)$}}}}%
    \put(0.84652213,0.05935697){\color[rgb]{0,0,0}\makebox(0,0)[lb]{\smash{\SB{$\varepsilon(q)$}}}}%
    \put(0.74827711,0.24866249){\color[rgb]{0,0,0}\makebox(0,0)[lb]{\smash{\SB{$\varepsilon(\bar{q})$}}}}%
    \put(0.83052197,0.03730356){\color[rgb]{0,0,0}\makebox(0,0)[lb]{\smash{\SB{$F_q$}}}}%
    \put(0.77341319,0.03730356){\color[rgb]{0,0,0}\makebox(0,0)[lb]{\smash{\SB{$F_{\bar q}$}}}}%
  \end{picture}%
\endgroup%
\\
\captionof{figure}{The real birational morphism $\varepsilon\colon(X_{[2]},\sigma_{[2]})\rightarrow(\PP^1\times\PP^1,\sigma_C)$.}\label{fig:CBX_2 C}
\end{minipage}

\begin{Prop}\label{prop:CBX_2}
Let $\eta\colon X\rightarrow X_{[2]}$ be the blow-up of $n\geq1$ pairs of non-real conjugate points in $s\cup \bar{s}$ and in non-real fibres. Then $\pi:=\pi_{[2]}\eta\colon X\rightarrow\PP^1$ is a relatively $\Aut_\R(X,\pi))$-minimal real conic bundle. \par
Let $\Delta\subset\PP^1$ be the image of the $2n+2$ singular fibres of $X$ and $H_{\Delta}\subset\mathrm{PGL}_2(\R)$ be the subgroup preserving $\Delta$ and $\pi(X(\R))=[0,\infty]$. Then
\begin{enumerate}
\item\label{CBX_2 1} there exists a split exact sequence
\[1\rightarrow\Aut(X/\pi)\rightarrow\Aut(X,\pi)\rightarrow H_{\Delta}\rightarrow 1\]
where $\Aut(X/\pi)\simeq\Aut_\R(\QQ_{3,1},p,\bar{p})\rtimes\Z/2\Z\simeq\mathrm{SO}_2(\R)\rtimes\Z/2\Z$,
\item\label{CBX_2 2} an element of $\mathrm{SO}_2(\R)\subset\Aut(X/\pi)$ fixes the two $\left(-(n+1)\right)$-sections of $X$ and the generator of $\Z/2\Z$ exchanges them,
\item\label{CBX_2 3} an element of $\Aut(X/\pi)\setminus\mathrm{SO}_2(\R)$ is an involution fixing an irreducible curve on $X_{[2]}$ which is a double cover of $\PP^1$ ramified at $\Delta$,
\item\label{CBX_2 4} the group $\mathrm{SO}_2(\R)$ acts trivially on $\mathrm{Pic}(X)$. 
\end{enumerate}
\end{Prop}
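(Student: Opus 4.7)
The plan is to prove Proposition~\ref{prop:CBX_2} by first establishing the continuous $\mathrm{SO}_2(\R)$-action inside the kernel, then using Lemma~\ref{lem:blowup CB} to produce the order-two generator that exchanges the $(-(n+1))$-sections, and finally identifying the image of the base action $\alpha\colon \Aut_\R(X,\pi)\to\mathrm{PGL}_2(\R)$ with $H_\Delta$ together with a splitting.

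First I would identify the generic fibre of $\pi_{[2]}\colon X_{[2]}\to\PP^1$ with the conic $x^2+y^2=tz^2$ over $\R(t)$, on which the two non-real sections $s,\bar s$ correspond to $[1:\pm{\bf i}:0]$. A direct computation shows that the $\R(t)$-automorphisms of this conic that fix both $s$ and $\bar s$ pointwise are given by $(x,y,z)\mapsto(\cos\theta\cdot x-\sin\theta\cdot y,\ \sin\theta\cdot x+\cos\theta\cdot y,z)$ with $\cos^2\theta+\sin^2\theta=1$. Since $(\cos\theta,\sin\theta)$ is a morphism from $\PP^1$ into the affine group $\mathrm{SO}_2$, it must be constant, so the full subgroup is $\mathrm{SO}_2(\R)$, acting regularly on $X_{[2]}$. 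Because $s\cup\bar s$ is fixed pointwise by this action and the blown-up centres lie on $s\cup\bar s$, the $\mathrm{SO}_2(\R)$-action lifts to $X$, giving the $\mathrm{SO}_2(\R)$ half of (\ref{CBX_2 1}) and the first statement of (\ref{CBX_2 2}). Moreover, each element of $\mathrm{SO}_2(\R)$ preserves setwise every exceptional curve $E_{q_i}$, every curve $\widetilde{F_i}$, both sections, and the general fibre class, hence acts trivially on $\Pic(X)$, which proves (\ref{CBX_2 4}).

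To produce the $\Z/2\Z$-generator I would apply Lemma~\ref{lem:blowup CB} with $G:=\Aut_\R(X/\pi)\cap\ker(\Aut_\R(X,\pi)\to\Aut(\Pic(X)))$; by the previous paragraph $\mathrm{SO}_2(\R)\subset G$, so $G$ is non-trivial. The lemma then furnishes a complex birational morphism $X\to\FF_m$ blowing up $2m$ points on a section $s'$ with $(s')^2=m$ disjoint from the exceptional section, such that the strict transforms $\widetilde{s'}$ and $\widetilde{E_m}$ (both $(-m)$-curves on $X$) are exchanged by an element of $\Aut_\R(X,\pi)$. Counting self-intersections identifies $\{\widetilde{s'},\widetilde{E_m}\}=\{\widetilde s,\widetilde{\bar s}\}$ and therefore $m=n+1$, so this yields an element $\iota\in\Aut_\R(X,\pi)$ swapping the two $(-(n+1))$-sections. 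I would then verify that $\iota$ lies in the kernel of $\alpha$: since any $g\in\Aut_\R(X,\pi)$ fixing $\widetilde s\cup\widetilde{\bar s}$ setwise and swapping them induces the identity on the set of $t$-values $\pi(\widetilde s\cap\pi^{-1}(t))$, it must act trivially on the base; after multiplying by a suitable element of $\mathrm{SO}_2(\R)$ and squaring I adjust $\iota$ to an involution. This gives the $\Z/2\Z$ factor of (\ref{CBX_2 1}) and the second statement of (\ref{CBX_2 2}). Relative $\Aut_\R(X,\pi)$-minimality follows at once: the two components of each singular fibre are distinguished by which of $\widetilde s$ or $\widetilde{\bar s}$ meets them, and $\iota$ exchanges them, so no equivariant contraction of a component is possible.

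For the image of $\alpha$, any element of $\Aut_\R(X,\pi)$ permutes the $2n+2$ singular fibres and preserves $\pi(X(\R))=[0,\infty]$, hence its image on $\PP^1$ lies in $H_\Delta$. Conversely, given $h\in H_\Delta$, I lift it through the structure of $X_{[2]}$: since $H_\Delta$ preserves $\{0,\infty\}$, it preserves the two singular fibres of $X_{[2]}$ and the unordered pair $\{s,\bar s\}$; choosing the unique lift $\widetilde h\in\Aut_\R(X_{[2]}/\pi_{[2]})$ that preserves $s$ and $\bar s$ individually (using the isomorphism $\varepsilon$ of Definition~\ref{def:sigmaC} to work on $\PP^1\times\PP^1$), and using that $h$ permutes the pairs of conjugate $t$-values of the blow-up centres among themselves, one checks that $\widetilde h$ permutes the centres and so lifts to $X$. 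The uniqueness of this lift makes $h\mapsto\widetilde h$ a group homomorphism, providing the required section and proving (\ref{CBX_2 1}). Part (\ref{CBX_2 3}) I would deduce by analysing the fixed locus of the involution on the birational model $X_{[2]}$: the involution exchanges $s,\bar s$ fibrewise, and its fixed points in each smooth fibre form a pair of non-real conjugate points swept out as $t$ varies over $\PP^1$; this traces out an irreducible curve on $X_{[2]}$ whose projection to $\PP^1$ is a double cover ramified exactly at the $t$-values of the $2n+2$ singular fibres, i.e.\ at $\Delta$.

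The main obstacle I expect is the second step: extracting the involution from Lemma~\ref{lem:blowup CB} and verifying that it can be chosen in $\Aut_\R(X/\pi)$ (rather than just in $\Aut_\R(X,\pi)$), since the lemma only guarantees existence of a swapping element in the larger group. The care needed is to show that any such swap acts trivially on $\PP^1$ and that after normalising by $\mathrm{SO}_2(\R)$ it has order two.
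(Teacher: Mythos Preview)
Your second step is circular. Lemma~\ref{lem:blowup CB} has relative $\Aut_\R(X,\pi)$-minimality as a \emph{hypothesis}, and inside its proof the element exchanging the two $(-m)$-sections is obtained precisely from the existence of some $h\in\Aut_\R(X,\pi)$ swapping the components of a singular fibre---which is exactly what relative minimality guarantees. You invoke the lemma to produce the swapping element $\iota$ and then use $\iota$ to deduce relative minimality; this begs the question. The paper breaks the circle by \emph{explicitly constructing} the swapping involution: it passes via the contraction $\varepsilon\colon (X_{[2]},\sigma_{[2]})\to(\PP^1\times\PP^1,\sigma_C)$ of Definition~\ref{def:sigmaC} and writes down the birational involution
\[
\varphi\colon ([u_0:u_1],[v_0:v_1])\dashmapsto([u_0:u_1],[u_1v_1\overline{P}(u_0,u_1):u_0v_0P(u_0,u_1)]),
\]
where $P$ vanishes at the images $p_i$ of the blow-up centres. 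One checks directly that $\varphi$ commutes with $\sigma_C$, is undefined exactly at the right points, and exchanges $\varepsilon(s)$ and $\varepsilon(\bar s)$; its lift to $X$ is then the desired element of $\Aut_\R(X/\pi)$, and only \emph{after} that does one conclude relative minimality. Having the explicit formula also makes part~(\ref{CBX_2 3}) immediate: the fixed curve of $(a,\varphi)$ is read off as $au_0v_0^2P-u_1v_1^2\overline{P}=0$, visibly a double cover of $\PP^1$ ramified at $\Delta$.

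There is a second, smaller gap: your argument that $\iota$ lies in $\Aut_\R(X/\pi)$ does not work as stated. An element swapping $\tilde s$ and $\tilde{\bar s}$ can perfectly well act non-trivially on the base (both sections project isomorphically onto $\PP^1$, so swapping them imposes no constraint on the base action). You would need to compose with the lift of the inverse base action coming from your splitting, and then check that the result still swaps the sections and can be normalised to an involution---but once you have an explicit $\varphi$ as above, which is visibly the identity on the $[u_0:u_1]$ factor, this issue disappears. Your identification of the $\mathrm{SO}_2(\R)$ part and the splitting over $H_\Delta$ are along the same lines as the paper's.
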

\begin{proof}
Any automorphism of $X$ preserves the set of real points $X(\R)$, which is diffeomorphic via $\eta$ to $X_{[2]}(\R)$ and is mapped to the interval $[0,\infty]$ by $\pi$. Therefore, the exact sequence (\ref{stern}) yields the exact sequence (\ref{CBX_2 1}). Any element of $H_{\Delta}$ lifts to a real automorphism of $X_{[2]}$ fixing the points blown-up by $\eta$ and thus lifts to an automorphism of $X$. The sequence splits.\par
Over $\C$, there is a birational morphism $X\stackrel{\eta}\rightarrow X_{[2]}\rightarrow\FF_0$, hence $X$ has exactly two $(-(n+1))$-sections \cite[Lemma 4.3.1]{B10}, and they are the strict transforms $s',\bar{s}'$ of the $(-1)$-curves $s,\bar{s}$ on $X_{[2]}$. So $\Aut_\R(X/\pi)$ acts on $\{s',\bar{s}'\}$ and we claim that it acts non-trivially; we now construct a birational involution of $X_{[2]}$ whose lift onto $X$ is an automorphism respecting $\pi$ and exchanging $s',\bar{s}'$. \par
Let $q_1,\dots,q_n\in s$ and $\overline{q_1},\dots,\overline{q_n}\in \bar{s}$ be the points blown up by $\eta$, and define $p_i:=\pi_{[2]}(q_i)$. Let $\varepsilon\colon (X_{[2]},\sigma_{[2]})\rightarrow(\PP^1\times\PP^1,\sigma_C)$ be the birational morphism of real conic bundles given in Definition~\ref{def:sigmaC}. Then $\varepsilon(q_i)=(p_i,[1:0])$ and $\varepsilon(\overline{q_i})=(\overline{p_i},[0:1])$. Let $m_1,\dots,m_n\in\C[u_0,u_1]$ be homogenous linear polynomials vanishing on $p_1,\dots,p_n$ respectively and define $P(u_0,u_1):=\prod_{i=1}^nm_i(u_0,u_1)$. 
The involution $\varphi\colon\PP^1\times\PP^1\dashrightarrow\PP^1\times\PP^1$,
\[\varphi\colon ([u_0:u_1],[v_0:v_1])\dasharrow([u_0:u_1],[u_1v_1\overline{P}(u_0,u_1):u_0v_0P(u_0,u_1)])\]
commutes with the antimeromorphic involution $\sigma_C$ and is undefined exactly at $\varepsilon(q_1),\dots,\varepsilon(q_n)$, $\varepsilon(\overline{q_1}),\dots,\varepsilon(\overline{q_n})$, $([1:0],[0:1])$, $([0:1],[1:0])$ and exchanges $\varepsilon(s)$ and $\varepsilon(\bar{s})$. The map $\varphi$ is visualised in Figure~\ref{fig:CBX_2 phi}. \par
The involution $\varphi$ thus lifts via $\varepsilon$ to a real birational involution of $X_{[2]}$ that exchanges $s,\bar{s}$ and is undefined exactly at $q_1,\dots,q_n$, $\overline{q_1},\dots,\overline{q_n}$. So, it lifts to a real automorphism of $X$ that exchanges $s',\bar{s}'$. Therefore $\Aut_\R(X/\pi)$ acts non-trivially on the set $\{s',\bar{s}'\}$, which yields the split exact sequence
\[1\rightarrow K\rightarrow\Aut_\R(X/\pi)\rightarrow\Z/2\Z\rightarrow0.\]
This also shows that we cannot contract any components of the singular fibres $\Aut_\R(X/\pi)$-equivariantly, and hence also not $\Aut_\R(X,\pi)$-equivariantly. In particular, $\pi\colon X\rightarrow\PP^1$ is relatively $\Aut_\R(X,\pi))$-minimal.\par
By definition of $K$, all of its elements fix $s_1',s_2'$ and thus descend to a subgroup of $\Aut_\R(X_{[2]}/\pi_{[2]})$ and hence via $\pr_1\colon X_{[2]}\rightarrow\QQ_{3,1}$ to a subgroup of $\Aut(\QQ_{3,1},p,\bar{p})$, the automorphism group of $\QQ_{3,1}$ fixing $p$ and $\bar{p}$, which is isomorphic to $\{(A,\bar{A})\in\mathrm{PGL}_2(\C)^2\mid A\ \text{diagonal}\}$ via $\QQ_{3,1}\simeq (\PP^1\times\PP^1,\sigma_S)$ \cite[Lemma 4.5]{R15}. On the other hand, any $(\mathrm{diag}(a,1),\mathrm{diag}(\bar{a},1))\in\Aut(\QQ_{3,1},p,\bar{p})$ lifts to the real automorphism 
\[\beta_a\colon([x_0:x_1:x_2],[y_0:y_1:y_2])\mapsto([a\bar{a}x_0:x_1:\bar{a}x_2],[y_0:a\bar{a}y_1:a y_2])\]
of $X_{[2]}$ which fixes $s$ and $\bar{s}$. On them, it acts by $[y_0:y_1]\mapsto[y_0:a\bar{a}y_1]$ and $[x_0:x_1]\mapsto[a\bar{a}x_0:x_1]$, respectively. Hence, $\beta_a$ lifts to an automorphism of $X$ if and only if it fixes the points blown up by $\eta$ (there is at least one), which is equivalent to $a\bar{a}=1$. The lift of $\beta_a$ on $X$ then descends via $\pi_{[2]}$ to the identity map on $\PP^1$.
It follows that $(\mathrm{diag}(a,1),\mathrm{diag}(\bar{a},1))\in\Aut_\R(\QQ_{3,1},p,\bar{p})$ is contained in $\Aut(X/\pi)$ if and only if $a\bar{a}=1$, which implies that $K=\{(\mathrm{diag}(a,1),\mathrm{diag}(\bar{a},1))\in\mathrm{PGL}_2(\C)^2\mid a\bar{a}=1\}$. Conjugating $K$ with the real isomorphism $\mathcal{Q}_{3,1}\longrightarrow(\PP^1\times\PP^1,\sigma_S)$ from Remark~\ref{rmk:real part} yields $K\simeq\mathrm{SO}_2(\R)$. This finishes the proof of (\ref{CBX_2 1}) and yields (\ref{CBX_2 2}), (\ref{CBX_2 4}) and the first half of (\ref{CBX_2 3}).\par
The group $\mathrm{SO}_2(\R)$ acts via $\varepsilon$ on $(\PP^1\times\PP^1,\sigma_C)$ by 
\[([u_0:u_1],[v_0:v_1])\mapsto([u_0:u_1],[v_0:av_1]).\] 
It follows that $(a,\varphi)\in\Aut(X/\pi)\setminus \mathrm{SO}_2(\R)$ is an involution on $(\PP^1\times\PP^1,\sigma_C)$ fixing the irreducible curve $au_0v_0^2P(u_0,u_1)-u_1v_1^2\overline{P}(u_0,u_1)=0$, which is an irreducible double cover of $\PP^1$ ramified at $\Delta$ and $([1:0],[0:1])$, $([0:1],[1:0])$. Its strict transform on $X_{[2]}$ is an irreducible double cover over $\PP^1$ ramified over $\Delta$. This yields the second part of (\ref{CBX_2 3}).
\end{proof}

\begin{Rmk}\label{rmk:link}
Note that the generator of $\Z/2\Z$ in the description of $\Aut(X,\pi)$ in Proposition~\ref{prop:CBX_2} is the composition of elementary links of the real conic bundle $\pi_{[2]}\colon X_{[2]}\rightarrow\PP^1$, each blowing up a pair of non-real points on $s\cup\bar{s}$.
\end{Rmk}
\begin{minipage}{1\textwidth}
\centering
\def\svgwidth{0.65\columnwidth}
\begingroup%
  \makeatletter%
  \providecommand\color[2][]{%
    \errmessage{(Inkscape) Color is used for the text in Inkscape, but the package 'color.sty' is not loaded}%
    \renewcommand\color[2][]{}%
  }%
  \providecommand\transparent[1]{%
    \errmessage{(Inkscape) Transparency is used (non-zero) for the text in Inkscape, but the package 'transparent.sty' is not loaded}%
    \renewcommand\transparent[1]{}%
  }%
  \providecommand\rotatebox[2]{#2}%
  \ifx\svgwidth\undefined%
    \setlength{\unitlength}{460.4227814bp}%
    \ifx\svgscale\undefined%
      \relax%
    \else%
      \setlength{\unitlength}{\unitlength * \real{\svgscale}}%
    \fi%
  \else%
    \setlength{\unitlength}{\svgwidth}%
  \fi%
  \global\let\svgwidth\undefined%
  \global\let\svgscale\undefined%
  \makeatother%
  \begin{picture}(1,0.70952476)%
    \put(0,0){\includegraphics[width=\unitlength,page=1]{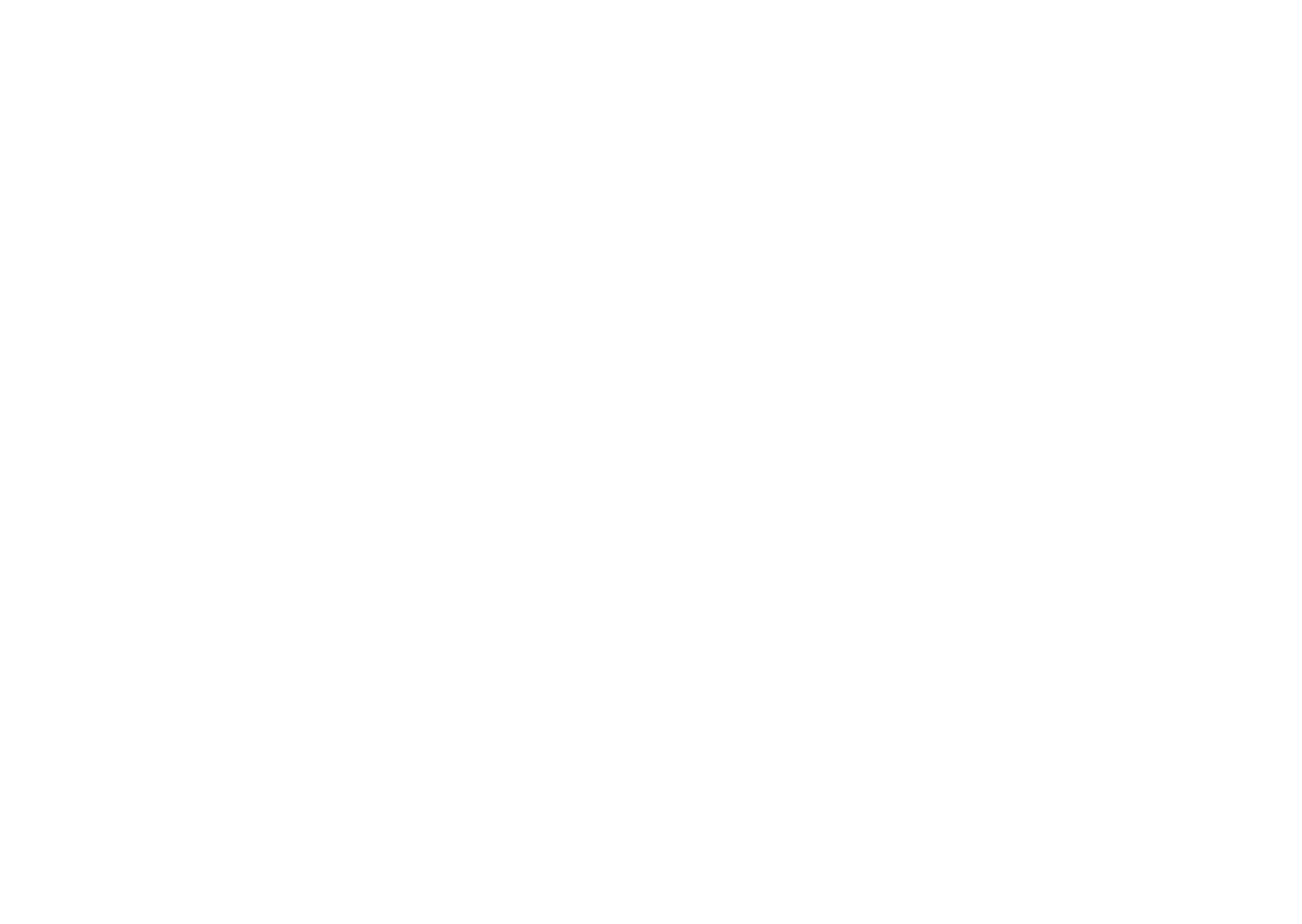}}%
    \put(-0.09124769,0.34174819){\color[rgb]{0,0,0}\makebox(0,0)[lb]{\smash{\SB{$(\PP^1\times\PP^1,\sigma_C)$}}}}%
    \put(-0.14510059,-0.00164326){\color[rgb]{0,0,0}\makebox(0,0)[lb]{\smash{\SB{$([0:1],[1:0])$}}}}%
    \put(0.29901923,0.28814612){\color[rgb]{0,0,0}\makebox(0,0)[lb]{\smash{\SB{$([1:0],[0:1])$}}}}%
    \put(0,0){\includegraphics[width=\unitlength,page=2]{fig07.pdf}}%
    \put(0.21874644,0.00487365){\color[rgb]{0,0,0}\makebox(0,0)[lb]{\smash{\SB{$p_i$}}}}%
    \put(0.0874793,0.22275112){\color[rgb]{0,0,0}\makebox(0,0)[lb]{\smash{\SB{$p_j$}}}}%
    \put(0,0){\includegraphics[width=\unitlength,page=3]{fig07.pdf}}%
    \put(0.47834438,0.6910953){\color[rgb]{0,0,0}\makebox(0,0)[lb]{\smash{\SB{$s_2'$}}}}%
    \put(0.28848432,0.61551195){\color[rgb]{0,0,0}\makebox(0,0)[lb]{\smash{\SB{$F_{[([0:1],[1:0])}$}}}}%
    \put(0.28724077,0.45254793){\color[rgb]{0,0,0}\makebox(0,0)[lb]{\smash{\SB{$E_{([0:1],[1:0])}$}}}}%
    \put(0.48074716,0.34838426){\color[rgb]{0,0,0}\makebox(0,0)[lb]{\smash{\SB{$s_1'$}}}}%
    \put(0.68657192,0.44986066){\color[rgb]{0,0,0}\makebox(0,0)[lb]{\smash{\SB{$F_{([1:0],[0:1])}$}}}}%
    \put(0.67830294,0.6221609){\color[rgb]{0,0,0}\makebox(0,0)[lb]{\smash{\SB{$E_{([1:0],[0:1])}$}}}}%
    \put(0,0){\includegraphics[width=\unitlength,page=4]{fig07.pdf}}%
    \put(0.47481501,0.46735262){\color[rgb]{0,0,0}\makebox(0,0)[lb]{\smash{\SB{$F_{p_j}$}}}}%
    \put(0.56416744,0.58583246){\color[rgb]{0,0,0}\makebox(0,0)[lb]{\smash{\SB{$F_{p_i}$}}}}%
    \put(0.48057245,0.58598083){\color[rgb]{0,0,0}\makebox(0,0)[lb]{\smash{\SB{$E_{p_j}$}}}}%
    \put(0.55840093,0.46721698){\color[rgb]{0,0,0}\makebox(0,0)[lb]{\smash{\SB{$E_{ p_i}$}}}}%
    \put(0,0){\includegraphics[width=\unitlength,page=5]{fig07.pdf}}%
    \put(0.92184121,0.34390844){\color[rgb]{0,0,0}\makebox(0,0)[lb]{\smash{\SB{$(\PP^1\times\PP^1,\sigma_C)$}}}}%
    \put(0.48998025,0.23125813){\color[rgb]{0,0,0}\makebox(0,0)[lb]{\smash{\SB{$\varphi(F_{([0:1],[1:0])})$}}}}%
    \put(0.96016239,-0.01512156){\color[rgb]{0,0,0}\makebox(0,0)[lb]{\smash{\SB{$=([1:0],[0:1])$}}}}%
    \put(0,0){\includegraphics[width=\unitlength,page=6]{fig07.pdf}}%
    \put(0.84926058,0.22064993){\color[rgb]{0,0,0}\makebox(0,0)[lb]{\smash{\SB{$p_i$}}}}%
    \put(0.7226145,0.00728376){\color[rgb]{0,0,0}\makebox(0,0)[lb]{\smash{\SB{$p_j$}}}}%
    \put(0,0){\includegraphics[width=\unitlength,page=7]{fig07.pdf}}%
    \put(0.88365333,-0.02683591){\color[rgb]{0,0,0}\makebox(0,0)[lb]{\smash{\SB{$E_{([1:0],[0:1])}$}}}}%
    \put(0.25701502,-0.03004161){\color[rgb]{0,0,0}\makebox(0,0)[lb]{\smash{\SB{$F_{([1:0],[0:1])}$}}}}%
    \put(-0.02112439,-0.0286813){\color[rgb]{0,0,0}\makebox(0,0)[lb]{\smash{\SB{$F_{[([0:1],[1:0])}$}}}}%
    \put(0.61258252,-0.02596513){\color[rgb]{0,0,0}\makebox(0,0)[lb]{\smash{\SB{$E_{[([0:1],[1:0])}$}}}}%
    \put(0.81845372,-0.0299714){\color[rgb]{0,0,0}\makebox(0,0)[lb]{\smash{\SB{$E_{p_i}$}}}}%
    \put(0.19252828,-0.02736892){\color[rgb]{0,0,0}\makebox(0,0)[lb]{\smash{\SB{$F_{p_i}$}}}}%
    \put(0.10880677,-0.02817353){\color[rgb]{0,0,0}\makebox(0,0)[lb]{\smash{\SB{$F_{p_j}$}}}}%
    \put(0.74691335,-0.02754125){\color[rgb]{0,0,0}\makebox(0,0)[lb]{\smash{\SB{$E_{p_j}$}}}}%
    \put(0,0){\includegraphics[width=\unitlength,page=8]{fig07.pdf}}%
    \put(0.46004507,0.15733771){\color[rgb]{0,0,0}\makebox(0,0)[lb]{\smash{\SB{$\varphi$}}}}%
    \put(0.27138827,0.5166892){\color[rgb]{0,0,0}\makebox(0,0)[lb]{\smash{\SB{$X$}}}}%
    \put(0.34282201,0.367537){\color[rgb]{0,0,0}\makebox(0,0)[lb]{\smash{\SB{$\eta\varepsilon$}}}}%
    \put(0.71527049,0.367537){\color[rgb]{0,0,0}\makebox(0,0)[lb]{\smash{\SB{$\eta\varepsilon$}}}}%
    \put(0.98130528,0.25155224){\color[rgb]{0,0,0}\makebox(0,0)[lb]{\smash{\SB{$\eta\varepsilon(s_1)=\varphi(\eta\varepsilon(s_2))$}}}}%
    \put(0.98343361,0.03553825){\color[rgb]{0,0,0}\makebox(0,0)[lb]{\smash{\SB{$\eta\varepsilon(s_2)=\varphi(\eta\varepsilon(s_1))$}}}}%
    \put(-0.08934546,0.24674777){\color[rgb]{0,0,0}\makebox(0,0)[lb]{\smash{\SB{$\eta\varepsilon(s_2)$}}}}%
    \put(-0.08541116,0.03691765){\color[rgb]{0,0,0}\makebox(0,0)[lb]{\smash{\SB{$\eta\varepsilon(s_1)$}}}}%
    \put(0.49226357,0.20043292){\color[rgb]{0,0,0}\makebox(0,0)[lb]{\smash{\SB{$=([0:1],[1:0])$}}}}%
    \put(0.96383789,0.01096997){\color[rgb]{0,0,0}\makebox(0,0)[lb]{\smash{\SB{$\varphi(F_{([1:0],[0:1])})$}}}}%
    \put(0,0){\includegraphics[width=\unitlength,page=9]{fig07.pdf}}%
    \put(0.78588886,0.52229541){\color[rgb]{0,0,0}\makebox(0,0)[lb]{\smash{\SB{$\varphi$}}}}%
  \end{picture}%
\endgroup%
\\

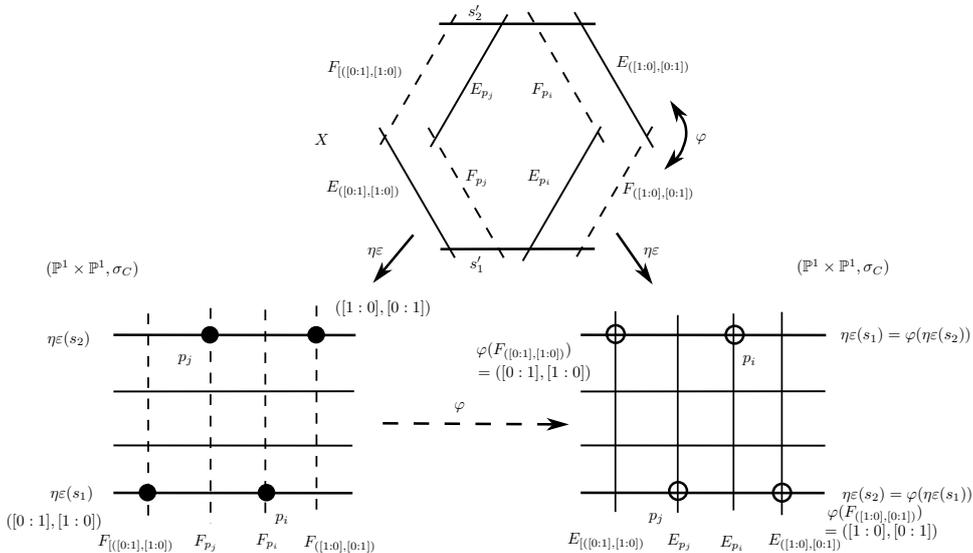
\captionof{figure}{The real birational map $\varphi\colon(\PP^1\times\PP^1,\sigma_C)\rightarrow(\PP^1\times\PP^1,\sigma_C)$ that lifts to an automorphism of $X$ generating $\Z/2\Z\subset\Aut_\R(X/\pi)$}\label{fig:CBX_2 phi}
\end{minipage}

\subsection{Real conic bundles obtained by blowing up a Hirzebruch surface}\label{ssec:CB F}
The $n$-th Hirzebruch surface is given by
\begin{align*}\FF_n&=\{([x_0:x_1:x_2],[u:v])\in\PP^2\times\PP^1\mid u^nx_2=v^nx_1\}\subset\PP^2\times\PP^1\\
\end{align*}
The canonical projection $\mathrm{pr}_n\colon\FF_n\rightarrow\PP^1$ onto the second factor makes it a real conic bundle. In this subsection, we study real conic bundles with a birational morphism $X\rightarrow\FF_n$ of real conic bundles.\par
If $n=0$, then $\FF_0=\PP^1\times\PP^1$ with the standard antiholomorphic involution on it, and $\Aut_\R(\FF_0,\pr_0)=\mathrm{PGL}_2(\R)^2$.\par
If $n=1$, then $\FF_1$ is isomorphic to the blow-up of $[1:0:0]\in\PP^2$, and any automorphism preserves the unique $(-1)$-curve on it, which yields $\Aut_\R(\FF_1)=\Aut_\R(\FF_1,\pr_1)\simeq\Aut_\R(\PP^2,[1:0:0])$.\par 
If $n\geq2$, the automorphism group of $\FF_n$ is
\[\Aut_\R(\FF_n)\simeq\R^{n+1}\rtimes\mathrm{GL}_2(\R)/\mu_n,\]
where $\mu_n=\{\mu\Id\mid \mu^n=1\}$ \cite{B10}. An element
\[\left((a_0,\dots,a_n),\left(\begin{matrix}a&b\\c&d\end{matrix}\right)\right)\in\R^{n+1}\rtimes\mathrm{GL}_2(\R)/\mu_n\]
acts on the chart $u\neq0$ by
\begin{align*}([xu^n:&yu^n:yv^n],[u:v])\mapsto\\
&([xu^n+y(a_0v^n+a_1uv^{n-1}+\cdots+a_nu^n):y(au+bv)^n:y(cu+dv)^n],[au+bv:cu+dv])
\end{align*}
and in particular respects the conic bundle structure on $\FF_n$. Multiples of the identity matrix act trivially on the base, and we get
\[\Aut_\R(\FF_n/\pr_n)\simeq \R^{n+1}\rtimes(\R^*/\mu_n),\] 
where we see $\mu_n\subset\R^*$ as $\mu_n=\{\pm1\}$ if $n$ is even and $\mu_n=\{1\}$ if $n$ is odd. \par
We denote by $E_n:=\{([1:0:0],[u:v])\mid [u:v]\in\PP^1\}\subset\FF_n$ its $(-n)$-section, by $f$ the general fibre of $\pr_n$ and by $s_n\subset\FF_n$ the section given by $x_0=0$, i.e. 
\[s_n:=\{([0:u^n:v^n],[u:v])\mid[u:v]\in\PP^1\}.\]
The conditions $s_nE_n=0$ and $s_nf=1$ yield $s_n\sim E_n+nf$ as divisors and hence $s_n^2=n$.\par
Let us give a necessary description of minimal pairs $(X,\Aut_\R(X,\pi))$ equipped with a birational morphism $\eta\colon X\rightarrow\FF_n$ of real conic bundles. \par

\begin{Lem}\label{lem:CBF}
Let $\pi\colon X\rightarrow\PP^1$ be a relatively $\Aut_\R(X,\pi))$-minimal real conic bundle equipped with a brational morphism $\eta\colon X\rightarrow\FF_n$ of real conic bundles that is not an isomorphism. \par
If $\Aut_\R(X/\pi)\cap\ker\left(\ \Aut_\R(X,\pi)\rightarrow\Aut(\Pic(X))\ \right)$ contains a non-trivial element, then there exists a birational morphism $X\rightarrow\FF_N$ of real conic bundles blowing up $2N\geq2$ points, all contained in $s_N$ with pairwise distinct fibres.\par
Else, $\Aut_\R(X/\pi)\simeq(\Z/2\Z)^r$ for $r\in\{0,1,2\}$.
\end{Lem}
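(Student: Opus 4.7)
The plan is to apply Lemma~\ref{lem:blowup CB} to obtain a (perhaps non-real) birational morphism $\eta'\colon X\to\FF_N$ of conic bundles blowing up $2N$ points on a section $s$ of self-intersection $N$ disjoint from $E_N$, and then to upgrade it to a real morphism by exploiting the hypothesis that a real conic bundle morphism $\eta\colon X\to\FF_n$ already exists. When $G$ is trivial, the second assertion is exactly the last statement of Lemma~\ref{lem:blowup CB}, so I will focus on the case $G\neq 1$.

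The heart of the argument will be to prove that the two $(-N)$-sections $\widetilde s$ and $\widetilde{E_N}$ on $X_\C$ are individually real. The set $\{\widetilde s,\widetilde{E_N}\}$ is the fixed locus of the real group $G$, so it is preserved by $\sigma$: the two sections are either both real, or they form a pair of non-real conjugate curves. To rule out the second possibility, I will use that the existence of a real conic bundle morphism $\eta\colon X\to\FF_n$ forces every real singular fibre of $X$ to have two $\sigma$-fixed (real) components; otherwise a single component could not be contracted by a real morphism of conic bundles. Since the two $(-N)$-sections meet different components of each singular fibre by the structure of $\eta'$, applying $\sigma$ in any real singular fibre yields a contradiction: if $\widetilde s$ meets a real component $A_1$, then $\widetilde{E_N}=\sigma(\widetilde s)$ would also meet $\sigma(A_1)=A_1$, contradicting that it should meet the other component. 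A separate Picard-rank comparison will handle the \emph{a priori} remaining case where $X$ has no real singular fibres at all, by matching the Hirzebruch indices arising from the real $\eta$ and the complex $\eta'$ and identifying $\widetilde{E_n}$ with one of the two $(-N)$-sections.

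Once both $(-N)$-sections are known to be real, I will construct the desired real morphism by contracting, in each real singular fibre, the real component meeting $\widetilde s$, and in each pair of non-real conjugate singular fibres a $\sigma$-invariant conjugate pair of such components. The target will be a smooth real ruled surface isomorphic over $\C$ to $\FF_N$, and since $N\geq 1$ the presence of the real $(-N)$-section pins down its real form as the standard $\FF_N$. The $2N$ blown-up points lie on the image of $\widetilde s$, a real section of self-intersection $N$ disjoint from $E_N$; using the transitivity of the translation subgroup $\R^{N+1}\subset\Aut_\R(\FF_N)$ on such sections, I will move this section to $s_N$. The blown-up points lie in pairwise distinct fibres because each singular fibre of $X$ has exactly two components by the relative $\Aut_\R(X,\pi)$-minimality.

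The main obstacle will be the reality of the two $(-N)$-sections: without the hypothesis of a real Hirzebruch morphism $\eta$ the swap case can genuinely occur, as illustrated by the surface $X_{[2]}$ treated in Section~\ref{ssec:CBDP}, so some care will be needed to handle all configurations of real and non-real singular fibres.
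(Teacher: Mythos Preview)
Your route differs from the paper's. Instead of splitting on whether $X$ has a real singular fibre, the paper pushes both $(-N)$-sections $c_1,c_2$ down via the given real $\eta\colon X\to\FF_n$: if $r$ of the $2N$ contracted components meet $c_1$, then $\{\eta(c_1)^2,\eta(c_2)^2\}=\{-N+r,\,N-r\}$, so one of these is $\le 0$ and hence (for $n\ge 1$) equals the exceptional section $E_n$, which is real. That $c_i$ is therefore real, and the paper then invokes the real swap automorphism $h\in\Aut_\R(X,\pi)$ supplied by Lemma~\ref{lem:blowup CB} to conclude that the other section $h(c_i)$ is real as well. This avoids any case analysis and makes essential use of the swap element, which your argument does not.

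Your treatment when a real singular fibre is present is correct and clean. The gap is the ``separate Picard-rank comparison'' in the all-non-real-fibre case: you do not carry it out, and your hint of identifying $\tilde E_n$ with one of the two $(-N)$-sections is already the paper's self-intersection idea---which, however, needs $n\ge 1$ to single out $E_n$. This case is precisely where the conjugate-pair scenario survives: for $n=0$ and $N=1$, the surface $X_{[3,\FF_0]}$ with the conic bundle coming from one projection of $\FF_0$ has only non-real singular fibres, the two $(-1)$-sections $g_p,\overline{g_p}$ are a conjugate pair, $G\supset\mathrm{SO}_2(\R)$, the pair is relatively $\Aut_\R(X,\pi)$-minimal, and yet no real conic-bundle morphism to $\FF_1$ exists (neither $\{E_p,\overline{f_p}\}$ nor $\{f_p,E_{\bar p}\}$ is $\sigma$-invariant). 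So you cannot expect to prove reality of the sections here from your fibre argument alone; you should either restrict to $n\ge 1$, as the paper's self-intersection step effectively requires, or handle $n=0$ separately via the del Pezzo classification as in Proposition~\ref{prop:among maximal}.
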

\begin{proof}
If $G:=\Aut_\R(X/\pi)\cap\ker\left(\ \Aut_\R(X,\pi)\rightarrow\Aut(\Pic(X))\ \right)$ is trivial, the claim follows from Lemma~\ref{lem:blowup CB}. Suppose that $G$ is non-trivial. By Lemma~\ref{lem:blowup CB}, there exists a (perhaps non-real) birational morphism $X\rightarrow\FF_N$, $N\geq1$, blowing up $2N$ points on a section $s$ of $\FF_N$ disjoint from $E_N$ and of self-intersection $N$. By $c_1$ and $c_2$ we denote the strict transforms of $s$ and $E_N$ in $X$. 
The real birational morphism $\eta$ blows down one component of each fibre. Furthermore, $c_1$ is sent onto a curve of self-intersection $N-r$, where $r$ is the number of contracted components intersecting $\eta(c_1)$, and $\eta(c_2)$ is a curve of self-intersection $N-(2N-r)=-N+r$. We can assume that $r\leq N$ (else we exchange the indices of $c_1$ and $c_2$). Then $\eta(c_2)^2\leq0$, hence $\eta(c_2)$ is the exceptional section of $\FF_n$. In particular, it is a real curve, hence also $c_2$ is a real curve. By Lemma~\ref{lem:blowup CB} there is an element of $\Aut_\R(X,\pi)$ exchanging $c_1$ and $c_2$, thus $c_1$ is a real curve as well. Therefore, only contracting components intersecting $c_1$ commutes with the antiholomorphic involution of $X$. It follows that the birational morphism $X\rightarrow\FF_N$ from Lemma~\ref{lem:blowup CB} is in fact a real morphism.
\end{proof}

In the first assertion of the lemma, the cases $N=0$ and $N=1$ yield relatively $\Aut_\R(X,\pi))$-minimal conic bundles, but the groups $\Aut_\R(X,\pi)$ are not maximal algebraic subgroups of $\Bir_{\R}(\PP^2)$. 

\begin{Prop}\label{prop:CBF}
Let $\eta\colon X\rightarrow\FF_n$ be the blow-up of $2n\geq 4$ points contained in $s_n$. Then $\pi:=\pr_n\eta\colon X\rightarrow\PP^1$ is a is relatively $\Aut_\R(X,\pi))$-minimal conic bundle. \par
Let $\Delta\subset\PP^1$ be the projection of the points blown up by $\eta$, $H_{\Delta}\subset\mathrm{PGL}_2(\R)$ the subgroup preserving $\Delta$ and $\mu_n=\{\pm1\}$ if $n$ is even and $\mu_n=\{1\}$ if $n$ is odd. Then: 
\begin{enumerate}
\item\label{CBF 1} There is a split exact sequence
\[1\rightarrow\Aut_\R(X/\pi)\rightarrow\Aut_\R(X,\pi)\rightarrow H_{\Delta}\rightarrow 1\]
where $\Aut_\R(X/\pi)\simeq (\R^*/\mu_n)\rtimes\Z/2\Z$.
\item\label{CBF 2} An element of $(\R^*/\mu_n)\subset\Aut(X/\pi)$ fixes the two $(-n)$-sections of $X$ and the generator of $\Z/2\Z$ exchanges them.
\item\label{CBF 3} An element of $\Aut(X/\pi)\setminus(\R^*/\mu_n)$ is an involution fixing an irreducible curve on $\FF_n$ which is a double cover of $\PP^1$ ramified at $\Delta$.
\item\label{CBF 4} The group $\R^*/\mu_n$ acts trivially on $\Pic(X)$.
\end{enumerate}
\end{Prop}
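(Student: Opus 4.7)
The proof would follow the same strategy as Proposition~\ref{prop:CBX_2}. From the exact sequence (\ref{stern}) we obtain the homomorphism $\alpha\colon \Aut_\R(X,\pi)\to\mathrm{PGL}_2(\R)$. Since automorphisms of $X$ permute the $2n$ singular fibres of $\pi$, which project exactly to $\Delta$, the image of $\alpha$ lies in $H_{\Delta}$. For surjectivity with a splitting, given $\bar\alpha\in H_{\Delta}$ I would lift it via the presentation $\Aut_\R(\FF_n)\simeq\R^{n+1}\rtimes\mathrm{GL}_2(\R)/\mu_n$ to an automorphism $\alpha$ of $\FF_n$ preserving the section $s_n$; since $\bar\alpha$ preserves $\Delta$ and the $2n$ blown-up points sit on $s_n$ in bijection with $\Delta$ via $\pr_n|_{s_n}$, such $\alpha$ automatically permutes the blow-up data and lifts to $X$.

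Computing $\Aut_\R(X/\pi)$ is the heart of the proof. Over $\C$, $X$ has exactly two $(-n)$-sections, $\tilde s_n$ and $\tilde E_n$, so any element of $\Aut_\R(X/\pi)$ either preserves or exchanges them. In the affine chart $(v,x)$ on $\FF_n$ (with $s_n=\{x=0\}$ and $E_n=\{x=\infty\}$), elements preserving both sections are of the form $x\mapsto\lambda x$ and yield precisely the subgroup $\R^*/\mu_n\subset\Aut_\R(\FF_n/\pr_n)$ — these scalings fix the $p_i\in s_n$ pointwise and lift to $X$. The swap case is realised by the explicit involution $\tau_P(v,x)=(v,P(v)/x)$, where $P(v)=\prod_{v_i\in\Delta}(v-v_i)$.

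The main obstacle is verifying that $\tau_P$ extends to a regular automorphism of $X$. As a birational self-map of $\FF_n$, $\tau_P$ has indeterminacy at each $p_i\in s_n$ and at the corresponding $p_i'\in E_n$, and since only the $p_i$ are blown up, one might worry that $\tau_P$ has persistent indeterminacy at the $p_i'\in X$. The key calculation is a local one: in coordinates $(u,y)$ with $u=v-v_i$ and $y=1/x$ near $p_i'$, the map $\tau_P$ reads $(u,y)\mapsto(u,ug(u)y)$ in the target $(u,x')$-chart with $g(0)\neq0$, which is regular at $(0,0)$ and sends $p_i'$ to the point $\tilde s_n\cap E_i^s$ of the target blow-up. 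So $\tau_P$ extends to $X$ and swaps the two components of every singular fibre, and therefore swaps $\tilde s_n\leftrightarrow\tilde E_n$. This same fact forces relative $\Aut_\R(X,\pi)$-minimality, since no single component of a singular fibre can be contracted equivariantly.

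The remaining assertions drop out. A direct computation gives $\tau_P\lambda\tau_P^{-1}=\lambda^{-1}$ for $\lambda\in\R^*/\mu_n$, which with the above identifies $\Aut_\R(X/\pi)\simeq(\R^*/\mu_n)\rtimes\Z/2\Z$, yielding (\ref{CBF 1}) and (\ref{CBF 2}). For (\ref{CBF 3}), every element of $\Aut_\R(X/\pi)\setminus(\R^*/\mu_n)$ has the form $\lambda\tau_P$, is an involution, and its fixed locus on $\FF_n$ is cut out by $x^2=\lambda P(v)$, which is an irreducible double cover of $\PP^1$ ramified exactly at $\Delta$ because $P$ has simple roots at the $v_i$. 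Finally (\ref{CBF 4}) follows because every $\lambda\in\R^*/\mu_n$ preserves each fibre, each of the two sections $\tilde s_n,\tilde E_n$, and each exceptional divisor $E_i^s$, so the induced action on $\Pic(X)$ (generated by $K_X$, the class of a fibre, and the $[E_i^s]$) is trivial.
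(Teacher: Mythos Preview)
Your proof follows essentially the same route as the paper's: construct the explicit involution $\tau_P$ (the paper writes it in homogeneous coordinates on $\FF_n$), check it lifts to $X$ and swaps $\tilde s_n,\tilde E_n$, identify the subgroup fixing both sections as $\R^*/\mu_n\subset\Aut_\R(\FF_n/\pr_n)$, and read off (\ref{CBF 2})--(\ref{CBF 4}).

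One small slip worth correcting: on $\FF_n$ itself, $\tau_P$ is \emph{not} indeterminate at the points $p_i'\in E_n$. In your own coordinates $(u,y)$ near $p_i'$ the map reads $(u,y)\mapsto(u,P(v)y)=(u,ug(u)y)$ in the target $(u,x')$-chart, which is already regular and sends $p_i'\mapsto p_i$; the paper accordingly states that the base points of $\varphi$ are exactly $p_1,\dots,p_{2n}\in s_n$. The genuine issue---which your local computation does correctly resolve---is that $p_i$ has been blown up on $X$, so one must verify that the composite $\eta^{-1}\circ\tau_P$ is regular at $p_i'$ (it is, landing at $\tilde s_n\cap E_{p_i}$ as you say). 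The paper handles this more tersely by noting that $\varphi$ contracts the fibre through $p_i$ onto $p_i$, so blowing up the $p_i$ simultaneously resolves the base points and makes the lift biregular.
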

\begin{proof}
Denote by $\tilde{s}_n,\tilde{E}_n\subset X$ the strict transforms of $s_n$ and $E_n$, respectively. Then $\pi:=\pr\eta\colon X\rightarrow\PP^1$ is a conic bundle with $2n$ singular fibres, whose components either intersect $\tilde{s}_n$ or $\tilde{E}_n$. The action of $\Aut_\R(X,\pi)$ descends to an action on $\PP^1$ that preserves the set $\Delta$. On the other hand, any element of $H_{\Delta}$ lifts to an automorphism of $\FF_n$ preserving the set of the points blown-up by $\eta$ and hence lifts to an automorphism of $X$ that permutes the singular fibres. Thus the sequence splits.\par
The group $\Aut_\R(X/\pi)$ acts on the set $\{\tilde{E}_n,\tilde{s}_n\}$ non-trivially: We can assume that the points blown up by $\eta$ are in the chart $u=1$. They are thus of the form $p_i=([0:1:v_i^n],[1:v_i])\in s_n$, $i=1,\dots,2n$. The number of non-real points in $\Delta$ is even, hence the number of real points in $\Delta$ is even as well. We order the points such that $p_1,\dots,p_{2k}$ are real points and $p_{2k+1},p_{2k+2}:=\overline{p_{2k+1}},\dots,p_{2n-1},p_{2n}:=\overline{p_{2n-1}}$ are pairs of non-real conjugate points. For $i=1,\dots,2k$, let $l_i:=t-v_i^n\in\R[t]$. For $i=1,\dots,n-k$, let $m_i:=(t-v_{2(k+i)-1}^n)(t-v_{2(k+i)}^n)\in\R[t]$. We define 
\[P(r):=\prod_{i=1}^{k}l_i\prod_{i=1}^{n-k}m_i\quad\in\ \R[t].\] 
Then the rational map $\varphi\colon\FF_n\dashmapsto\FF_n$ given on the chart $u=1$ by
\[\varphi\colon([x_0:x_1:x_1v^n],[1:v])\dashmapsto([x_1P(v):x_0:x_0v^n],[1:v])\]
is a real involution respecting the conic bundle structure of $\FF_n$. It is undefined exactly at the points $p_1,\dots,p_{2n}$ and exchanges $E_n$ and $s_n$. It furthermore contracts the fibre through $p_i$ onto $p_i$. The map $\varphi$ is visualised in Figure~\ref{fig:CBF phi}.\par
Therefore, $\varphi$ lifts to an automorphism of the conic bundle $X$ that exchanges $\tilde{s}_n$ and $\tilde{E}_n$. This induces the split exact sequence
\[1\rightarrow K\rightarrow\Aut_\R(X/\pi)\rightarrow\Z/2\Z\rightarrow1.\]
It also proves that we cannot contract any components of the singular fibres on $X$ $\Aut_\R(X/\pi)$-equivariantly and thus also not $\Aut_\R(X,\pi)$-equivariantly. In particular, $\pi\colon X\rightarrow\PP^1$ is relatively $\Aut_\R(X,\pi))$-minimal.\par
By definition of $K$, all of its elements fix $\tilde{s}_n$ and $\tilde{E}_n$ pointwise and thus descend to elements of $\Aut_\R(\FF_n/\pr)$. On the other hand, an element $((a_0,\dots,a_n),r)\in\Aut_\R(\FF_n/\pr_n)\simeq\R^{n+1}\rtimes(\R^*\Id)/\mu_n$ acts on the chart $u\neq0$ by
\[([x_0:x_1:v^nx_1],[1:v])\longmapsto([x_0+x_1(a_nv^n+a_{n-1}v^{n-1}+\cdots a_0):r^nx_1:r^nv^nx_1],[1:v])\]
and lifts to an automorphism of $X$ if and only if it preserves $s_n$ (which is given by $x_0=0$). It follows that $K=\R^*/\mu_n$. This completes (\ref{CBF 1}) and (\ref{CBF 2}).\par
Every element $(r,\varphi)\in\Aut_\R(X/\pi)\setminus(\R^*/\mu_n)$ fixes the curve $x_1^2P(v)-r^nx_0^2=0$ (given on the chart $u\neq0$), which is a double cover of $\PP^1$ ramified over $\Delta$. This is (\ref{CBF 3}).\par
Finally, the action of $\R^*/\mu_n\subset\Aut_\R(X/\pi)$ fixes each fibre and it fixes $E_n$ and $s_n$. Hence it acts trivially on $\Pic(X)$, which is (\ref{CBF 4}).
\end{proof}
\begin{minipage}{1\textwidth}
\centering
\def\svgwidth{0.54\columnwidth}
\begingroup%
  \makeatletter%
  \providecommand\color[2][]{%
    \errmessage{(Inkscape) Color is used for the text in Inkscape, but the package 'color.sty' is not loaded}%
    \renewcommand\color[2][]{}%
  }%
  \providecommand\transparent[1]{%
    \errmessage{(Inkscape) Transparency is used (non-zero) for the text in Inkscape, but the package 'transparent.sty' is not loaded}%
    \renewcommand\transparent[1]{}%
  }%
  \providecommand\rotatebox[2]{#2}%
  \ifx\svgwidth\undefined%
    \setlength{\unitlength}{439.70408042bp}%
    \ifx\svgscale\undefined%
      \relax%
    \else%
      \setlength{\unitlength}{\unitlength * \real{\svgscale}}%
    \fi%
  \else%
    \setlength{\unitlength}{\svgwidth}%
  \fi%
  \global\let\svgwidth\undefined%
  \global\let\svgscale\undefined%
  \makeatother%
  \begin{picture}(1,0.7620834)%
    \put(0,0){\includegraphics[width=\unitlength,page=1]{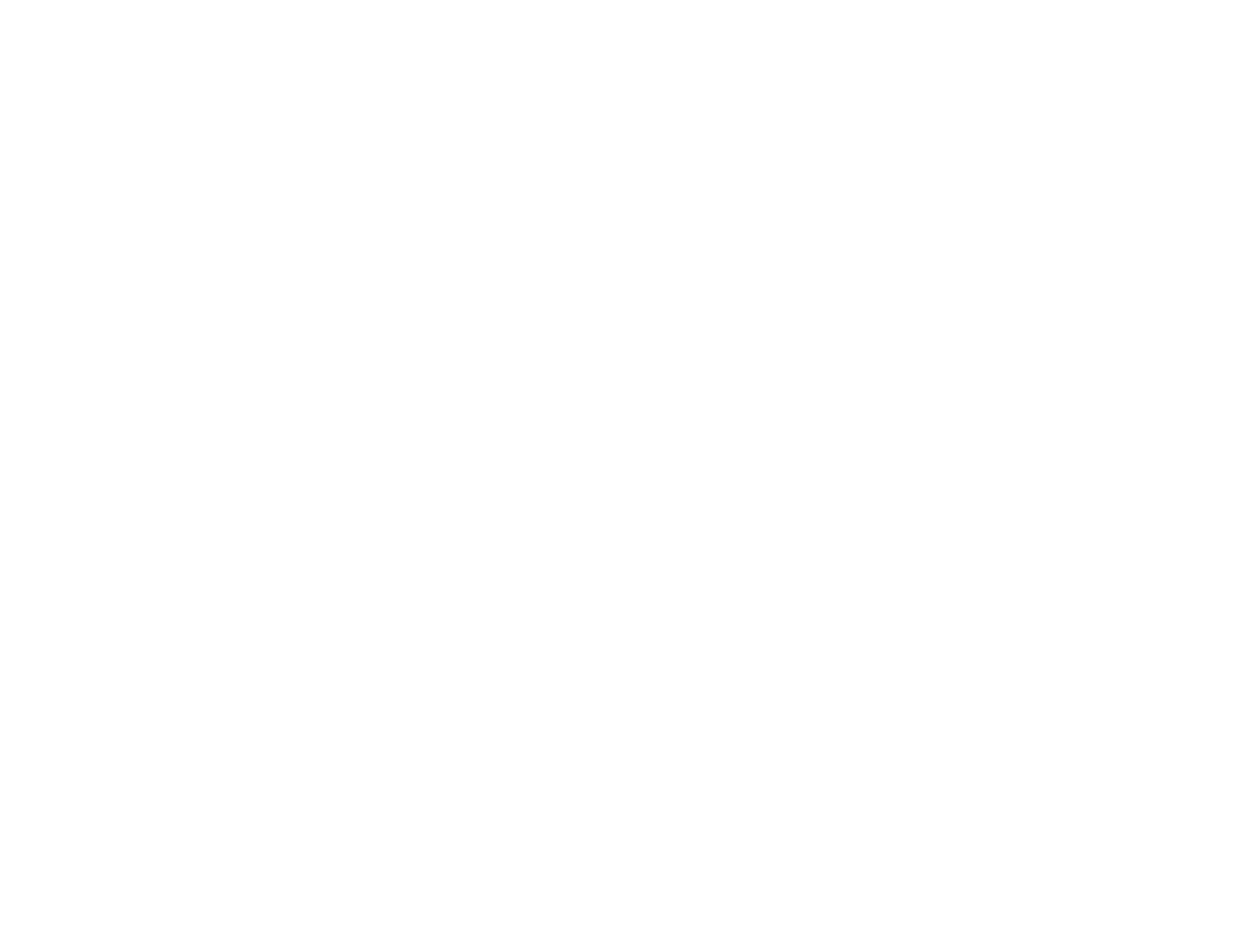}}%
    \put(-0.05186703,0.38962282){\color[rgb]{0,0,0}\makebox(0,0)[lb]{\smash{\SB{$\FF_n$}}}}%
    \put(0,0){\includegraphics[width=\unitlength,page=2]{fig08.pdf}}%
    \put(0.32004816,0.67628623){\color[rgb]{0,0,0}\makebox(0,0)[lb]{\smash{\SB{$E_{p_1}$}}}}%
    \put(0.32320564,0.5056434){\color[rgb]{0,0,0}\makebox(0,0)[lb]{\smash{\SB{$f_{p_1}$}}}}%
    \put(0.66432306,0.5028295){\color[rgb]{0,0,0}\makebox(0,0)[lb]{\smash{\SB{$f_{p_4}$}}}}%
    \put(0.65566445,0.68324847){\color[rgb]{0,0,0}\makebox(0,0)[lb]{\smash{\SB{$E_{p_4}$}}}}%
    \put(0,0){\includegraphics[width=\unitlength,page=3]{fig08.pdf}}%
    \put(0.44258824,0.52114568){\color[rgb]{0,0,0}\makebox(0,0)[lb]{\smash{\SB{$f_{p_2}$}}}}%
    \put(0.53615092,0.64520825){\color[rgb]{0,0,0}\makebox(0,0)[lb]{\smash{\SB{$E_{p_3}$}}}}%
    \put(0.44861697,0.64536361){\color[rgb]{0,0,0}\makebox(0,0)[lb]{\smash{\SB{$E_{p_2}$}}}}%
    \put(0.54574822,0.52100365){\color[rgb]{0,0,0}\makebox(0,0)[lb]{\smash{\SB{$f_{p_3}$}}}}%
    \put(0,0){\includegraphics[width=\unitlength,page=4]{fig08.pdf}}%
    \put(0.90956135,0.38518396){\color[rgb]{0,0,0}\makebox(0,0)[lb]{\smash{\SB{$\FF_n$}}}}%
    \put(0,0){\includegraphics[width=\unitlength,page=5]{fig08.pdf}}%
    \put(0.48283637,0.19652298){\color[rgb]{0,0,0}\makebox(0,0)[lb]{\smash{\SB{$\varphi$}}}}%
    \put(0.25066479,0.57280699){\color[rgb]{0,0,0}\makebox(0,0)[lb]{\smash{\SB{$X$}}}}%
    \put(0.30103297,0.41328397){\color[rgb]{0,0,0}\makebox(0,0)[lb]{\smash{\SB{$\eta$}}}}%
    \put(0.79065325,0.40548398){\color[rgb]{0,0,0}\makebox(0,0)[lb]{\smash{\SB{$\eta$}}}}%
    \put(0.27359763,0.00366228){\color[rgb]{0,0,0}\makebox(0,0)[lb]{\smash{\SB{$f_{p_4}$}}}}%
    \put(0.03259915,0.0095387){\color[rgb]{0,0,0}\makebox(0,0)[lb]{\smash{\SB{$f_{p_1}$}}}}%
    \put(0.20159997,0.00698805){\color[rgb]{0,0,0}\makebox(0,0)[lb]{\smash{\SB{$f_{p_3}$}}}}%
    \put(0.12171831,0.00696873){\color[rgb]{0,0,0}\makebox(0,0)[lb]{\smash{\SB{$f_{p_2}$}}}}%
    \put(0.33736827,0.06910141){\color[rgb]{0,0,0}\makebox(0,0)[lb]{\smash{\SB{$E_n$ $[-n]$}}}}%
    \put(0.24833323,0.26425225){\color[rgb]{0,0,0}\makebox(0,0)[lb]{\smash{\SB{$p_4$}}}}%
    \put(0,0){\includegraphics[width=\unitlength,page=6]{fig08.pdf}}%
    \put(0.16996293,0.26437379){\color[rgb]{0,0,0}\makebox(0,0)[lb]{\smash{\SB{$p_3$}}}}%
    \put(0.09160118,0.26501864){\color[rgb]{0,0,0}\makebox(0,0)[lb]{\smash{\SB{$p_2$}}}}%
    \put(0,0){\includegraphics[width=\unitlength,page=7]{fig08.pdf}}%
    \put(0.00739704,0.26649591){\color[rgb]{0,0,0}\makebox(0,0)[lb]{\smash{\SB{$p_1$}}}}%
    \put(0.34628507,0.29580556){\color[rgb]{0,0,0}\makebox(0,0)[lb]{\smash{\SB{$s_n$ $[n]$}}}}%
    \put(0.48801985,0.06910141){\color[rgb]{0,0,0}\makebox(0,0)[lb]{\smash{\SB{$\varphi(E_n)=s_n$ $[n]$}}}}%
    \put(0,0){\includegraphics[width=\unitlength,page=8]{fig08.pdf}}%
    \put(0.81937985,0.10682665){\color[rgb]{0,0,0}\makebox(0,0)[lb]{\smash{\SB{$\varphi(f_{p_2})$}}}}%
    \put(0,0){\includegraphics[width=\unitlength,page=9]{fig08.pdf}}%
    \put(0.95035217,0.01369089){\color[rgb]{0,0,0}\makebox(0,0)[lb]{\smash{\SB{$E_{p_4}$}}}}%
    \put(0.70489656,0.01399582){\color[rgb]{0,0,0}\makebox(0,0)[lb]{\smash{\SB{$E_{p_1}$}}}}%
    \put(0.876126,0.01255942){\color[rgb]{0,0,0}\makebox(0,0)[lb]{\smash{\SB{$E_{p_3}$}}}}%
    \put(0.78064441,0.01142585){\color[rgb]{0,0,0}\makebox(0,0)[lb]{\smash{\SB{$E_{p_2}$}}}}%
    \put(0.73517582,0.10908702){\color[rgb]{0,0,0}\makebox(0,0)[lb]{\smash{\SB{$\varphi(f_{p_1})$}}}}%
    \put(0.48764795,0.29358205){\color[rgb]{0,0,0}\makebox(0,0)[lb]{\smash{\SB{$\varphi(s_n)=E_n$ $[-n]$}}}}%
    \put(0.63724919,0.73853118){\color[rgb]{0,0,0}\makebox(0,0)[lb]{\smash{\SB{$s_n$ $[-n]$}}}}%
    \put(0.63377462,0.41804542){\color[rgb]{0,0,0}\makebox(0,0)[lb]{\smash{\SB{$E_n$ $[-n]$}}}}%
    \put(0,0){\includegraphics[width=\unitlength,page=10]{fig08.pdf}}%
    \put(0.76781529,0.57555414){\color[rgb]{0,0,0}\makebox(0,0)[lb]{\smash{\SB{$\varphi$}}}}%
    \put(0.89650025,0.10517137){\color[rgb]{0,0,0}\makebox(0,0)[lb]{\smash{\SB{$\varphi(f_{p_3})$}}}}%
    \put(0.98577705,0.1043882){\color[rgb]{0,0,0}\makebox(0,0)[lb]{\smash{\SB{$\varphi(f_{p_4})$}}}}%
  \end{picture}%
\endgroup%
\\
\captionof{figure}{The real birational map $\varphi\colon\FF_n\dashrightarrow\FF_n$ that lifts to an automorphism of $X$.}\label{fig:CBF phi}
\end{minipage}

The following lemma is an adapted version of Lemma~\cite[Lemma 5.2.1]{B10}.

\begin{Lem}\label{lem:finite}
Let $\pi\colon X\rightarrow\PP^1$ be a relatively $\Aut_\R(X,\pi))$-minimal conic bundle with a birational morphism $\eta\colon X\rightarrow\FF_n$ of real conic bundles. Suppose that $\Aut_\R(X/\pi)\cap\ker(\Aut_\R(X,\pi)\rightarrow\Aut_\R(\Pic(X)))=\{1\}$. Then $\Aut_\R(X,\pi)$ is finite or strictly contained in the automorphism group of a real del Pezzo surface.
\end{Lem}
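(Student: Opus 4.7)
The plan is to analyse the image $H := \alpha(\Aut_\R(X,\pi)) \subset \mathrm{PGL}_2(\R)$ coming from the exact sequence \eqref{stern}. By the hypothesis and the second assertion of Lemma~\ref{lem:blowup CB}, $\Aut_\R(X/\pi)$ is isomorphic to $(\Z/2\Z)^r$ for some $r \in \{0,1,2\}$, hence is finite; consequently $\Aut_\R(X,\pi)$ is finite if and only if $H$ is finite. I therefore reduce to showing that either $H$ is finite, or $\Aut_\R(X,\pi)$ embeds properly into the automorphism group of some real del Pezzo surface.

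First I would observe that $H$ preserves the finite set $\Delta \subset \PP^1(\C)$ of images of the singular fibres of $\pi$. Since $\pi$ is relatively minimal and $\eta\colon X \to \FF_n$ is a morphism of conic bundles, each singular fibre of $\pi$ corresponds to a single blown-up point of $\eta$, with no two such points sharing a fibre of $\pi_n$. If $|\Delta| \geq 3$, the setwise stabiliser of $\Delta$ in $\mathrm{PGL}_2(\C)$ is finite, so $H$ is finite and $\Aut_\R(X,\pi)$ is finite.

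Next I would handle $|\Delta| \leq 2$. The case $|\Delta|=0$ forces $\eta$ to be an isomorphism and $X \simeq \FF_n$, so $\Aut_\R(X/\pi) = \Aut_\R(\FF_n/\pi_n) \simeq \R^{n+1} \rtimes (\R^*/\mu_n)$ has a positive-dimensional connected component that acts trivially on $\Pic(\FF_n)$, contradicting the hypothesis. For $|\Delta|\in\{1,2\}$, the morphism $\eta$ blows up at most two real points or one pair of non-real conjugate points on $\FF_n$, always on distinct fibres of $\pi_n$. An intersection analysis using the strict transform $\tilde E_n$ of $E_n$ shows that for $n \geq 2$ the two components of any singular fibre of $\pi$ meet $\tilde E_n$ with distinct multiplicities; since $\tilde E_n$ is the unique $(-n)$-curve on $X$ and must be preserved by $\Aut_\R(X,\pi)$, no element of $\Aut_\R(X,\pi)$ can swap those components, contradicting relative $\Aut_\R(X,\pi)$-minimality. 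Thus $n \in \{0,1\}$, and the $(-1)$-curve configuration on $X$ places it in the classification of real del Pezzo surfaces of degree $\geq 6$ from Section~\ref{sec:DP}.

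In this remaining case, $\Aut_\R(X,\pi) \subsetneq \Aut_\R(X)$ properly: the full automorphism group of $X$ contains automorphisms permuting the distinct conic bundle structures on $X$ (for instance the dihedral $D_6$ acting on the hexagon of $(-1)$-curves on a del Pezzo surface of degree $6$, the involution exchanging the two rulings on $\PP^1\times\PP^1$, or the analogous involution on a del Pezzo of degree $7$), and such automorphisms cannot preserve $\pi$. The hard part will be the intersection-theoretic case analysis of the previous paragraph: for each $n$ and each configuration of blown-up points compatible with $|\Delta|\leq 2$, one must check carefully that relative minimality either excludes the case or forces $X$ to be a del Pezzo surface of one of the types classified in Section~\ref{sec:DP}, so that the two alternatives of the statement indeed exhaust all possibilities.
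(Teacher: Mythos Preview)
Your overall strategy matches the paper's: split according to the number of singular fibres, use Lemma~\ref{lem:blowup CB} to see that $\Aut_\R(X/\pi)\simeq(\Z/2\Z)^r$ is finite under the hypothesis, and conclude finiteness of $\Aut_\R(X,\pi)$ once $|\Delta|\ge3$. The difference lies in the treatment of $|\Delta|\le2$. You eliminate $n\ge2$ by observing that $\tilde E_n$ is $\Aut_\R(X,\pi)$-invariant and distinguishes the two components of each singular fibre, and then propose a case analysis for $n\in\{0,1\}$. The paper instead argues uniformly for all $n$: relative minimality supplies some $g\in\Aut_\R(X,\pi)$ exchanging the components of the singular fibres, so $s:=g(\tilde E_n)\neq\tilde E_n$ is a second real section with $s^2=\tilde E_n^2=:-r$; contracting in each singular fibre the component meeting $s$ gives a morphism $X\to\FF_r$ under which $s$ acquires self-intersection $-r+m$ (with $m=|\Delta|\le2$), and since any section of $\FF_r$ other than $E_r$ has self-intersection $\ge r$, one obtains $m\ge 2r$, hence $m=2$ and $r\le1$. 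This single inequality replaces both your elimination of $n\ge2$ and the residual case-check for $n\le1$, landing directly on a del Pezzo surface of degree~$6$.

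Two small corrections to your write-up. First, ``the unique $(-n)$-curve'' is not quite right: if a blown-up point lies on $E_n$ then $\tilde E_n^2<-n$; what you actually need (and what holds for $n\ge2$ and $|\Delta|\le2$) is that $\tilde E_n$ is the unique irreducible curve of self-intersection $\le-2$. Second, the case $|\Delta|=1$ never survives relative minimality, so no del Pezzo of degree~$7$ appears: on such a surface the three $(-1)$-curves form a chain, the middle one is $\Aut_\R(X)$-invariant, and in either realisation ($n=0$ or $n=1$) it is one of the two components of the unique singular fibre of $\pi$, hence $\Aut_\R(X,\pi)$-equivariantly contractible. Thus only $|\Delta|=2$ and degree~$6$ remain, as in the paper.
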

\begin{proof}
Suppose $X$ has at least three singular fibres. The action of $\Aut_\R(X,\pi)$ on $\PP^1$ induces the exact sequence 
\[1\rightarrow\Aut_\R(X/\pi)\rightarrow\Aut_\R(X,\pi)\rightarrow H,\]
where $H\subset\Aut_\R(\PP^1)$ is the subgroup fixing the image of the set of the singular fibres. The conic bundle $X$ having three singular fibres implies that $H$ is finite. By Lemma~\ref{lem:CBF}, $\Aut_\R(X/\pi)$ is finite as well, so $\Aut_\R(X,\pi)$ is finite.\par
Suppose $X$ has one or two singular fibres. Let $\tilde{E}_n$ be the strict transform of the $-n$ curve of $\FF_n$. It is of self-intersection $-r\leq -n$. As the pair $(X,\Aut_\R(X,\pi))$ is minimal, the singular fibres intersect $\tilde{E}_n$, have exactly two components and there exists $g\in\Aut_\R(X,\pi)$ exchanging the components of each fibre. Then $s:=g(\tilde{E}_n)\neq E_n$ is a real section of self-intersection $-r$. Forgetting about about the action of $\Aut_\R(X,\pi)$, we contract in each fibre the component intersecting $s$. This is a blow-down $\eta'\colon X\rightarrow\FF_r$, and $r\leq\eta'(s)^2=-r+m$, where $1\leq m\leq 2$ is the number of points blown-up by $\eta'$. It follows that $m=2$ and $r\in\{0,1\}$. The case $r=0$ is not possible, so $r=1$. Hence $\eta'$ blows up two points in different fibres of $\FF_1$, and $X$ is a del Pezzo surface of degree $6$. Therefore, $\Aut_\R(X,\pi)\subset\Aut_\R(X)$. Figures~\ref{fig:X_4} and \ref{fig:X_3,T} and Propositions~\ref{prop:X_3,T} and \ref{prop:DP64} imply that the inclusion is strict.
\end{proof}

\section{The maximal infinite algebraic subgroups}
This section aims at proving Theorem~\ref{thm:classification} and Theorem~\ref{thm:parametrisation}. We first prove that any infinite algebraic subgroup of $\Bir_{\R}(\PP^2)$ is contained in one of the groups in Theorem~\ref{thm:classification}. We then have to prove that all listed groups are in fact maximal. 
                                                                                                   
\begin{Prop}\label{prop:among maximal}
Let $G\subset\Bir_{\R}(\PP^2)$ be an infinite algebraic subgroup. Then $G$ is conjugate to a subgroup of one of the groups in Theorem~$\ref{thm:classification}$.
\end{Prop}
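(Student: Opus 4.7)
The plan is to apply Proposition~\ref{prop which cases} to obtain a $G$-equivariant birational map $\PP^2 \dashrightarrow X$ to a real smooth projective $G$-surface $X$ falling into one of two cases: either $X$ is a del Pezzo surface of degree $6$, $8$ or $9$ with $\rk(\Pic(X)^G) = 1$; or $(X, \pi_X)$ is a real conic bundle with $G \subset \Aut_\R(X, \pi_X)$ and $\rk(\Pic(X)^G) = 2$, together with a birational morphism $\eta\colon X \to Y$ of real conic bundles where $Y \simeq X_{[2]}$ or $Y \simeq \FF_n$ (and the proof of Proposition~\ref{prop which cases} actually gives $n \neq 1$). Since this birational map is $G$-equivariant, it conjugates $G$ inside $\Bir_\R(\PP^2)$ to a subgroup of $\Aut_\R(X)$ (resp.\ $\Aut_\R(X, \pi_X)$), so it suffices to show that each possibility lies in one of the eight families of Theorem~\ref{thm:classification}.

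In the del Pezzo case, I would first dispatch the easy degrees. Degree $9$ gives $X \simeq \PP^2$ and $G \subset \mathrm{PGL}_3(\R)$, which is family~(\ref{thm:class 1}). For degree $8$, the two minimal real models are $\QQ_{3,1}$ (giving family~(\ref{thm:class 3})) and $\PP^1 \times \PP^1$: when $\rk(\Pic^G) = 1$ the subgroup $G$ must exchange the two rulings, which produces family~(\ref{thm:class 2.2}); the non-minimal degree $8$ case $\FF_1$ reduces to family~(\ref{thm:class 1}) by contracting the unique real $(-1)$-curve, which is $G$-invariant. For degree $6$, Lemma~\ref{lem:dP class} supplies four isomorphism classes: by Propositions~\ref{prop:X_2}\,(\ref{X_2 3}) and \ref{prop:DP63S}\,(\ref{DP63S 3}) the classes $X_{[2]}$ and $X_{[3,\QQ_{3,1}]}$ embed into $\Aut_\R(\QQ_{3,1})$ and so are absorbed into family~(\ref{thm:class 3}), whereas $X_{[3,\FF_0]}$ and $X_{[4]}$ give exactly families~(\ref{thm:class 4}) and (\ref{thm:class 5}) by Propositions~\ref{prop:X_3,T} and \ref{prop:DP64}.

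The conic bundle case I would organise according to the target $Y$ of $\eta$. If $Y \simeq X_{[2]}$ and $\eta$ is an isomorphism, then $X \simeq X_{[2]}$ again reduces to family~(\ref{thm:class 3}) as above. If $\eta$ is not an isomorphism, Lemma~\ref{lem:CBDP} together with $G$ being infinite forces $\eta$ to be the blow-up of $n \geq 1$ pairs of non-real conjugate points on $s \cup \bar{s}$ in non-real fibres, and Proposition~\ref{prop:CBX_2} puts $\Aut_\R(X, \pi)$ into family~(\ref{thm:class 7}). If $Y \simeq \FF_n$ with $\eta$ an isomorphism, then for $n \geq 2$ we are in family~(\ref{thm:class 2}); for $n = 0$ the inclusion $\mathrm{PGL}_2(\R)^2 \hookrightarrow (\mathrm{PGL}_2(\R)^2) \rtimes \langle \tau \rangle$ shows that $G$ is conjugate into family~(\ref{thm:class 2.2}). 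Finally, when $\eta \colon X \to \FF_n$ is not an isomorphism, Lemma~\ref{lem:CBF} gives two subcases. When the kernel subgroup of Lemma~\ref{lem:CBF} is non-trivial, one finds a real morphism $X \to \FF_N$ with $N \geq 1$ whose blow-up locus consists of $2N$ points in distinct fibres on $s_N$, so that Proposition~\ref{prop:CBF} places $G$ in family~(\ref{thm:class 6}) for $N \geq 2$, while for $N = 1$ the resulting $X$ is a real del Pezzo surface of degree $6$, reducing to an earlier case. When that kernel is trivial, Lemma~\ref{lem:finite} forces $\Aut_\R(X,\pi)$ to be finite (impossible, since $G$ is infinite) or strictly contained in the automorphism group of a real del Pezzo surface, again reducing to the del Pezzo case.

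The hard part is not a single calculation but the orderly bookkeeping of all these reductions: in particular, verifying that the non-minimal borderline cases ($\FF_1$, $\FF_0$, $N=1$, and the two degree~$6$ classes that fail to be $\Aut_\R$-minimal) all funnel into the right pre-existing family without producing a new one. This is precisely what the preparatory results Propositions~\ref{prop:X_2}, \ref{prop:DP63S}, \ref{prop:X_3,T}, \ref{prop:DP64}, \ref{prop:CBX_2}, \ref{prop:CBF} and Lemmas~\ref{lem:CBDP}, \ref{lem:CBF}, \ref{lem:finite} were designed to enable, so the argument is essentially a careful case analysis bootstrapped from them.
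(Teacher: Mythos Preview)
Your proposal is correct and follows essentially the same approach as the paper: reduce via Proposition~\ref{prop which cases} to the del Pezzo and conic bundle cases, then run through the degrees and targets using the preparatory Propositions~\ref{prop:X_2}--\ref{prop:DP64}, \ref{prop:CBX_2}, \ref{prop:CBF} and Lemmas~\ref{lem:CBDP}, \ref{lem:CBF}, \ref{lem:finite}. Your bookkeeping is in fact slightly more explicit than the paper's in one place (you separate out the case where $\eta\colon X\to X_{[2]}$ is an isomorphism, which the paper's proof glosses over), and your exclusion of $\FF_1$ via the proof of Proposition~\ref{prop which cases} is a harmless variant of the paper's direct handling of $\Aut_\R(\FF_1,\pr_1)\subsetneq\Aut_\R(\PP^2)$.
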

\begin{proof}
Proposition~\ref{prop which cases} states that for an infinite algebraic subgroup $G$ of $\Bir_{\R}(\PP^2)$ there exists a $G$-equivariant birational morphism $\PP^2\dashrightarrow X$ where $G$ acts on $X$ regularly and $X$ is one of the following:
\begin{enumerate}
\item $X$ is a Del Pezzo surface of degree 6, 8 or 9 such that $\rk(\Pic(X)^G)=1$.
\item $X$ admits a real conic bundle structure $\pi_X\colon X\rightarrow\PP^1$ with $\rk(\Pic(X)^G)=2$ and $G\subset\Aut_\R(X,\pi_X)$, and there is a birational morphism of conic bundles $\eta\colon X\to Y$, where $Y\simeq X_{[2]}$ is the sphere blown up in a pair of non-real conjugate points or $Y$ is a real Hirzebruch surface $Y=\FF_n$, $n\neq1$. 
\end{enumerate}
So $G$ is conjugate to a subgroup of $\Aut_\R(X)$ or $\Aut_\R(X,\pi)$, where $X$ is as in (1) or (2) respectively. The pairs $(X,\Aut_\R(X))$ and $(X,\Aut_\R(X,\pi))$ are described as follows. \par
In the first case, we get:
\begin{itemize}
\item If $\deg(X)=9$, then $X=\PP^2$ and $\Aut_\R(\PP^2)\simeq\mathrm{PGL}_3(\R)$.
\item If $\deg(X)=8$, then
$X=\FF_1$ and $\Aut_\R(X)$ is conjugate to a subgroup of $\mathrm{PGL}_3(\R)$, or $X$ is one of the following two by \cite{Com12}:\par
 $X= \mathcal{Q}_{3,1}$ and $\Aut_\R(\mathcal{Q}_{3,1})\simeq\mathbb{P}\mathrm{O}_\R(3,1)$. \par
 $X=\FF_0$ and $\Aut_\R(\FF_0)=\mathrm{PGL}_2(\R)^2\rtimes\langle\tau'\rangle$, where $\tau':(x,y)\mapsto(y,x)$.
\item If $\deg(X)=6$, then Lemma~\ref{lem:dP class} and Propositions~\ref{prop:X_2}, \ref{prop:DP63S}, \ref{prop:X_3,T} and \ref{prop:DP64} imply that \par
$X=X_{[2]}$ and $\Aut_\R(X_{[2]})$ is conjugate to a subgroup of $\Aut_\R(\mathcal{Q}_{3,1})$ (Proposition~\ref{prop:X_2}).\par
$X=X_{[3,\mathcal{Q}_{3,1}]}$ and $\Aut_\R(X_{[3,\mathcal{Q}_{3,1}]})$ is conjugate to a subgroup of $\Aut_\R(\mathcal{Q}_{3,1})$ (Proposition~\ref{prop:DP63S}),\par
$X=X_{[3,\FF_0]}$ and by Proposition~\ref{prop:X_3,T} states the action of $\Aut_\R(X)$ on $\mathrm{Pic}(X)$ induces the split exact sequence 
\[1\rightarrow\mathrm{SO}_2(\R)\times\mathrm{SO}_2(\R)\rightarrow\Aut_\R(X_{[3,\FF_0]})\rightarrow D_6\rightarrow1,\] 
or $X= X_{[4]}$ and Proposition~\ref{prop:DP64} states that the action of $\Aut_\R(X)$ induces the split exact sequence 
\[1\rightarrow(\R^*)^2 \Aut_\R(X_{[4]})\rightarrow D_6\rightarrow1.\]
\end{itemize}
In the second case, we look up the results of Section~\ref{sec:CB}:
\begin{itemize}
\item Lemma~\ref{lem:CBDP} implies that $\eta\colon X\rightarrow Y\simeq X_{[2]}$ is the blow-up of $n\geq1$ pairs of non-real conjugate points in the exceptional divisors of $X_{[2]}\rightarrow\mathcal{Q}_{3,1}$ and contained in non-real fibres, and $\pi=\pi_{[2]}\eta\colon X\rightarrow \PP^1$ is relatively $\Aut_\R(X,\pi)$-minimal. Proposition~\ref{prop:CBX_2} implies that the action of $\Aut_\R(X)$ on $\PP^1$ induces the split exact sequence 
\[1\rightarrow\mathrm{SO}_2(\R)\rtimes\Z/2\Z\rightarrow\Aut_\R(X)\rightarrow\rtimes H_{\Delta}\rightarrow,\] 
where $H_\Delta\subset\mathrm{PGL}_2(\R)$ is the subgroup preserving the image in $\PP^1$ of the $2n+2$ singular fibres of $X$ and the interval $\pi(X(\R))=\pi_{[2]}(X_{[2]}(\R))=[0,\infty]$.
\item If $\eta\colon X\rightarrow Y=\FF_n$, then the following possibilities occur:\par
$-$ If $\eta$ is an isomorphism, then $X=\FF_n$ and 
\begin{align*}
&\Aut_\R(\FF_0,\pr_0)\subsetneq\Aut_\R(\FF_0)\\
&\Aut_\R(\FF_1,\pr_1)\subsetneq\Aut_\R(\PP^2)\quad \text{(see beginning of Section~\ref{ssec:CB F})}\\
&\Aut_\R(\FF_n,\pr_n)=\Aut_\R(\FF_n)\simeq\R^{n+1}\rtimes(\mathrm{GL}_2(\R)/\{\mu\Id\mid \mu^n=1\}),\ n\geq2
\end{align*}
\indent $-$ Else, $\Aut_\R(X,\pi)$ being infinite and maximal, Lemma~\ref{lem:CBF} and Lemma~\ref{lem:finite} imply that there exists a birational morphism $\eta'\colon X\rightarrow\FF_N$ that is the blow-up of $2N$ points on $s_N$ (see definition in beginning of Section~\ref{ssec:CB F}). \par
If $N=0$, then $\eta'\colon Y\rightarrow\FF_0$ is an isomorphism, and we have already listed this case.\par
If $N=1$, then $Y$ is a del Pezzo surface of degree $6$; if $\eta'$ blows up two real points, then $	Y\simeq X_{[4]}$ and if $\eta'$ blows up a pair of non-real conjugate points, then $Y\simeq X_{[3,\FF_0]}$ (see Section~\ref{sec:DP}). In either case, $\Aut_\R(Y,\pi)\subseteq\Aut_\R(Y)$. \par
If $N\geq2$, then Proposition~\ref{prop:CBF} states that $\pi=\pi_N\eta\colon Y\rightarrow\PP^1$ is relatively $\Aut_\R(Y,\pi)$-minimal and the action of $\Aut_\R(X)$ on $\PP^1$ induces the split exact sequence
\[1\rightarrow\R^*/\mu_N\rtimes\Z/2\Z\rightarrow\Aut_\R(Y)\rightarrow H_{\Delta}\rightarrow1,\] 
where $H_{\Delta}\subset\mathrm{PGL}_2(\R)$ is the subgroup fixing the image in $\PP^1$ of the $2N$ points and $\mu_N\subset\R^*$ the group of $N$-th roots of unity.
\end{itemize}
\end{proof}

\begin{Prop}\label{prop:maximal}
The groups in Theorem~$\ref{thm:classification}$ are maximal algebraic subgroups of $\Bir_{\R}(\PP^2)$ and the classes are pairwise non-conjugate.
\end{Prop}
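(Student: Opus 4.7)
The plan proceeds in two stages: maximality first, then pairwise non-conjugacy. Both rely on Proposition~\ref{prop:among maximal} and the structural results from Sections~\ref{sec:DP} and \ref{sec:CB}. For maximality, fix a pair $(X, G)$ from Theorem~\ref{thm:classification} and suppose $G \subseteq H$ for some infinite algebraic subgroup $H$ of $\Bir_\R(\PP^2)$. By Proposition~\ref{prop:among maximal} applied to $H$, after conjugation there is a pair $(X', G')$ from the list with $H \subseteq G'$, together with an $H$-equivariant (hence $G$-equivariant) birational map $\varphi\colon X \dashrightarrow X'$ coming from the proof of Proposition~\ref{prop which cases}. The goal will be to show that $\varphi$ is in fact an isomorphism of the relevant structure, forcing $X' \simeq X$ and $H \subseteq \Aut_\R(X) = G$ (or $H \subseteq \Aut_\R(X,\pi) = G$ in the conic bundle case).

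In the del Pezzo families (\ref{thm:class 1})--(\ref{thm:class 5}), the pair $(X, G)$ is $G$-minimal: either because $\rk(\Pic(X)) = 1$ in families (\ref{thm:class 1})--(\ref{thm:class 2.2}), or by Propositions~\ref{prop:X_3,T}(\ref{X_3, T 4}) and \ref{prop:DP64} in families (\ref{thm:class 4}) and (\ref{thm:class 5}). The target $(X', G')$ is $G'$-minimal by construction. Hence $\varphi$ will be an isomorphism between $G$-minimal pairs after running the $G$-equivariant MMP on both sides, and $G' = \Aut_\R(X') = \Aut_\R(X) = G$. In the conic bundle families (\ref{thm:class 2})--(\ref{thm:class 6}), I first rule out that $X'$ is one of the del Pezzo surfaces in the list: the rank $\rk(\Pic(X)^G) = 2$ propagates across the $G$-equivariant map $\varphi$, while a del Pezzo $X'$ from the list with $G \subseteq \Aut_\R(X')$ infinite would force $\rk(\Pic(X')^G) = 1$. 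Hence $X'$ is a conic bundle and $\varphi$ intertwines the conic bundle structures; then the uniqueness of the conic bundle on $\FF_n$ in family (\ref{thm:class 2}) together with the relative $\Aut_\R(X,\pi)$-minimality established in Propositions~\ref{prop:CBX_2} and \ref{prop:CBF} forces $\varphi$ to be an isomorphism of conic bundles, yielding $H = G$.

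For non-conjugacy, a $\Bir_\R(\PP^2)$-conjugation between pairs $(X, G_X)$ and $(Y, G_Y)$ in the list would yield a $G_X \simeq G_Y$-equivariant birational map $X \dashrightarrow Y$, so I distinguish the families by $\Bir_\R(\PP^2)$-conjugation invariants:
\begin{itemize}
\item The dimension of $G$ separates most families outright, and within family (\ref{thm:class 2}) it separates the values of $n$ (since $\dim G = n+5$).
\item The identity component $G^0$ resolves remaining ambiguities: for (\ref{thm:class 2.2}), $G^0 \simeq \mathrm{PGL}_2(\R)^2$ is a product of two simple factors, whereas for (\ref{thm:class 3}), $G^0 \simeq \mathbb{P}\mathrm{O}_\R(3,1)^0$ is simple; for (\ref{thm:class 4}), $G^0 \simeq \mathrm{SO}_2(\R)^2$ is compact, while for (\ref{thm:class 5}), $G^0 \simeq (\R^*)^2$ is a non-compact split torus.
\item For family (\ref{thm:class 2}), the integer $n$ is moreover recoverable as the self-intersection of the unique $G$-invariant $(-n)$-curve on the $G$-minimal model.
\item For families (\ref{thm:class 7}) and (\ref{thm:class 6}), two configurations of points on $\PP^1$ give conjugate groups if and only if they differ by an element of the stated subgroup of $\mathrm{PGL}_2(\R)$: for (\ref{thm:class 7}) the interval $\pi(X(\R)) = [0,\infty]$ (see Figure~\ref{fig:CBX_2}) is an invariant of the pair, so only transformations preserving $[0, \infty]$ are permissible; for (\ref{thm:class 6}) the set $X(\R)$ surjects onto $\PP^1(\R)$ and all of $\mathrm{PGL}_2(\R)$ is admissible.
\end{itemize}

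The main obstacle will be executing the $G$-equivariant MMP step in the maximality argument: ensuring that $\varphi$ (or its inverse) can be upgraded to an actual $G$-equivariant \emph{morphism} of the appropriate kind before the minimality of either side is invoked. Once this reduction is in place, the existence of a $G$-equivariantly contractible configuration of $(-1)$-curves or components of singular fibres is easily ruled out from the explicit descriptions in Sections~\ref{sec:DP} and \ref{sec:CB}, producing the required contradiction and completing the proof.
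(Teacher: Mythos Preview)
Your outline has a genuine gap in the maximality argument, and you have in fact already put your finger on it in your final paragraph: the passage from a $G$-equivariant birational map $\varphi\colon X\dashrightarrow X'$ between two $G$-minimal (or relatively $G$-minimal) surfaces to a $G$-equivariant \emph{isomorphism} is the entire content of the proposition, and nothing in your sketch supplies it. Two $G$-minimal surfaces connected by a $G$-equivariant birational map need not be $G$-isomorphic a priori; this is exactly what the Sarkisov program is for. Relatedly, the assertion that ``$\rk(\Pic(X)^G)=2$ propagates across the $G$-equivariant map $\varphi$'' is false: a $G$-equivariant link of type~I takes a del Pezzo surface with $\rk(\Pic(X)^G)=1$ to a conic bundle with $\rk(\Pic(X')^G)=2$, so the invariant part of the Picard group is not a $G$-birational invariant and cannot be used to separate the del Pezzo and conic bundle cases in the way you propose.

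The paper closes this gap by invoking Iskovskikh's factorisation theorem \cite[Theorem~2.5]{Isk96}: any $G$-equivariant birational map $f\colon X\dashrightarrow Y$ between surfaces in the list decomposes into $G$-equivariant elementary links of types I--IV, and one then checks, using \cite[Theorem~2.6]{Isk96} together with the explicit orbit data from Propositions~\ref{prop:X_3,T}, \ref{prop:DP64}, \ref{prop:CBX_2} and \ref{prop:CBF}, that no such link can start at any $X$ in the list. For the del Pezzo cases the obstruction is that $G$ has no finite orbit of the required cardinality; for the conic bundle cases it is that every finite $G$-orbit meets some fibre in at least two points (because the involution in $\Aut_\R(X/\pi)$ swaps the two distinguished sections), which is forbidden for a type~II link. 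This link-by-link analysis simultaneously yields maximality and pairwise non-conjugacy, making your separate invariant-based non-conjugacy argument unnecessary---though that part of your plan is essentially sound and would work as an independent route to non-conjugacy once maximality is in hand, provided you also distinguish families~(\ref{thm:class 7}) and~(\ref{thm:class 6}) (both one-dimensional) by the compactness of the identity component, and check the remaining dimension coincidences (e.g.\ $\dim\Aut_\R(\FF_3)=8=\dim\mathrm{PGL}_3(\R)$) via the group structure.
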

\begin{proof}
Let $(X,G)$ be a pair in the list of Theorem~\ref{thm:classification}. To prove that the group $G$ is a maximal algebraic subgroup of $\Bir_{\R}(\PP^2)$, we have to check because of Proposition~\ref{prop:among maximal} that any $G$-equivariant birational map $f\colon X\dashrightarrow Y$, where $Y$ is one of the surfaces listed in Theorem~\ref{thm:classification}, is in fact a $G$-equivariant isomorphism. This will also prove that all the classes are distinct.\par
As $X$ and $Y$ are del Pezzo surfaces or conic bundles, \cite[Theorem 2.5]{Isk96} (and \cite[Appendix]{Cor95}) implies that any birational map $X\dashrightarrow Y$ is an isomorphism or the composition of elementary links, which are divided into type I--IV, shown by the commutative diagrams below, where the horizontal maps are blow-ups defined over $\R$, and $T\in\{\PP^1,\ast\}$. 
\begin{center}
\noindent\begin{minipage}[h]{0.23\textwidth}
\centering
\hspace{0.5em}\xymatrix{X\ar[d]&X_1\ar[l]\ar[d]\\ \ast&\PP^1}
\captionof*{figure}{Type I}
\end{minipage}
\begin{minipage}[h]{.23\textwidth}
\centering
\xymatrix{ X\ar[d]&Z\ar[r]^{\eta_2}\ar[l]_{\eta_1}&X_1\ar[d] \\ T\ar[rr]^{\simeq}&&T} 
\captionof*{figure}{Type II}
\end{minipage}
\begin{minipage}[h]{0.24\textwidth}
\centering\hspace{0.5em}\xymatrix{X\ar[r]\ar[d]&X_1\ar[d]\\ \PP^1&\ast}
\captionof*{figure}{Type III}
\end{minipage}
\begin{minipage}[h]{0.24\textwidth}
\centering\hspace{0.5em}\xymatrix{X\ar[r]^{\simeq}\ar[d]&X_1\ar[d]\\ \PP^1&\PP^1}
\captionof*{figure}{Type IV}
\end{minipage}
\end{center}

The decomposition into links can be made $G$-equivariant because $G$ is a linear algebraic group (Lemma~\ref{lem aut}) and its action on the Picard group is finite by Lemma~\ref{lem:lin alg}. The horizontal maps of the $G$-equivariant links blow up the finite $G$-orbit of a real point or the finite $G$-orbit of a pair of non-real points. \par
Suppose that $f$ is not an isomorphism and let $f=\Phi_n\cdots\Phi_1$ be its decomposition into $G$-equivariant links of type I--IV. Then:  \par
$\bullet$ If $\Phi_1$ is a link of type I, then \cite[Theorem 2.6]{Isk96} implies that $X$ is a del Pezzo surface of degree $9,8$ or $4$. The latter case does not appear, and $X=\PP^2$ or $X=\mathcal{Q}_{3,1}$ or $X=\FF_0$ by \cite{Com12}. However, there are no finite $G$-orbits on $X$, which makes such a link impossible. \par
$\bullet$ If $\Phi_1$ is a link of type III, then \cite[Theorem 2.6]{Isk96} implies that $X\simeq\FF_1$, $X\simeq X_{[3,S]}$ or $X\simeq X_{[2]}$ or $X\simeq X_{[4]}$. Only the latter is in our list. \cite[Theorem 2.6]{Isk96} says that $\Phi_1$ must be the contraction of an orbit of order $2$, which does not exist by Proposition~\ref{prop:DP64}.\par
$\bullet$ If $\Phi_1=\eta_2\eta_1^{-1}$ is a link of type II, then either $X,X_1$ are both del Pezzo surfaces and $T=\ast$ or $X,X_1$ are both conic bundles and $T=\PP^1$. We look at these cases separately:\par
If $X$ and $X_1$ are del Pezzo surfaces, then \cite[Theorem 2.6]{Isk96} implies that the degree of $X$ is $9,8,6,5,4,3$ or $2$. Only the first three degrees appear in our list. If $X$ has degree $9$ or $8$, then, again, there are no finite $G$-orbits on $X$, so a link of type II is not possible. Suppose that $X$ is of degree $6$, i.e. $X\simeq X_{[3,\FF_0]}$ or $X\simeq X_{[4]}$ in Lemma~\ref{lem:dP class}. By \cite[Theorem 2.6]{Isk96}, $\eta_1$ is the blow-up of at most $5$ points on $X$. That is impossible because the only finite $G$-orbit on $X$ has cardinality six by Propositions~\ref{prop:X_3,T} and \ref{prop:DP64}.\par
Suppose that $X$ and $X_1$ are real conic bundles. If $X=\FF_n$ for some $n\in\N$, then $G$ does not have a finite orbit and hence a link of type II cannot start with $\FF_n$.\par
If there exists a birational morphism $X\rightarrow X_{[2]}$ of real conic bundles, then Proposition~\ref{prop:CBX_2} states that $\Aut_\R(X/\pi)$ contains an element exchanging the two unique $(-(n+1))$-sections of $X$. If there exists a birational morphism $X\rightarrow\FF_n$ of conic bundles that is not an isomorphism, then Proposition~\ref{prop:CBF} states that $\Aut_\R(X/\pi)$ contains an element exchanging the two $(-n)$-sections of $X$. In either case, a $G$-orbit on $X$ contains at least two points that are contained in the same fiber, which is not allowed. So there is no link of type II starting with $X$.\par
$\bullet$ If $\Phi_1$ is a link of type IV, then \cite[Theorem 2.6]{Isk96} implies that $K_X^2=8,4,2,1$. So, consulting our list, $K_X^2=8$ and, again by \cite[Theorem 2.6]{Isk96}, $X=X_1=\FF_0$ and $\Phi_1$ exchanges the two fibrations. In particular, $\Phi_1\in\Aut_\R(\FF_0)$.\par
Summarised, one cannot find a decomposition of $f$ into elementary links and hence $f$ is an isomorphism.
\end{proof}

\begin{proof}[Proof of Theorem~$\ref{thm:classification}$]
By Proposition~\ref{prop:among maximal}, any algebraic subgroup of $\Bir_{\R}(\PP^2)$ is conjugate to a subgroup of one of the groups in the list. By Proposition~\ref{prop:maximal}, all  of these groups are maximal and pairwise non-conjugate.
\end{proof}

\begin{proof}[Proof of Theorem~$\ref{thm:parametrisation}$]
The claim is clear for families (\ref{thm:class 1})-(\ref{thm:class 2.2}) and (\ref{thm:class 2}). The rest of the claim follows from the description of $\Aut_\R(X)$ in Propositions~\ref{prop:X_3,T} and \ref{prop:DP64} and of $\Aut_\R(X,\pi)$ in Propositions~\ref{prop:CBX_2} and \ref{prop:CBF}.
\end{proof}

\begin{center}$\ast\ast\ast$\end{center}
\vskip\baselineskip

Let us take a look at which infinite algebraic subgroups survive the abelianisation of $\Bir_{\R}(\PP^2)$.

\begin{Rmk}\label{rmk:same fibre}\item
(1) The construction of the abelianisation $\varphi\colon\Bir_{\R}(\PP^2)\rightarrow\bigoplus_\R\Z/2\Z$ in \cite[Definition 3.10, Proposition 4.3]{Z15} yields the following:
Let $g_1$ and $g_2$ be elementary links of the real conic bundle $\pi_{[2]}\colon X_{[2]}\rightarrow\PP^1$ contracting pairs of non-real cojugate fibres $f_1,\bar{f}_1$ and $f_2,\bar{f}_2$ respectively. Then $g_1$ and $g_2$ have the same image in the quotient if and only if 
\[\pi_{[2]}(f_1)\in\ \R_{>0}\cdot\pi_{[2]}(f_2)\ \cup\ \R_{>0}\cdot\pi_{[2]}(\bar{f}_2)\quad\text{in}\ \PP^1.\] 

(2) Let $\pi\colon X\stackrel{\eta}\rightarrow X_{[2]}\stackrel{\pi_{[2]}}\rightarrow\PP^1$ be a surface as in Theorem~$\ref{thm:classification}~(\ref{thm:class 7})$. We can see elements of $\Aut_\R(X,\pi)$ as birational transformations of $X_{[2]}$ preserving the conic bundle structure. \par
Let $(q_1,\bar{q}_1),\dots,(q_n,\bar{q}_n)$ be the pairs of non-real conjugate points blown up by $\eta$. Then, by definition of $\varphi$ and Proposition~\ref{prop:CBX_2}, we have
\[\varphi(g)=\sum_{i=1}^ne_{\nu(\pi(q_i))}\ \text{if}\ g\in\Aut_\R(X,\pi)\setminus\Aut_\R(X/\pi),\qquad\varphi(g)=0\ \text{if}\ g\in\Aut_\R(X/\pi)\]
where $\nu([a+{\bf i}b:1])=\nu([a-{\bf i}b:1])=\frac{a}{|b|}$ and $e_r$ is the ``standard vector" with entry $1$ at $r$ and zero everywhere else. 
\end{Rmk}

\begin{proof}[Proof of Theorem~$\ref{thm:quotient}$]
It suffices to check that the maximal infinite subgroups of $\Bir_{\R}(\PP^2)$ have trivial image in the quotient $\Bir_{\R}(\PP^2)/\langle\langle\Aut_\R(\PP^2)\rangle\rangle\simeq\bigoplus_\R\Z/2\Z$. The quotient is abelian, so it suffices to check the groups listed in Theorem~\ref{thm:classification}. \par 
Groups (\ref{thm:class 1})--(\ref{thm:class 2.2}) have trivial image because their elements are conjugate to transformations of $\PP^2$ of degree at most $2$. \par
The generators of group (\ref{thm:class 4}) either descend to automorphisms of $\FF_0$ or to birational transformations of $\FF_0$ sending one fibration onto the other (Proposition~\ref{prop:X_3,T}). The latter are conjugate to transformations of $\PP^2$ sending a pencil of lines through a real point onto the pencil of lines through another real point. So, also the generators of (\ref{thm:class 4}) have trivial image in the quotient. \par
The generators of the group (\ref{thm:class 5}) are conjugate to transformations of $\PP^2$ of degree at most $2$ by Proposition~\ref{prop:DP64}, so they are contained in $\langle\langle\Aut_\R(\PP^2)\rangle\rangle$. \par
The groups in families (\ref{thm:class 2}) and (\ref{thm:class 6}) are conjugate to transformations of $\PP^2$ preserving a pencil of lines through a point. So, they have trivial image in the quotient.\par 
By Remark~\ref{rmk:same fibre} there exist real conic bundles $\pi\colon X\stackrel{\eta}\rightarrow X_{[2]}\stackrel{\pi_{[2]}}\rightarrow\PP^1$ as in family (\ref{thm:class 7}) that have non-trivial image in $\bigoplus_\R\Z/2\Z$. The image is finite and they are mapped onto the generator $e_r$ if and only if $\eta$ blows up exactly one pair of non-real conjugate points $q,\bar{q}$ such that $\nu(\pi(p))=r$. 
\end{proof}

\begin{proof}[Proof of Corollary~$\ref{cor:quotient}$]
This is a direct consequence of the fact that the images of the algebraic subgroups are finite by Theorem~\ref{thm:quotient}.
\end{proof}



\begin{thebibliography}{ABCDEF}

\bibitem[BH2007]{BH07}{\sc I. Biswas, J. Huisman:} {\em Rational real algebraic models of topological surfaces.} Doc. Math. {\bf12} (2007), 549--567.

\bibitem[Bla2009]{B10}{\sc J. Blanc:} {\em Sous-groupes algébriques du groupe de Cremona.} Transformation groups, Vol. {\bf14}, no 2, 2009, 249--285. 

\bibitem[BF2013]{BF13}{\sc J. Blanc, J.-P. Furter:} {\em Topologies and structures on the Cremona groups.} Ann. of Math. {\bf178} (2013), no. 3, 1173--1198.

\bibitem[BM2014]{BM14}{\sc J. Blanc, F. Mangolte:} {\em Cremona groups of real surfaces}, Automorphisms in birational and affine geometry, Springer Proc. Math. Stat. {\bf79}, 2014, pp. 35--58.

\bibitem[Bri2014]{Bri14}{\sc M. Brion:} {\em On actions of connected algebraic groups}, arXiv: 1412.1906v1.

\bibitem[Cas1901]{Cas01}{\sc G. Castelnuovo:} {\em: Le trasformazioni generatrici del gruppo cremoniano nel piano.} Atti della R. Accad. delle Scienze di Torino, 36:861-874, 1901.

\bibitem[Com1912]{Com12}{\sc A. Comessatti:} {\em Fondamenti per la geometria sopra le superficie razionali dal punto di vista reale.} Math. Ann. {\bf73} (1912), 290--302.

\bibitem[Cor1995]{Cor95}{\sc A. Corti:} {\em Factoring birational maps of threefolds after Sarkisov}, J. Alg. Geom. {\bf4} (1995), 223--254.

\bibitem[Dem1970]{Dem70}{\sc M. Demazure:} {\em Sous-groupes alg\'ebriques de rang maximum du groupe de Cremona}, Ann. Sci. \'Ecole Norm. Sup. (4) {\bf3} (1970), 507--588.

\bibitem[DI2009]{DI09}{\sc I. Dolgachev, V. Iskovskikh:} {\em Finite subgroups of the plane Cremona group.}  Algebra, arithmetic, and geometry: in honor of Yu. I. Manin. Vol. I, 443–548, Progr. Math. {\bf269}, Birkh\"auser Boston, Inc., Boston, MA, 2009 pp. 443--548.

\bibitem[HM2009]{HM09}{\sc J. Juisman, F. Mangolte:} {\em The group of automorphisms of a real rational surface is n-transitive.} Bull. Lond. Math. Soc. 41 (2009), 563--568.

\bibitem[Isk1996]{Isk96}{\sc V. A. Iskovskikh:} {\em Factorization of birational mappings of rational surfaces from the viewpoint of Mori theory}, Uspekhi Mat. Nauk {\bf 51}, no. 4, 3--72. English translation: Russian Math. Surveys {\bf51} (1996), no.4, 585--652. 

\bibitem[KM2008]{KM08}{\sc J. Koll\'ar, S. Mori:} {\em Birational geometry of algebraic varieties}, with the collaboration of C. H. Clemens and A. Corti. Translated from the 1998 Japanese original. Cambridge Tracts in Mathematics, 134. Cambridge University Press, Cambridge, 1998. viii+254 pp.

\bibitem[KM2009]{KM09}{\sc J. Koll\'ar, F. Mangolte:} {\em Cremona transformations and diffeomorphisms of surfaces.} Adv. Math. 222, 44--61 (2009).

\bibitem[Rob2016]{R15}{\sc M. F. Robayo:} {\em Prime order birational diffeomorphisms of the sphere}, Annali Sc. Norm. Super. Pisa, Cl. Sci. (5) Vol. XVI (2016), 909--970.

\bibitem[Ros1956]{Ros56}{\sc M. Rosenlicht:} {\em Some basic theorems on algebraic groups}, Amer. J. Math. {\bf78} (1956), 401--443.

\bibitem[RV2005]{RV05}{\sc F. Ronga, T. Vust:} {\em Diffeomorfismi birazionali del piano proiettivo reale.} Comm. Math. Helv. {\bf 80} (2005), 517--540.

\bibitem[Ser2010]{Ser10}{\sc J.-P. Serre:} {\em Le group de Cremona et ses sous-groupes finis}, S\'eminaire Bourbaki. Volume 2008/2009. Ast\'erisque No. {\bf332} (2010), Exp. No. 1000, vii, 75--100.

\bibitem[Sum1975]{Sum75}{\sc H. Sumihiro:} {\em Equivariant completion II}, J. Math. Kyoto Univ. (JMKYAZ), 1--3 (1975), 573--605.

\bibitem[Wei1955]{W55}{\sc A. Weil:} {\em On Algebraic Groups of Transformations},  Amer. J. Math, vol. 77, no. {\bf 2} (1955), 355--391.

\bibitem[Yas2015]{Yas16}{\sc E. Yasinsky:} {\em Subgroups of odd order in the real plane Cremona group}, J. of Algebra 461 (2016), 87--120. 

\bibitem[Zar1939]{Z39}{\sc O. Zariski:} {\em The Reduction of the Singularities of an Algebraic Surface}, Ann. of Math. Second Series, Vol. {\bf 40}, No. 3 (1939), 639--689.

\bibitem[Zim2015]{Z15}{\sc S. Zimmermann:} {\em The abelianisation of the real Cremona group}, arXiv:1510.08705.



\end{thebibliography}
\end{document}